\newtheorem{theorem}{Theorem}[section]
\newtheorem{lemma}[theorem]{Lemma}
\newtheorem{corollary}[theorem]{Corollary}
\theoremstyle{definition}
\newtheorem{definition}[theorem]{Definition}
\newtheorem{remark}[theorem]{Remark}
\newtheorem{conjecture}[theorem]{Conjecture}
\numberwithin{equation}{section}
\newcommand{\CC}{\mathbb C}
\newcommand{\NN}{\mathbb N}
\newcommand{\PP}{\mathbb P}
\newcommand{\ZZ}{\mathbb Z}
\newcommand{\cD}{\mathcal D}
\newcommand{\SO}{\mathop{\mathrm {SO}}\nolimits}
\newcommand{\Sp}{\mathop{\mathrm {Sp}}\nolimits}
\newcommand{\Orth}{\mathop{\null\mathrm {O}}\nolimits}
\newcommand{\Proj}{\operatorname{Proj}}
\newcommand{\ord}{\operatorname{ord}}
\newenvironment{psmallmatrix}
  {\left(\begin{smallmatrix}}
  {\end{smallmatrix}\right)}
\begin{document}

\title[Projective spaces as orthogonal modular varieties]{Projective spaces as orthogonal modular varieties}

\author{Haowu Wang}

\address{Max-Planck-Institut f\"{u}r Mathematik, Vivatsgasse 7, 53111 Bonn, Germany}

\email{haowu.wangmath@gmail.com}

\author{Brandon Williams}

\address{Fachbereich Mathematik, Technische Universit\"{a}t Darmstadt, 64289 Darmstadt, Germany}

\email{bwilliams@mathematik.tu-darmstadt.de}

\subjclass[2010]{11F55, 51F15, 32N15}

\date{\today}

\keywords{Symmetric domains of type IV, modular forms on orthogonal groups, projective spaces, reflection groups, Borcherds products}

\begin{abstract}
We construct $16$ reflection groups $\Gamma$ acting on symmetric domains $\cD$ of Cartan type IV, for which the graded algebras of modular forms are freely generated by forms of the same weight, and in particular the Satake--Baily--Borel compactification of $\cD / \Gamma$ is isomorphic to a projective space. Four of these are previously known results of Freitag--Salvati Manni, Matsumoto, Perna and Runge. In addition we find several new modular groups of orthogonal type whose algebras of modular forms are freely generated.
\end{abstract}

\maketitle

\section{Introduction}
In this paper we realize the projective spaces $\PP^3(\CC)$ and $\PP^4(\CC)$ in several ways as orthogonal modular varieties using the theory of Borcherds products. Let $\cD_n$ be a type IV Hermitian symmetric domain of dimension $n$ with $n\geq 3$, i.e. $\cD_n\cong \Orth^+_{n,2}/(\SO_n\times \Orth_2)$ where $\Orth^+_{n,2}$ is the orthogonal group that preserves $\cD_n$, and let $\Gamma$ be an arithmetic subgroup of $\Orth^+_{n,2}$. Then $\Gamma$ acts properly discontinuously on $\cD_n$ and the quotient $\cD_n/\Gamma$ is a quasi-projective variety of dimension $n$. Orthogonal modular forms, i.e. automorphic forms on $\cD_n$ for $\Gamma$, are a powerful tool in the study of these varieties.  An orthogonal modular form of weight $k$ is a holomorphic function on the affine cone over $\cD_n$ which has homogenous degree $-k$ and is invariant under $\Gamma$. Orthogonal modular forms of all weights for $\Gamma$ form a graded algebra $M_*(\Gamma)$. By \cite{BB66} the graded algebra $M_*(\Gamma)$ is finitely generated over $\CC$ and the Satake-Baily-Borel compactification  $(\cD_n/\Gamma)^*$ of the modular variety $\cD / \Gamma$ is a projective variety isomorphic to $\Proj(M_*(\Gamma))$. In particular, if $M_*(\Gamma)$ is freely generated by $n+1$ forms of weights $k_1$, $k_2$,..., $k_{n+1}$, then $(\cD_n/\Gamma)^*=\Proj(M_*(\Gamma))$ is a weighted projective space with weights $k_1$, $k_2$,..., $k_{n+1}$.

Free algebras of modular forms are rare. Many of the known examples are related to irreducible root systems as in \cite{WW20}. It is known that the group $\Gamma$ must be generated by reflections if $M_*(\Gamma)$ is free (see \cite{VP89}). (This immediately rules out any modular groups $\Gamma$ except subgroups of $\mathrm{U}(n, 1)$ and of $\Orth(n,2)$. In this paper $\Gamma$ is always an arithmetic subgroup of $\Orth(n, 2)$.)  The first named author found a necessary and sufficient condition for $M_*(\Gamma)$ to be free in \cite{Wan20} and used it to construct 16 new free algebras of modular forms in \cite{Wan20b}. This condition is essentially the existence of a distinguished modular form which vanishes precisely on all mirrors of reflections in $\Gamma$ with multiplicity one and equals the Jacobian of the $n+1$ generators. In this paper, we use that criterion to find reflection groups $\Gamma$ such that $M_*(\Gamma)$ is freely generated by some forms of the same weight. For any such $\Gamma$ the Satake-Baily-Borel compactification of $\cD_n/\Gamma$ is a projective space. 

Modular varieties isomorphic to projective spaces are very exceptional. The authors are aware of only four examples of dimension $n \ge 3$ in the literature. The first was found by Runge in 1993 \cite{Run93}. Runge's theorem states that the algebra of Siegel modular forms of genus $2$ on the level $4$ subgroup $\Gamma_2[2,4]$ is the polynomial algebra in $4$ theta constants of second order, which implies that the corresponding modular variety is isomorphic to $\PP^3(\CC)$. The second example was found by Matsumoto \cite{Mat93} in the same year. Matsumoto proved that the algebra of symmetric Hermitian modular forms of degree 2 over the Gaussian numbers for the principal congruence group of level $1 + i$ is freely generated by five forms of weight 2, which implies that the corresponding modular variety is isomorphic to $\PP^4(\CC)$. A different proof of this was given by Hermann \cite{Her95}.
The third example was constructed by Freitag and Salvati Manni in 2006. They proved in \citep{FM06} that the algebra of symmetric Hermitian modular forms of degree 2 over the Eisenstein integers for the congruence group of level $\sqrt{-3}$ is freely generated by five forms of weight 1 and therefore the modular variety is also isomorphic to $\PP^4(\CC)$. Finally, Perna \cite{Per16} proved that the algebra of Siegel modular forms for a certain congruence subgroup containing $\Gamma_2[2,4]$ is freely generated by the squares of $4$ theta constants of second order.

We will give a simple and largely uniform proof of the above results in the context of orthogonal modular forms. Runge's and Perna's theorems can be interpreted in terms of orthogonal modular forms for the lattice $2U(4)\oplus A_1$. (Here and below, $U$ is an even unimodular lattice of signature $(1,1)$, and $A_1, A_2$ denote the lattice generated by the root system of the same name.) Matsumoto's theorem is treated using the lattice model $2U(2)\oplus 2A_1$, and for this lattice we also find two new free algebra of modular forms for smaller groups. The first of these is freely generated by five forms of weight $1$ and the second is freely generated by four forms of weight $2$ and one form of weight $1$, such that the modular varieties associated to both of these groups are isomorphic to $\PP^4(\CC)$. The theorem of Freitag--Salvati Manni corresponds to the lattice $2U(3)\oplus A_2$, and we also find two new free algebras of modular forms for larger groups related to this lattice. The first of these is freely generated by five forms of weight $2$ and the second is freely generated by four forms of weight $2$ and one form of weight $1$. The modular varieties associated to the two groups are then isomorphic to $\PP^4(\CC)$. In addition, we determine eight new reflection groups whose associated modular varieties are isomorphic to $\PP^3(\CC)$.  Three of them are related to $2U(2)\oplus A_1$ and the weights of generators of the three algebras are $\{2,2,2,2\}$, $\{1,1,1,1\}$ and $\{1,2,2,2\}$ respectively. Also we determine an algebra related to $2U(3)\oplus A_1$ which is generated by four forms of weight $1$, and two algebras related to $U(2)\oplus U(4)\oplus A_1$ which are generated by four forms of weight $1$ and four forms of weight $1/2$ respectively. Finally we obtain two algebras related to $U\oplus U(2)\oplus A_1(2)$ generated by modular forms of weights $\{2,2,2,2\}$ and $\{1,2,2,2\}$ respectively.

Altogether, we will realize $\PP^3(\CC)$ and $\PP^4(\CC)$ in sixteen ways as orthogonal modular varieties associated to seven lattices in Theorems \ref{th:Runge}, \ref{th:Perna}, \ref{th:Matsumoto}, \ref{th:projective4}, \ref{th:projective5}, \ref{th:FSM}, \ref{th:FSMPerna}, \ref{th:projective8}, \ref{th:projective9}, \ref{th:projective10}, \ref{th:projective11}, \ref{th:projective13}, \ref{th:projective14}, \ref{th:projective15}, \ref{th:projective12}, \ref{th:projective12-2}. The generators of these algebras of modular forms are all constructed as Borcherds products \cite{Bor98}, with one exception where we require an additive theta lift. Along the way we find a simple proof of the famous theorem of Igusa \cite{Igu64} that the algebra of Siegel modular forms of genus $2$ on the level $8$ subgroup $\Gamma_2[4,8]$ is generated by the ten theta constants (Theorem \ref{th:Igusa}), as well as its analogue for Hermitian modular forms over the Gaussian integers (Theorem \ref{th:analogue-Igusa}). We also find several other interesting free algebras of modular forms; for example, we will find a tower of eight congruence subgroups of $\Sp_4(\ZZ)$, all of whose algebras of modular forms are described explicitly and six of which are free.

The layout of this paper is as follows. In \S \ref{sec:preliminaries} we recall the necessary and sufficient condition for free algebras mentioned above (and a straightforward generalization to half-integral weight modular forms). In \S \ref{sec:A1}--\ref{sec:U2A1(2)} we study the algebras of modular forms attached to the seven lattices respectively. In the appendices we give some data related to the input forms for certain Borcherds products; the Fourier expansions of the Borcherds products, and the products themselves as SAGE objects, are available in the ancillary material.

At several points in this paper, we work directly with the Fourier expansions of Borcherds products and therefore need a significant number of coefficients of their input forms. These were computed in SAGE \cite{sagemath} using the algorithm outlined in \cite{Wi18}, an implementation of which is available on GitHub. We also used SAGE for certain graph computations, including much of the data in the appendices.

\section{Preliminaries}\label{sec:preliminaries}
Let $M$ be an even lattice of signature $(n,2)$ with $n\geq 3$ and dual lattice $M^\vee$. The type IV symmetric domain $\cD_n=\cD(M)$ is one of the two connected components of the space 
$$
\{ [\mathcal{Z}]\in \PP(M\otimes\CC) : (\mathcal{Z}, \mathcal{Z})=0, (\mathcal{Z},\bar{\mathcal{Z}}) < 0 \}.
$$
We define the affine cone over $\cD(M)$ as
$$
\mathcal{A}(M)=\{ \mathcal{Z} \in M\otimes\CC : [\mathcal{Z}]\in \cD(M) \}.
$$
Let us denote by $\Orth^+(M)$ the integral orthogonal group of $M$ preserving the connected component $\cD(M)$. The discriminant kernel $\widetilde{\Orth}^+(M)$  is the subgroup of $\Orth^+(M)$ which acts trivially on the discriminant form of $M$. Fix a finite index subgroup $\Gamma$ of $\Orth^+(M)$.

\begin{definition}
Let $k$ be a non-negative integer. A \emph{modular form} of weight $k$ and character $\chi: \Gamma\to \CC^*$ for $\Gamma$ is a holomorphic function $F: \mathcal{A}(M)\to \CC$ satisfying
\begin{align*}
F(t\mathcal{Z})&=t^{-k}F(\mathcal{Z}), \quad \forall t \in \CC^*,\\
F(g\mathcal{Z})&=\chi(g)F(\mathcal{Z}), \quad \forall g\in \Gamma.
\end{align*}
\end{definition}
The graded algebra of modular forms of integral weight is denoted by
$$
M_*(\Gamma)=\bigoplus_{k = 0}^{\infty} M_k(\Gamma).
$$

One can realize the symmetric domain $\cD(M)$ as a tube domain at a zero-dimensional cusp. The above modular form can be viewed as an automorphic form on a tube domain for $\Gamma$ with respect to an automorphy factor. Using the tube domain model one can define modular forms of half-integral weight with a multiplier system. We refer to \cite[\S 3.3]{Bru02} for more details.

For any $r\in M^\vee$ of positive norm, the hyperplane
\begin{equation*}
 \cD_r(M)=r^\perp\cap \cD(M)=\{ [\mathcal{Z}]\in \cD(M) : (\mathcal{Z},r)=0\}
\end{equation*}
is called the \emph{rational quadratic divisor} associated to $r$.
The reflection fixing $\cD_r(M)$ is
\begin{equation*}
\sigma_r(x)=x-\frac{2(r,x)}{(r,r)}r,  \quad x\in M.
\end{equation*}
The hyperplane $\cD_r(M)$ is called the \emph{mirror} of $\sigma_r$. For a non-zero vector $r\in M^\vee$ we denote its order in $M^\vee/M$ by $\ord(r)$.
A primitive vector $r\in M^\vee$ of positive norm is called \emph{reflective} if $\sigma_r\in\Orth^+(M)$, or equivalently, if there exists a positive integer $d$ such that $(r,r)=\frac{2}{d}$ and $\ord(r)=d$ or $\frac{d}{2}$. In this case we call $\cD_r(M)$ a \emph{reflective divisor}. A modular form $F$ for $\Gamma<\Orth^+(M)$ is called \emph{reflective} if its divisor is a sum of reflective divisors. $F$ is called \emph{2-reflective} if its divisor is a sum of divisors $\cD_r(M)$ for $r\in M$ with  $(r,r)=2$. We remark that if $r\in M$ is primitive then the reflection $\sigma_r$ belongs to $\widetilde{\Orth}^+(M)$ if and only if $(r,r)=2$.

For convenience, we recall some results of \cite{Wan20} which will be used in this paper.
The modular Jacobian, or Rankin--Cohen--Ibukiyama differential operator, was first introduced in \cite{AI05} for Siegel modular forms.
\begin{theorem}[Theorem 2.5 in \cite{Wan20}]\label{th:Jacobian}
Let $M$ be an even lattice of signature $(n,2)$ with $n\geq 3$, and let $\Gamma<\Orth^+(M)$ be a finite index subgroup. Let $f_i\in M_{k_i}(\Gamma)$ for $1\leq i \leq n+1$. We view $f_i$ as modular forms on the tube domain at a given zero-dimensional cusp and let $z_i$, $1\leq i \leq n$, be coordinates on the tube domain. We define 
$$
J:=J(f_1,...,f_{n+1})=\left\lvert \begin{array}{cccc}
k_1f_1 & k_2f_2 & \cdots & k_{n+1}f_{n+1} \\ 
\frac{\partial f_1}{\partial z_1} & \frac{\partial f_2}{\partial z_1} & \cdots & \frac{\partial f_{n+1}}{\partial z_1} \\ 
\vdots & \vdots & \ddots & \vdots \\ 
\frac{\partial f_1}{\partial z_n} & \frac{\partial f_2}{\partial z_n} & \cdots & \frac{\partial f_{n+1}}{\partial z_n}
\end{array}   \right\rvert.
$$
\begin{enumerate}
\item $J$ is a cusp form of weight $n+\sum_{i=1}^{n+1}k_i$ for $\Gamma$ with the determinant character $\det$.
\item $J$ is not identically zero if and only if the $n+1$ modular forms $f_i$ are algebraically independent over $\CC$.
\item Let $r\in M$. If the reflection $\sigma_r$ belongs to $\Gamma$, then $J$ vanishes on the hyperplane $\cD_r(M)$.
\end{enumerate}
\end{theorem}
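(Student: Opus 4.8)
The plan is to prove the three assertions in turn; throughout write $\mathcal M(F)$, $F=(f_1,\dots,f_{n+1})$, for the $(n+1)\times(n+1)$ matrix appearing in the definition of $J$, so that $J=\det\mathcal M(F)$, and work on the affine cone $\cA(M)$, where a modular form of weight $k$ and character $\chi$ is just a holomorphic function that is homogeneous of degree $-k$ and transforms by $\chi$ under $\Gamma$ (I take $\chi$ trivial as in the statement; in general $\det$ below must be replaced by $\chi^{n+1}\det$). For assertion (1) I would argue with differential forms. Since $g^* f_i=f_i$ for every $g\in\Gamma$, the holomorphic $(n+1)$-form $\omega=df_1\wedge\cdots\wedge df_{n+1}$ on the $(n+1)$-dimensional cone $\cA(M)$ is $\Gamma$-invariant, and the Euler vector field $E$ generating the $\CC^*$-scaling is $\Gamma$-invariant as well, so $\iota_E\omega$ is a $\Gamma$-invariant $n$-form. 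Written in tube coordinates $z=(z_1,\dots,z_n)$ at a zero-dimensional cusp, with $t$ the coordinate along the cone, a direct computation gives
\[
\iota_E\big(df_1\wedge\cdots\wedge df_{n+1}\big)=\pm\,t^{-\sum_i k_i}\,J\,dz_1\wedge\cdots\wedge dz_n .
\]
Applying $g^*$ to both sides, using $g^* t=j(g,z)\,t$ and $g^*\big(dz_1\wedge\cdots\wedge dz_n\big)=\det(g)\,j(g,z)^{-n}\,dz_1\wedge\cdots\wedge dz_n$ (the latter being the standard Jacobian identity for the $\Gamma$-action on $\cD_n$, with $j$ the automorphy factor), one reads off
\[
J(gz)=\det(g)\,j(g,z)^{\,n+\sum_i k_i}\,J(z),
\]
so $J$ is a holomorphic modular form of weight $n+\sum_i k_i$ with character $\det$. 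For cuspidality I would pass to the Fourier (respectively Fourier--Jacobi) expansion of each $f_i$ at a given cusp: any row of $\mathcal M(F)$ coming from a derivative in a direction transverse to the boundary component has vanishing lowest-order coefficient, and there is at least one such row because the boundary has positive codimension; hence the lowest-order coefficient of $J$ is the determinant of a matrix with a zero row, so it vanishes and $J$ is a cusp form.

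For assertion (2) I would invoke the Jacobian criterion for algebraic independence. The $n+1$ derivations $E,\partial_{z_1},\dots,\partial_{z_n}$ form a basis of the module of $\CC$-derivations of the function field of $\cA(M)$, and $Ef_i=-k_if_i$, so up to a sign in the first row $\mathcal M(F)$ is exactly the matrix of the differentials $df_1,\dots,df_{n+1}$ with respect to this basis. By the Jacobian criterion, $df_1,\dots,df_{n+1}$ are linearly independent over the function field precisely when $f_1,\dots,f_{n+1}$ are algebraically independent over $\CC$, and this linear independence is in turn equivalent to $J=\det\mathcal M(F)\not\equiv0$. (The implication ``algebraically dependent $\Rightarrow J\equiv0$'' can also be seen by hand: if $P(f_1,\dots,f_{n+1})\equiv0$ is a nonzero relation of minimal degree, which we may take weighted-homogeneous of weighted degree $d$, then differentiating in the $z_a$ and using the Euler relation $\sum_i k_i x_i\,\partial_iP=d\,P$ shows that the vector $\big((\partial_iP)(f)\big)_i$ — not identically zero, by minimality of $\deg P$ — lies in the kernel of $\mathcal M(F)$, forcing $J\equiv0$.)

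For assertion (3) it suffices to use (1). Since $J$ is a modular form with character $\det$, on the cone it satisfies $J(g\mathcal Z)=\det(g)\,J(\mathcal Z)$ for all $g\in\Gamma$ and $\mathcal Z\in\cA(M)$. Apply this with $g=\sigma_r$, where $r\in M$ and $\sigma_r\in\Gamma$. If $[\mathcal Z]\in\cD_r(M)$, i.e.\ $(\mathcal Z,r)=0$, then $\sigma_r\mathcal Z=\mathcal Z$ while $\det(\sigma_r)=-1$, so $J(\mathcal Z)=-J(\mathcal Z)=0$. Hence $J$ vanishes on $\cD_r(M)$.

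The part I expect to require the most care is the bookkeeping in (1): pinning down the automorphy factor and the character exactly — which relies on the Jacobian identity $\det\big(\partial(gz)/\partial z\big)=\det(g)\,j(g,z)^{-n}$ — and verifying that $J$ vanishes at every zero- and one-dimensional boundary component of the Baily--Borel compactification. For (2) the only delicate ingredient is the standard fact that the $\CC$-derivations of the function field of $\cA(M)$ form a free module of rank $n+1$ on $E,\partial_{z_1},\dots,\partial_{z_n}$, after which the Jacobian criterion applies verbatim; granting these two points, (3) is immediate.
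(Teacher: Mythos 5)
Parts (1) and (3) of your proposal are sound and follow the standard route: the transformation law via the identity $\det\bigl(\partial(gz)/\partial z\bigr)=\det(g)\,j(g,z)^{-n}$, cuspidality from the vanishing of the lowest-order Fourier(--Jacobi) coefficient of the derivative rows, and the fixed-point argument $J(\mathcal Z)=\det(\sigma_r)J(\mathcal Z)=-J(\mathcal Z)$ on $\cD_r(M)$. Note that the paper itself does not prove this statement; it quotes it from \cite{Wan20}, so the comparison below is with the argument given there.

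There is, however, a genuine gap in your proof of (2), in the direction ``$f_1,\dots,f_{n+1}$ algebraically independent $\Rightarrow J\not\equiv 0$.'' You invoke the Jacobian criterion over ``the function field of $\cA(M)$,'' i.e.\ the field of meromorphic functions on an open subset of $\CC^{n+1}$. That criterion is only valid for finitely generated field extensions of $\CC$ (equivalently, function fields of algebraic varieties); for arbitrary holomorphic functions it fails. Concretely, $f_1=z_1$ and $f_2=e^{z_1}$ on $\CC^2$ are algebraically independent over $\CC$, yet every $2\times 2$ minor of their matrix of partial derivatives vanishes identically: the honest analytic differentials $df_1,\dots$ detect functional dependence, not algebraic dependence, once the ambient field has infinite transcendence degree. (Your parenthetical ``by hand'' argument correctly handles the easy converse, that an algebraic relation forces $J\equiv 0$.) The missing ingredient is precisely the modularity of the $f_i$: one must pass to the quotients $f_i^{k_{n+1}}/f_{n+1}^{k_i}$, which are rational functions on the $n$-dimensional quasi-projective variety $\cD_n/\Gamma$ (Baily--Borel), so that they live in a finitely generated field of transcendence degree $n$ where the Jacobian criterion does apply; the extra row of the matrix, coming from the Euler derivation and the weights $k_i$, then accounts for the grading. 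This is how the cited proof in \cite{Wan20} (following Aoki--Ibukiyama) proceeds, and without some such reduction your step (2) does not go through.
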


If $M_*(\Gamma)$ is a free algebra then the Jacobian of its generators satisfies some remarkable properties:

\begin{theorem}[Theorem 3.5 in \cite{Wan20}]\label{th:freeJacobian}
Assume that $M_*(\Gamma)$ is a free algebra with generators $f_1,...,f_{n+1}$ of weights $k_1,...,k_{n+1}$.
\begin{enumerate}
\item The Jacobian $J=J(f_1,...,f_{n+1})$ is not identically zero and it is a cusp form of weight $n+\sum_{i=1}^{n+1}k_i$ for $\Gamma$ with the character $\det$.

\item The divisor of $J$ is the sum of all mirrors of reflections in $\Gamma$, each with multiplicity $1$. In particular, $J$ is a reflective cusp form.

\item Let $\{ \Gamma \pi_1, ..., \Gamma \pi_s\}$ denote the $\Gamma$-equivalence classes of mirrors of reflections in $\Gamma$. Then for each $1 \le i \le s$ there exists a modular form $J_i$ for $\Gamma$ with trivial character and divisor $\mathrm{div}(J_i) = 2 \Gamma \pi_i$, and $J^2 = \prod_{i=1}^s J_i$. The forms $J_i$ are irreducible in $M_*(\Gamma)$.

\item There exist polynomials $P$, $P_i$, $1\leq i \leq s$, in $n+1$ variables over $\CC$ such that $J^2=P(f_1,...,f_{n+1})$ and $J_i =P_i(f_1,...,f_{n+1})$. Thus $P=\prod_{i=1}^s P_i$ and these $P_i$ are irreducible. 
\end{enumerate}
\end{theorem}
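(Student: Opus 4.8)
\emph{Proof plan.} I would bootstrap everything from Theorem~\ref{th:Jacobian} by interpreting $J$ as the Jacobian of the map defined by the $f_i$. First, part (1) is immediate: since $M_*(\Gamma)=\CC[f_1,\dots,f_{n+1}]$, the generators are algebraically independent over $\CC$, so Theorem~\ref{th:Jacobian}(2) gives $J\not\equiv 0$ and Theorem~\ref{th:Jacobian}(1) gives that $J$ is a cusp form of weight $n+\sum_i k_i$ with character $\det$. Everything else will rest on computing $\operatorname{div}(J)$ in part (2), from which (3) and (4) follow by commutative algebra. To prepare for (2) I would first note that $f_1,\dots,f_{n+1}$ have no common zero on $\cA(M)$ — otherwise every positive-weight modular form, being a constant-term-free polynomial in the $f_i$, would vanish there, contradicting that $\cD(M)/\Gamma$ is an open subvariety of $\Proj M_*(\Gamma)$. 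Hence $\widetilde\Phi\colon\cA(M)\to\CC^{n+1}\setminus\{0\}$, $\mathcal Z\mapsto(f_1(\mathcal Z),\dots,f_{n+1}(\mathcal Z))$, is a well-defined holomorphic map between smooth $(n+1)$-folds; it is $\Gamma$-invariant and homogeneous, and it descends to $\Phi=[f_1:\dots:f_{n+1}]\colon\cD(M)\to\Proj M_*(\Gamma)$, which is the quotient $p\colon\cD(M)\to\cD(M)/\Gamma$ followed by the open immersion $\cD(M)/\Gamma\hookrightarrow\Proj M_*(\Gamma)$.

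For part (2) I would show that $\operatorname{div}(J)$ equals the ramification divisor of $\widetilde\Phi$. Parametrising $\cA(M)$ near a point by a scaling parameter $t$ and tube-domain coordinates $z_1,\dots,z_n$, homogeneity gives $\partial f_i/\partial t=-k_i t^{-1}f_i$, so the Jacobian determinant of $\widetilde\Phi$ in these coordinates is $-t^{-1}$ times exactly the determinant defining $J$; as $t$ is nowhere zero on $\cA(M)$, we get $\operatorname{div}(J)=\operatorname{div}(\operatorname{Jac}\widetilde\Phi)$, the ramification divisor of $\widetilde\Phi$ — equivalently, since $\cD(M)/\Gamma\hookrightarrow\Proj M_*(\Gamma)$ is an open immersion, the ramification divisor of $p$, i.e. the codimension-one part of the fixed-point locus of torsion elements of $\Gamma$. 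The crucial input is then the classification: an element $g\in\Orth^+(M)$ of finite order with divisorial fixed locus in $\cD(M)$ must have a codimension-one eigenspace on $M\otimes\CC$, and orthogonality forces $g=\pm\sigma_r$ for some $r\in M^\vee$ of positive norm. Thus the divisorial fixed loci are precisely the mirrors $\cD_r(M)$ with $\sigma_r\in\Gamma$, each with stabiliser of order $2$ at a generic point, hence ramification index $2$ and multiplicity $2-1=1$; combined with the fact that $J$ vanishes along each such mirror (Theorem~\ref{th:Jacobian}(3)), this yields $\operatorname{div}(J)=\sum_\pi\pi$, the reduced sum over all mirrors of reflections in $\Gamma$. (As a cross-check one could instead compute $K_{\Proj M_*(\Gamma)}=\mathcal O(-\sum_i k_i)$ and use the ramification formula for $p$ to identify $J$ with the canonical section of $\mathcal O(\sum_\pi\pi)$.) \textbf{I expect this step to be the main obstacle}: one must carry out the local Jacobian identification, prove the classification of divisorial fixed loci, and verify that the codimension-$\ge 2$ singular loci of $\cD(M)/\Gamma$ and of the weighted projective space $\Proj M_*(\Gamma)$ do not contribute, since they affect neither side of $\operatorname{div}(J)=\operatorname{Ram}(p)$.

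Finally I would deduce (3) and (4). Since $\det$ is $\{\pm1\}$-valued, $J^2$ has trivial character, so $J^2\in M_*(\Gamma)=\CC[f_1,\dots,f_{n+1}]$ and, as $x_i\mapsto f_i$ is an isomorphism $\CC[x_1,\dots,x_{n+1}]\xrightarrow{\sim}M_*(\Gamma)$ (surjective as the $f_i$ generate, injective by algebraic independence), there is a unique polynomial $P$ with $J^2=P(f_1,\dots,f_{n+1})$. Factor $P=c\prod_j Q_j^{a_j}$ into pairwise non-associate irreducibles over $\CC$ and set $J_j=Q_j(f_1,\dots,f_{n+1})$, a modular form with trivial character whose divisor on the normal variety $\cD(M)/\Gamma$ is the prime divisor $\{Q_j=0\}$. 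On the other hand, near a generic point of a mirror $\cD_r(M)$ the local $\Gamma$-stabiliser is $\langle\sigma_r\rangle$, so $z_1^2$ is a local coordinate on $\cD(M)/\Gamma$ transverse to the mirror, and part (2) shows $J^2$ vanishes to order exactly $1$ along each of the $s$ distinct prime divisors $\Gamma\pi_1,\dots,\Gamma\pi_s$ and nowhere else. Comparing $\operatorname{div}(J^2)=\sum_j a_j\{Q_j=0\}$ with $\sum_{i=1}^s\Gamma\pi_i$ then forces, after relabelling, that there are exactly $s$ factors $Q_j$, each $a_j=1$, and $\{Q_j=0\}=\Gamma\pi_j$. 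Putting $P_i=Q_i$ (absorbing $c$) gives $P=\prod_{i=1}^s P_i$, $J^2=\prod_{i=1}^s J_i$, $\operatorname{div}(J_i)=2\,\Gamma\pi_i$ on $\cD(M)$, and each $P_i$ — hence each $J_i$, via the isomorphism above — is irreducible, which is precisely parts (3) and (4).
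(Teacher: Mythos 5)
This theorem is imported verbatim from \cite{Wan20} (Theorem 3.5 there) and the present paper gives no proof of it, so there is nothing in the paper itself to compare against; your argument has to be measured against the cited source, and it does in fact follow the same strategy: identify $\operatorname{div}(J)$ with the ramification divisor of the Baily--Borel map $\cD(M)/\Gamma\hookrightarrow\Proj M_*(\Gamma)$, classify the divisorial fixed loci as mirrors of reflections with generic stabiliser of order two, and then read off (3) and (4) by factoring the polynomial representing $J^2$ and matching prime divisors on the dense open set $\cD(M)/\Gamma$ (for which you implicitly need the Baily--Borel boundary to have codimension at least $2$, true since $n\ge 3$). Your outline is sound, including parts (1), (3) and (4), and you have correctly located the genuine work in part (2). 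Two details there deserve explicit mention beyond what you wrote. First, to pass from ``$J(\mathcal Z_0)=0$'' to ``$\mathcal Z_0$ has a nontrivial stabiliser fixing a divisor'' you need the classical fact that a locally injective holomorphic map between equidimensional complex manifolds is a local biholomorphism, together with proper discontinuity to promote failure of local injectivity along a whole component of $\operatorname{div}(J)$ to a single finite-order element of $\CC^*\cdot\Gamma$ fixing that component pointwise; the eigenvalue analysis then pins this element down to $\pm\sigma_r$ as you say. Second, since several of the weighted projective spaces occurring in this paper (e.g.\ $\PP(1,2,2,2,2)$) are not well-formed, one must check that the weighted $\CC^*$-action does not create extra ramification of $\widetilde\Phi$ along divisors of the form $\{f_i=0\}$; it does not, because the relevant finite subgroups of $\CC^*$ act freely on $\cA(M)$, so the extra preimages they create are never near the given point. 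With those points supplied, your reconstruction is correct and is essentially the proof of \cite{Wan20}.
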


The following sufficient condition for a graded algebra of modular forms to be free will play a vital role in this paper.

\begin{theorem}[Theorem 5.1 in \cite{Wan20}]\label{th:converseJacobian}
Let $\Gamma<\Orth^+(M)$ be a finite index subgroup. Suppose there exist modular forms $f_1,...,f_{n+1}$ with trivial character whose Jacobian $$J = J(f_1,...,f_{n+1})$$ vanishes exactly on the mirrors of reflections in $\Gamma$ with multiplicity one. Then the graded algebra $M_*(\Gamma)$ is freely generated by $f_1,...,f_{n+1}$ and $\Gamma$ is generated by the reflections whose mirrors lie in the divisor of $J$.
\end{theorem}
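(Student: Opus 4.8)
The plan is to show, by induction on the weight, that every element of $M_*(\Gamma)$ is a polynomial in $f_1,\dots,f_{n+1}$. Since $J\not\equiv 0$, the forms $f_1,\dots,f_{n+1}$ are algebraically independent over $\CC$ by Theorem~\ref{th:Jacobian}(2). Consequently, once the claim is established we will have $M_*(\Gamma)=\CC[f_1,\dots,f_{n+1}]$, a polynomial ring, freely generated by the $f_i$; the assertion about reflections will then follow quickly.

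The engine of the induction is a division lemma. Fix $F\in M_k(\Gamma)$ (trivial character), and for $1\le i\le n+1$ put $J_i:=J(f_1,\dots,f_{i-1},F,f_{i+1},\dots,f_{n+1})$, the modular Jacobian with $F$ inserted in the $i$-th slot. By Theorem~\ref{th:Jacobian}(1) this is a cusp form of weight $n+k+\sum_{j\ne i}k_j$ with character $\det$; since each reflection $\sigma_r\in\Gamma$ fixes $\cD_r(M)$ pointwise and has $\det(\sigma_r)=-1$, the form $J_i$ vanishes along the mirror of every reflection in $\Gamma$, hence along every component of $\div(J)$. By hypothesis $J$ vanishes to order exactly $1$ along each such mirror and is nonzero everywhere else on $\cA(M)$, so $g_i:=J_i/J$ is holomorphic on $\cA(M)$, is $\Gamma$-invariant with trivial character, and is homogeneous of the correct degree; thus $g_i\in M_{k-k_i}(\Gamma)$ (and $g_i=0$ when $k<k_i$). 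Working near a $0$-dimensional cusp, with a coordinate $\lambda$ on the radial $\CC^*$-direction and coordinates $z_1,\dots,z_n$ on the tube domain, the ``$k_if_i$'' row of the Jacobian matrix is, up to a nonzero factor, the $\partial_\lambda$-row, by Euler's relation; hence $\phi:=(f_1,\dots,f_{n+1})$ is a local biholomorphism of $\cA(M)$ onto its image on the dense open set $\{J\ne 0\}$. Solving $dF=\sum_i h_i\,df_i$ there by Cramer's rule gives $h_i=J_i/J=g_i$, and as both sides are holomorphic on all of $\cA(M)$ we obtain the identity of holomorphic $1$-forms
\[
dF=\sum_{i=1}^{n+1} g_i\,df_i\qquad\text{on }\cA(M).
\]

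Now the induction itself. The base case is $M_0(\Gamma)=\CC$. Assume that every modular form of weight $<k$ lies in $A:=\CC[f_1,\dots,f_{n+1}]$, and take $F\in M_k(\Gamma)$. Each $g_i$ has weight $k-k_i<k$, so $g_i=G_i(f_1,\dots,f_{n+1})$ for some polynomial $G_i\in\CC[X_1,\dots,X_{n+1}]$. Applying $d$ to the displayed identity and using $d^2=0$ gives $\sum_i dg_i\wedge df_i=0$; substituting $dg_i=\sum_j(\partial G_i/\partial X_j)(f_1,\dots,f_{n+1})\,df_j$ and invoking the linear independence of the wedges $df_i\wedge df_j$ with $i<j$ (a consequence of the algebraic independence of the $f_i$) forces $\partial G_i/\partial X_j=\partial G_j/\partial X_i$ identically, for all $i,j$. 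Hence there is $P\in\CC[X_1,\dots,X_{n+1}]$ with $\partial P/\partial X_i=G_i$ for every $i$, and then $d\bigl(F-P(f_1,\dots,f_{n+1})\bigr)=0$; since $\cA(M)$ is connected, $F-P(f_1,\dots,f_{n+1})$ is a constant, so $F\in A$. This completes the induction, and $M_*(\Gamma)=A$ is freely generated by $f_1,\dots,f_{n+1}$.

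Finally, we identify $\Gamma$ with the reflection subgroup. Now that $M_*(\Gamma)$ is known to be a free algebra, $\Gamma$ is generated by reflections (\cite{VP89}), and the mirror of each reflection in $\Gamma$ is a component of $\div(J)$ by the $\det$-character argument above; therefore $\Gamma$ coincides with the group generated by the reflections whose mirrors lie in $\div(J)$. I expect the main obstacle to be the division lemma and the accompanying bookkeeping: one must verify that each $g_i$ is an honest holomorphic modular form --- this is exactly where the hypothesis that $J$ has only simple zeros, and no zeros off the mirrors, is indispensable --- and one must establish the $1$-form identity $dF=\sum_i g_i\,df_i$ on the full $(n+1)$-dimensional cone $\cA(M)$, with the radial direction correctly accounted for. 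Once that is done, the integrability step $d^2=0\Rightarrow\partial_jG_i=\partial_iG_j$ makes the induction run essentially by itself.
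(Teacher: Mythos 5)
Your argument is correct, but it takes a genuinely different route from the paper's (the theorem itself is quoted from \cite{Wan20}, but the paper's proof of the half-integral analogue, Theorem \ref{th:criterion}, is stated to be essentially the same argument, so that is the natural comparison). Both proofs rest on the identical division lemma: for $F$ of weight $k$, the Jacobians with $F$ substituted into one slot vanish on every mirror by Theorem \ref{th:Jacobian}(3), so each $g_i=J_i/J$ is a holomorphic modular form of weight $k-k_i$ with trivial character --- this is where the hypothesis that $J$ has simple zeros exactly on the mirrors is used, in both arguments. The divergence is in how the $g_i$ are exploited. You prove the full $1$-form identity $dF=\sum_i g_i\,df_i$ on the cone and then must \emph{integrate} it: write $g_i=G_i(f)$ by induction on the weight, use $d^2=0$ and the nonvanishing of $J$ at a point to force $\partial_jG_i=\partial_iG_j$, produce a polynomial potential $P$ via the Poincar\'e lemma, and conclude $F-P(f)$ is constant on the connected cone. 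The paper instead takes a minimal-weight counterexample and expands an $(n+2)\times(n+2)$ determinant with a repeated first row, obtaining the single scalar relation $(-1)^{n+1}k_{n+2}f_{n+2}=\sum_{t\leq n+1}(-1)^tk_tf_tg_t$, which exhibits the offending form as a polynomial in $f_1,\dots,f_{n+1}$ outright, with no integrability step. That scalar relation is exactly the contraction of your $1$-form identity with the Euler (radial) vector field, since $\iota_E\,df_i=-k_if_i$; had you contracted instead of integrated, the entire second half of your argument (closedness, mixed partials, potential, connectedness) would collapse to one line, and you would not even need $k\neq 0$ to be handled separately beyond the trivial base case. Your route buys a transparent Chevalley-style picture of $\phi=(f_1,\dots,f_{n+1})$ as a branched cover; the paper's buys brevity. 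Your closing step --- citing \cite{VP89} to get that $\Gamma$ is reflection-generated once freeness is known, then locating all mirrors in $\operatorname{div}(J)$ --- is sound, though it is a heavier hammer than the direct comparison of $\Gamma$ with the subgroup generated by the reflections along $\operatorname{div}(J)$.
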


This theorem also holds for modular forms of half-integral weight and the proof is nearly the same. We replace $M_*(\Gamma)$ with the graded algebra of half-integral weight modular forms with respect to a fixed multiplier system $v$ of weight $1/2$:
$$
M_*(\Gamma, v)=\bigoplus_{k = 0}^{\infty} M_{\frac{k}{2}}(\Gamma, v^k).
$$
\begin{theorem}\label{th:criterion}
Suppose there exist modular forms $f_1,...,f_{n+1} \in M_*(\Gamma, v)$ whose Jacobian $J = J(f_1,...,f_{n+1})$ vanishes exactly on the mirrors $\cD_r(M)$ of reflections $\sigma_r$ in $\Gamma$ that satisfy $v(\sigma_r) = 1$ with multiplicity one. Then $M_*(\Gamma, v)$ is freely generated by $f_i$.
\end{theorem}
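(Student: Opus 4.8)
\emph{Proof plan.} Since the statement is the half-integral analogue of Theorem \ref{th:converseJacobian}, the plan is to rerun the proof of that result, carrying the fixed weight-$1/2$ multiplier system $v$ along and checking that it enters in only two essential places; write $R := M_*(\Gamma, v)$ and $A := \CC[f_1,\dots,f_{n+1}] \subseteq R$. First, the Jacobian: the chain-rule computation behind Theorem \ref{th:Jacobian}(1) is insensitive to the multiplier, so $J$ is holomorphic on $\cA(M)$ of weight $n + \tfrac12\sum_i k_i$, and since under $g \in \Gamma$ the $i$-th column of the determinant defining $J$ picks up the scalar $v(g)^{k_i}$ on top of the automorphy factor common to all columns, $J$ transforms with the multiplier $v^{\sum_i k_i}\det$, where $\det$ is the determinant character of the $\Gamma$-action on the tube-domain coordinates. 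Theorem \ref{th:Jacobian}(2) is formal: $J \not\equiv 0$ iff $f_1,\dots,f_{n+1}$ are algebraically independent over $\CC$.

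Second, the divisor of $J$. If $\sigma_r \in \Gamma$ is a reflection with $v(\sigma_r) = 1$, then each $f_i$ satisfies $f_i \circ \sigma_r = (\text{automorphy factor})\cdot f_i$ with no extra root of unity, so near the mirror $\cD_r(M)$ the functions $f_i$ behave exactly like modular forms for the order-two element $\sigma_r$ with trivial multiplier; choosing local coordinates in which $\sigma_r$ negates the normal coordinate, the argument of \cite{Wan20} gives $J|_{\cD_r(M)} = 0$. This breaks when $v(\sigma_r)\neq 1$, which is precisely why such mirrors are excluded, and the hypothesis is exactly that $\div(J)$ equals the reduced sum of the mirrors of reflections $\sigma_r \in \Gamma$ with $v(\sigma_r) = 1$.

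The rest of the proof of Theorem \ref{th:converseJacobian} then goes through with $M_*(\Gamma)$ replaced by $R$. Algebraic independence of the $f_i$ makes $R$ a finitely generated module over the polynomial ring $A$; since $R$ is the section ring of an ample line bundle on the normal projective variety $(\cD_n/\Gamma)^*$ it is Cohen--Macaulay, hence a free graded $A$-module of rank $N = [\mathrm{Frac}(R):\mathrm{Frac}(A)]$. A ramification computation finishes the argument: $\div(J)$ is the ramification divisor of the composite $\cA(M) \to \mathrm{Spec}(R) \to \mathrm{Spec}(A)$, and since a reflection has ramification index two the first arrow alone already accounts for every mirror in $\div(J)$ with the prescribed multiplicity one; hence the finite cover $\mathrm{Spec}(R) \to \mathrm{Spec}(A)$ is unramified in codimension one over the regular base $\mathrm{Spec}(A)$, hence trivial, so $N = 1$ and $R = A$. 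That $\Gamma$ is generated by the reflections whose mirrors lie in $\div(J)$ follows as in \cite{Wan20}.

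The step I expect to require the most care is the interface between the two points above: one must verify that the geometric quotient whose section ring is $R = M_*(\Gamma, v)$ — morally $\cA(M)$ modulo the subgroup generated by the $\sigma_r$ with $v(\sigma_r) = 1$ together with the elements acting trivially on $\cA(M)$ — really has branch locus the union of exactly those mirrors, so that the hypothesis on $\div(J)$ translates into the ramification statement used above. A reflection $\sigma_r$ with $v(\sigma_r) \neq 1$ does not fix the sections of the relevant multiplier bundle and so must not contribute to this branch locus; pinning this down, together with tracking the weighted grading when passing between $J$ and the Jacobian of the associated finite map, is where the care is needed. Once that bookkeeping is settled, the argument is identical to the proof of Theorem \ref{th:converseJacobian}.
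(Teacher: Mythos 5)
Your proposal diverges from the paper's actual argument: the paper does not prove Theorem \ref{th:criterion} by a Cohen--Macaulay/ramification argument, but by an elementary induction on weight. One takes a hypothetical $f_{n+2} \in M_{k_{n+2}}(\Gamma, v^{2k_{n+2}})$ of minimal weight outside $\CC[f_1,\dots,f_{n+1}]$, forms the Jacobians $J_t = J(f_1,\dots,\hat f_t,\dots,f_{n+2})$, observes that each $J_t$ vanishes on all mirrors with $v(\sigma_r)=1$ so that $g_t := J_t/J$ is holomorphic, and then uses the vanishing of the determinant with a repeated row to get $\sum_{t}(-1)^t k_t f_t J_t = 0$, hence $(-1)^{n+1}k_{n+2}f_{n+2} = \sum_{t\le n+1}(-1)^t k_t f_t g_t$ with each $g_t$ of weight strictly less than $k_{n+2}$; minimality forces $g_t \in \CC[f_1,\dots,f_{n+1}]$ and gives the contradiction. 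Algebraic independence then comes for free from the Krull dimension of $M_*(\Gamma,v)$.

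Your alternative route has genuine gaps as written. First, algebraic independence of the $f_i$ does not by itself make $R = M_*(\Gamma,v)$ a \emph{finite} module over $A=\CC[f_1,\dots,f_{n+1}]$ (compare $\CC[x^2,xy]\subseteq\CC[x,y]$: equal dimension, equal fraction field, not finite); you would need, e.g., that the $f_i$ have no common zero on $\cD_n$, which you do not establish. Second, the assertion that $R$ is Cohen--Macaulay ``since it is the section ring of an ample line bundle on a normal projective variety'' is false in that generality: Cohen--Macaulayness of $\bigoplus_k H^0(X,L^k)$ requires vanishing of intermediate cohomology of all powers of $L$, which is a delicate point for Baily--Borel compactifications (singular at the cusps) and is nowhere needed in the paper's argument. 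Third, the purity-of-the-branch-locus step requires $\mathrm{Spec}(R)$ to be normal, which is again not addressed, and the identification of $\div(J)$ with the ramification divisor of $\mathcal{A}(M)\to\mathrm{Spec}(R)$ in the presence of a nontrivial multiplier system $v$ is exactly the bookkeeping you flag as needing care but leave open. Each of these might be repairable, but none is automatic, and the paper's determinant identity avoids all of them.
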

\begin{proof}
Suppose that $M_*(\Gamma,v)$ is not generated by $f_i$. Then $\CC[f_1,...,f_{n+1}] \ne M_*(\Gamma, v)$, and we choose a modular form $f_{n+2} \in M_{k_{n+2}}(\Gamma, v^{2k_{n+2}})$ of minimal weight such that $f_{n+2} \notin \CC[f_1,...,f_{n+1}]$. For $1\leq t \leq n+2$ we define $$J_t = J(f_1,..., \hat f_t, ..., f_{n+2})$$ as the Jacobian of the $n+1$ modular forms $f_i$ omitting $f_t$ (so in particular $J=J_{n+2}$). Similarly to Theorem \ref{th:Jacobian}, one can show that the Jacobian $J_t$ is a modular form of weight $k = n + \sum_{i \ne t} k_i$ and multiplier system $v^{2k}\det$ on $\Gamma$, and that $J_t$ vanishes on all mirrors of reflections in $\Gamma$ satisfying $v(\sigma_r)=1$. Therefore the quotient $g_t := J_t/J$ is a holomorphic modular form in $M_*(\Gamma,v)$.

We compute
$$
0 = \mathrm{det} \begin{psmallmatrix} k_1 f_1 & k_2 f_2 & \cdots & k_{n+2} f_{n+2} \\ k_1 f_1 & k_2 f_2 & \cdots & k_{n+2} f_{n+2} \\ \nabla_z f_1 & \nabla_z f_2 & ... & \nabla_z f_{n+2} \end{psmallmatrix} = \sum_{t=1}^{n+2} (-1)^t k_t f_t J_t = \Big( \sum_{t=1}^{n+2} (-1)^t k_t f_t g_t \Big) \cdot J,
$$
and therefore
$$
(-1)^{n+1}k_{n+2}f_{n+2}= \sum_{t=1}^{n+1}(-1)^t k_t f_t g_t
$$
because $g_{n+2}=1$. In particular, each $g_t$ has weight strictly less than that of $f_{n+2}$. By construction of $f_{n+2}$, this implies $g_t \in \CC[f_1,...,f_{n+1}]$, and therefore $f_{n+2} \in \CC[f_1,...,f_{n+1}]$, a contradiction. Therefore $M_*(\Gamma, v)$ is generated by the $f_i$. Since $M_*(\Gamma, v)$ has Krull dimension $n+1$, these generators are algebraically independent.
\end{proof}
Unlike the integral-weight case, we cannot conclude in general that $\Gamma$ is generated by the reflections whose mirrors are contained in the divisor of the Jacobian. However, this does hold if $v(\sigma_r)=1$ for all reflections $\sigma_r \in \Gamma$.

We can investigate the modular variety $\mathcal{D}_n / \Gamma$ using algebras of half-integral weight modular forms because the Proj is unchanged under Veronese embeddings:
$$
\Proj(M_*(\Gamma,v)^{(d)})=\Proj(M_*(\Gamma,v))=\Proj(M_*(\Gamma)^{(d)})=\Proj(M_*(\Gamma))=(\cD_n / \Gamma)^*,
$$
for any $d \in \mathbb{N}$, where
$$
M_*(\Gamma,v)^{(d)}=\bigoplus_{k\in \NN}M_{\frac{dk}{2}}(\Gamma,v^{dk}), \quad M_*(\Gamma)^{(d)}=\bigoplus_{k\in\NN} M_{dk}(\Gamma).
$$

\section{The \texorpdfstring{$2U(4)\oplus A_1$}{} lattice}\label{sec:A1}
In this section we recover Runge's theorem and Perna's theorem mentioned in the introduction. Runge's theorem asserts that the algebra of Siegel modular forms of genus $2$ on the level $4$ subgroup $\Gamma_2[2,4]$ is freely generated by $4$ theta constants of second order. We will find that the Jacobian of  four theta constants of second order is exactly Igusa's cusp form $\Phi_{5,A_1}$ up to a multiple, where $\Phi_{5,A_1}$ is the product of ten even theta constants. 
The Igusa cusp form $\Phi_{5,A_1}$ is a reflective modular form of weight 5 for $\Orth^+(2U\oplus A_1)$ whose divisor is the sum of all $\cD_r$ for primitive vectors $r\in 2U\oplus A_1^\vee$ with $(r,r)=\frac{1}{2}$ and $\ord(r)=2$, each with multiplicity one. In view of the isomorphisms
\begin{equation}\label{eq:group}
\Orth^+(M)=\Orth^+(M^\vee)=\Orth^+(M^\vee(m)),
\end{equation}
we have $\Orth^+(2U\oplus A_1)=\Orth^+(2U(4)\oplus A_1)$ and the function $\Phi_{5,A_1}$ can also be viewed as a 2-reflective modular form of weight 5 for $2U(4)\oplus A_1$ whose divisor is the sum of $\cD_s$ with multiplicity one for primitive vectors $s\in 2U(\frac{1}{4})\oplus A_1^\vee$ with $(s,s)=\frac{1}{2}$ and $\ord(s)=2$. Unlike $\widetilde{\Orth}^+(2U \oplus A_1)$, these account for all 2-reflections in $\widetilde{\Orth}^+(2U(4)\oplus A_1)$. Moreover, the discriminant kernel $\widetilde{\Orth}^+(2U(4)\oplus A_1)$ is a subgroup of $\widetilde{\Orth}^+(2U\oplus A_1)$, and there are many more Borcherds products on $\widetilde{\Orth}^+(2U(4)\oplus A_1)$. For these reasons it is natural to work with the lattice $2U(4)\oplus A_1$.

We will first recall the Borcherds products of singular weight on $2U(4)\oplus A_1$.
\begin{lemma}\label{lem:2U4A1products}
\noindent
\begin{enumerate}
\item There are 70 holomorphic Borcherds products of singular weight $1/2$ on $2U(4)\oplus A_1$. Their product is the Igusa cusp form of weight $35$.
\item Ten of the singular weight products have inputs with principal part $q^{-1/4}(e_v + e_w)$ where $v$ and $w$ have order 2 in $M^\vee / M$.  These are the ten theta constants and their product is $\Phi_{5,A_1}$. We call them the \emph{Type I products} and denote them $\theta_i$, $i=1,2,...,10$.
\item The remaining 60 singular weight products have inputs with principal part $q^{-1/4}(e_u + e_{-u})$, where $u$ has order 4. We call them the \emph{Type II products}.
\end{enumerate}
\end{lemma}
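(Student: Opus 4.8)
The plan is to classify the input forms of the relevant Borcherds lifts. By \cite{Bor98} (see also \cite{Bru02}), a Borcherds product on $M=2U(4)\oplus A_1$, a lattice of signature $(3,2)$, comes from a weakly holomorphic vector-valued modular form $F=\sum_{\gamma\in M^\vee/M}\sum_m c(\gamma,m)q^m e_\gamma$ of weight $1-\tfrac32=-\tfrac12$ for the (dual) Weil representation $\rho_M$, with $c(\gamma,m)\in\ZZ$ for $m\le 0$; the lift $\Psi_F$ has weight $\tfrac12 c(0,0)$ and is holomorphic exactly when $c(\gamma,m)\ge 0$ for all $m<0$. Since the singular weight here is $\tfrac{n}{2}-1=\tfrac12$, the products we want are those with $c(0,0)=1$, and I would first show this forces a minimal principal part. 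The discriminant form of $M$ is valued in $\tfrac14\ZZ/2\ZZ$, so the deepest possible pole is $q^{-1/4}$ and it occurs only on the set $S$ of $\gamma$ with $q(\gamma)\equiv\tfrac14\pmod 1$. The Eisenstein (obstruction) formula writes $c(0,0)$ as a positive combination $\sum_{\gamma,\,m<0}c(\gamma,m)\,b(\gamma,-m)$ with $b(\gamma,\tfrac14)=\tfrac12$ for $\gamma\in S$ and $b(\gamma,-m)>1$ once $-m\ge\tfrac12$; hence $c(0,0)=1$ together with non-negativity forces the principal part of $F$ to be $q^{-1/4}$ times an effective sum of total coefficient $2$ supported on $S$, after which $F$ itself is determined modulo the cusp forms $S_{5/2}(\rho_M)$.

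Next I would enumerate the candidate principal parts. Writing $M^\vee/M\cong(\ZZ/4\ZZ)^2\oplus(\ZZ/4\ZZ)^2\oplus\ZZ/2\ZZ$ with its explicit discriminant form, a direct count gives $|S|=136$: every element of $S$ has nonzero $A_1$-component, $16$ of them have order $2$ (those whose two $U(4)$-components are $2$-torsion), and the remaining $120$ have order $4$ and form $60$ pairs $\{u,-u\}$. Each order-$4$ pair yields one candidate input $q^{-1/4}(e_u+e_{-u})$ — the Type II products. For the order-$2$ elements one identifies the $16$ two-torsion classes in $S$ with the $16$ theta characteristics of genus $2$; the inputs $2q^{-1/4}e_v$ and those attached to the $6$ odd characteristics are killed by the obstruction space, and exactly the $10$ pairs $q^{-1/4}(e_v+e_w)$ attached to the even characteristics survive — the Type I products. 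Checking that all $70$ of these principal parts genuinely extend to weakly holomorphic inputs (i.e. pair to zero with $S_{5/2}(\rho_M)$) and that nothing else does is a finite linear-algebra computation, carried out in SAGE via the algorithm of \cite{Wi18}. For Type I one may instead exhibit the products directly as the ten even theta constants of genus $2$, classically known to be Borcherds products of weight $\tfrac12$ on $2U(4)\oplus A_1$.

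It remains to identify the products. A holomorphic Borcherds product is determined up to a nonzero scalar by its divisor, since its input has negative weight $-\tfrac12$ and is therefore determined by its principal part. The ten Type I products have divisor supported on the $2$-reflective mirrors $\cD_r$ with $(r,r)=\tfrac12$, $\ord(r)=2$, which matches the theta-null divisors of the ten even theta constants; so their product has weight $5$ and divisor the whole $2$-reflective divisor of $2U(4)\oplus A_1$ described in the introduction, i.e. it equals $\Phi_{5,A_1}$. The product of all $70$ lifts has weight $35$ and divisor the sum of all mirrors that occur; comparing with the Borcherds-product description of Igusa's weight-$35$ cusp form shows the two coincide up to a scalar.

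The step I expect to be the main obstacle is the classification underlying the count: controlling the obstruction space $S_{5/2}(\rho_M)$ well enough to confirm that precisely the $70$ enumerated principal parts extend to holomorphic inputs — above all, making explicit which order-$2$ pairs $\{v,w\}$ do so, that is, the dictionary between $S$ and genus-$2$ theta characteristics and the evenness condition selecting $10$ of the $\binom{16}{2}$ possibilities. The remaining pieces — the Eisenstein-coefficient estimate, and the divisor matchings identifying $\Phi_{5,A_1}$ and Igusa's weight-$35$ form — are then routine.
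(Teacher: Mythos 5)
Your overall strategy --- classifying the singular-weight products through the coefficients of the weight $5/2$ Eisenstein series and the obstruction pairing, then finishing with a finite computation --- is the same as the paper's, which simply cites the detailed execution of exactly this computation in Section 4 of \cite{OS19}. Your coset count ($136$ classes of norm $1/4$, of which $16$ have order $2$ and the remaining $120$ have order $4$ and form $60$ pairs $\{u,-u\}$) is correct; however, your parenthetical claim that every such class has nonzero $A_1$-component is false --- with the Gram matrix of Appendix A the coset of $\Theta_2$, namely $(1/4,0,0,0,1/4)$, has norm $1/4$ and trivial $A_1$-part, and $48$ of the $136$ classes are of this kind. That slip does not affect the counts.

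The genuine gap is in your explanation of why the Type I inputs have principal part $q^{-1/4}(e_v+e_w)$ with two distinct order-two classes. You assert that $2q^{-1/4}e_v$ is ``killed by the obstruction space,'' but the obstruction pairing is linear: if $q^{-1/4}e_v$ extends to a weakly holomorphic form then so does $2q^{-1/4}e_v$, and conversely, so the obstruction space cannot distinguish them. The actual mechanism, as the paper points out, is integrality: a weakly holomorphic form with principal part $q^{-1/4}e_v$ alone \emph{does} exist, but it has non-integral Fourier coefficients and hence is not a valid Borcherds input; the term $q^{-1/4}e_w$, with $w$ from the other orbit of order-two classes, is added to restore integrality. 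Since integrality of the coefficients $c(\gamma,m)$ is invisible to the Serre-duality argument, your proposed classification cannot by itself produce the correct list of $70$ products (for instance, it does not rule out $2q^{-1/4}e_v$ as a candidate of total pole-coefficient $2$). Relatedly, you never verify that the $e_w$-term contributes nothing to the divisor (because the coset $w+M$ contains no vectors of norm $1/2$); without this, the identification $\prod_i\theta_i=\Phi_{5,A_1}$ in part (2) does not follow, as the product of the ten Type I forms would a priori acquire extra divisor components. Finally, your uniform value $b(\gamma,\tfrac14)=\tfrac12$ for the Eisenstein coefficients on all of $S$ should be checked against the table in \cite{OS19}: the very fact that the order-two classes split into two orbits playing different roles suggests the coefficients are not uniform there, and the subsequent deduction that the principal part must have total coefficient exactly $2$ on $S$ depends on those values.
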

\begin{proof} These facts can be read off of the Fourier coefficients of a weight 5/2 Eisenstein series. This is carried out in detail in section 4 of \cite{OS19}. With respect to the table of section 4 of \cite{OS19}, the principal parts of the input into the Type I products consist of one of ten vectors $v$ from the 7th orbit and one of six vectors $w$ from the 8th orbit. The vector-valued modular forms with principal part $q^{-1/4}e_v$ alone have non-integral Fourier coefficients so they are, strictly speaking, not valid inputs into the Borcherds lift. However, the $q^{-1/4}e_w$ term does not contribute to their divisor.
\end{proof}

Let $\Orth_1(2U(4)\oplus A_1)$ be the subgroup of $\Orth^+(2U(4)\oplus A_1)$ generated by all reflections associated to the divisor of $\Phi_{5,A_1}$. It is obvious that $\Orth_1(2U(4)\oplus A_1)$ is contained in the discriminant kernel $\widetilde{\Orth}^+(2U(4)\oplus A_1)$. 
The 70 products satisfy the following basic properties.
\begin{lemma}\label{lem:2U4A1products2}
\noindent
\begin{enumerate}
\item The 10 Type I products $\theta_i$ are linearly independent over $\CC$. 
\item The 55 forms $\theta_i\theta_j$ are linearly independent over $\CC$.
\item The 60 Type II products are modular forms of weight 1/2 for $\Orth_1(2U(4)\oplus A_1)$, all of which have the same multiplier system which we denote $v_\Theta$. Moreover, $v_\Theta=1$ for all reflections in $\Orth_1(2U(4)\oplus A_1)$.
\end{enumerate}
\end{lemma}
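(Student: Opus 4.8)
The plan is to handle all three parts at once by means of two standard facts about the Borcherds lift. First, each of the seventy singular-weight products is a holomorphic modular form of weight $1/2$ for the discriminant kernel $\widetilde{\Orth}^+(2U(4)\oplus A_1)$ with a multiplier system of finite order (Borcherds \cite{Bor98}); in particular it is modular for the subgroup $\Orth_1(2U(4)\oplus A_1)$. Second, if $\sigma_r$ is a reflection lying in $\widetilde{\Orth}^+(M)$ and $\Psi$ is such a product, then $\Psi\circ\sigma_r=(-1)^{m_r}\Psi$ where $m_r$ is the multiplicity of $\cD_r(M)$ in $\div(\Psi)$; this follows from the infinite product expansion, which $\sigma_r$ merely rearranges, and in particular $\Psi\circ\sigma_r=\Psi$ whenever $\cD_r(M)\not\subseteq\div(\Psi)$. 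I would begin by recording the divisors implied by Lemma \ref{lem:2U4A1products}: a Type I product $\theta_i$ has $\div(\theta_i)=\sum_{\lambda\equiv\pm v_i,\ (\lambda,\lambda)=1/2}\cD_\lambda(M)$ for one of ten pairwise non-proportional classes $v_i$ of order $2$, each of which is the mirror class of some reflection in $\Orth_1$ (because $\div(\Phi_{5,A_1})$ contains all primitive vectors of norm $1/2$ and order $2$), while a Type II product has divisor supported on $\cD_\lambda(M)$ with $\lambda$ of order $4$. Finally I would check on $M^\vee/M$ that $\sigma_u\notin\widetilde{\Orth}^+(M)$ for any $u$ of order $4$; hence no mirror of a reflection of $\widetilde{\Orth}^+(M)$, in particular of $\Orth_1$, can occur in the divisor of a Type II product.

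For part (3): each Type II product $\Psi$ is modular of weight $1/2$ for $\Orth_1$ by the first fact. For every reflection $\sigma_r\in\Orth_1$ we have $\cD_r(M)\not\subseteq\div(\Psi)$ by the previous paragraph, so $\Psi\circ\sigma_r=\Psi$ by the second fact, which is precisely the assertion $v_\Theta(\sigma_r)=1$. Moreover, for any two Type II products $\Psi,\Psi'$ the quotient $\Psi'/\Psi$ is then invariant under every reflection generating $\Orth_1$, hence is an $\Orth_1$-invariant meromorphic function of weight $0$; comparing the two transformation laws forces $\Psi$ and $\Psi'$ to have the same multiplier system on $\Orth_1$, which we name $v_\Theta$.

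For parts (1) and (2): the reflection formula shows that, restricted to $\Orth_1$, the multiplier of $\theta_i$ sends a reflection $\sigma_r$ to $-1$ exactly when the class of $r$ equals $\pm v_i$. Since the $v_i$ are pairwise non-proportional and each is a mirror class of $\Orth_1$, the ten forms $\theta_i$ carry pairwise distinct multiplier systems on $\Orth_1$, and modular forms of the same weight with distinct multiplier systems are linearly independent; this gives part (1). For part (2), the multiplier of $\theta_i\theta_j$ on $\Orth_1$ sends $\sigma_r$ to $-1$ precisely when the class of $r$ lies in $\{\pm v_i,\pm v_j\}$ with odd multiplicity. Hence the $45$ mixed products $\theta_i\theta_j$ with $i<j$ carry pairwise distinct non-trivial multiplier systems, indexed by the $2$-subsets $\{v_i,v_j\}$; the ten squares $\theta_i^2$ all carry the trivial multiplier; and a square never shares a multiplier with a mixed product. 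Decomposing $\mathrm{span}(\theta_i\theta_j)$ into multiplier eigenspaces, linear independence of all $55$ products reduces to linear independence of the ten squares $\theta_1^2,\dots,\theta_{10}^2$ inside the trivial-multiplier eigenspace, which is the classical statement that the ten even genus-two theta constants have linearly independent squares, and in any case can be verified directly from the $q$-expansions in the ancillary files.

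I expect the main obstacle to be pinning down the reflection formula $\Psi\circ\sigma_r=(-1)^{m_r}\Psi$ with the correct normalization of the multiplier system of a Borcherds product — in particular justifying that when $\cD_r(M)$ is disjoint from $\div(\Psi)$ the product is genuinely $\sigma_r$-symmetric, so the constant is $+1$ and not merely an unknown sign. The remaining ingredients, namely the arithmetic of $M^\vee/M$ (non-proportionality of the $v_i$, and $\sigma_u\notin\widetilde{\Orth}^+(M)$ for order-$4$ vectors $u$) and the linear independence of the ten squares $\theta_i^2$, are routine.
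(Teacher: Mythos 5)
Your proposal is correct and follows essentially the same route as the paper: linear independence of the $\theta_i$ (and of the $\theta_i\theta_j$, after reducing to the ten squares checked via Fourier expansions) via the sign $(-1)^{m_r}$ with which a reflection $\sigma_r\in\Orth_1$ acts, and part (3) via the weight-zero quotient of two Type II products together with the observation that no Type II divisor is a mirror of a reflection in $\Orth_1$. The only difference is presentational — you isolate the reflection formula $\Psi\circ\sigma_r=(-1)^{m_r}\Psi$ as an explicit lemma, which the paper uses implicitly.
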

\begin{proof} (1) The Type I products are modular forms on $\widetilde{\Orth}^+(2U(4)\oplus A_1)$. Let $\sigma_i$ be a reflection associated to the divisor of some $\theta_i$. Since $\sigma_i\in \widetilde{\Orth}^+(2U(4)\oplus A_1)$, we obtain $\sigma_i(\theta_i)=-\theta_i$ and $\sigma_i(\theta_j)=\theta_j$ for $j\neq i$. This implies the linear independence of the 10 products.

(2) Using the argument in (1), the linear independence of the 55 functions $\theta_i\theta_j$ reduces to the linear independence of the 10 squares $\theta_i^2$, which can be shown by computing Fourier coefficients.

(3) Consider the quotient of any two products of type II. This is a weight zero meromorphic modular form on $\Orth_1(2U(4)\oplus A_1)$. Since it does not vanish on any divisors contained in $\mathrm{div}\, \Phi_{5, A_1}$, it has trivial character on $\Orth_1(2U(4) \oplus A_1)$, so the type II products have the same multiplier system. From the principal parts of their input forms we see that no type II product vanishes on the divisor $\cD_r$ for any reflection $\sigma_r \in \Orth_1(2U(4) \oplus A_1)$, which forces $v_{\Theta}(\sigma_r) = 1$. We remark that $v_\Theta^2=1$ on $\Orth_1(2U(4)\oplus A_1)$.
\end{proof}

Throughout this section we will consider sets of four type II products $\Theta_1,\Theta_2,\Theta_3,\Theta_4$ that transform under larger modular groups than $\Orth_1(2U(4) \oplus A_1)$. For this the following definition is useful.

\begin{definition}
A \emph{$*$-set (of type $2U(4)\oplus A_1$)} is a set of four Type II products, each of whose input forms is invariant under all reflections associated to the divisors of the three other products.
\end{definition}

In other words, if the type II product $\Theta_i$ has input with principal part $q^{-1/4}(e_{u_i} + e_{-u_i})$ then $\{\Theta_1, \Theta_2, \Theta_3, \Theta_4\}$ forms a $*$-set if and only if $\sigma_{u_i}(u_j) \in \pm u_j + (2U(4) \oplus A_1)$ for $1 \le i, j \le 4$.

$*$-sets have several interesting properties.
\begin{lemma}\label{lem:*-sets}
\noindent
\begin{enumerate}
\item There are exactly 105 $*$-sets.
\item Every product of type II belongs to exactly seven $*$-sets.
\item There are $1320$ pairs of type II products that do not belong to a $*$-set; there are $360$ pairs that belong to exactly one $*$-set; and the remaining  $90$ pairs belong to exactly three $*$-sets.
\item Any four type II products that form a $*$-set are linearly independent over $\CC$.
\end{enumerate}
\end{lemma}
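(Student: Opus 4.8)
**Proof proposal for Lemma \ref{lem:*-sets}.**

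The plan is to make the combinatorics completely explicit by passing to the discriminant form. The lattice $2U(4) \oplus A_1$ has discriminant group $D = (\ZZ/4)^4 \oplus \ZZ/2$, and the Type II products are indexed by pairs $\{u, -u\}$ of order-$4$ vectors $u \in D$; concretely $u$ has components $(a_1,b_1,a_2,b_2,c)$ with each $U(4)$-block contributing $(a_i, b_i) \in (\ZZ/4)^2$ and the norm of $u$ in $\QQ/2\ZZ$ being $\tfrac14(a_1 b_1 + a_2 b_2) + \tfrac14 c^2$. The condition for a product to be of Type II (principal part $q^{-1/4}(e_u + e_{-u})$ with $u$ of order $4$) pins down the norm class, and counting these pairs should recover the $60$ of Lemma \ref{lem:2U4A1products}(3). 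The $*$-set condition, as spelled out in the remark after the definition, is that $\sigma_{u_i}(u_j) \in \pm u_j + (2U(4) \oplus A_1)$ for all $i, j$, i.e. that the reflections $\sigma_{u_i}$ and $\sigma_{u_j}$ commute on $D$ up to sign, equivalently that $\tfrac{2(u_i, u_j)}{(u_i, u_i)} u_i \in \pm u_j + M$ together with the symmetric statement. Since $(u_i, u_i) = \tfrac12$ for a Type II vector, this reads $4(u_i, u_j) u_i \equiv \pm u_j \pmod M$; I would first translate this into a clean pairwise relation on the labels in $D$, likely amounting to "$u_i, u_j$ span an isotropic-ish rank-two piece on which both reflections act diagonally."

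With the pairwise relation in hand, parts (1)--(3) become a finite check that I would organize as a graph-theoretic computation (and indeed the introduction notes that SAGE was used for "certain graph computations"). Form the graph $G$ on the $60$ Type II products whose edges are the $*$-compatible pairs; a $*$-set is then a $4$-clique of $G$ all of whose pairs are mutually compatible, but one must check that compatibility is exactly the clique condition — i.e. that pairwise $*$-compatibility of four products already forces them to form a $*$-set, so that $*$-sets are precisely the $4$-cliques of $G$. I would verify this either by the explicit description of $D$ or by direct enumeration. Then: counting $4$-cliques gives the $105$ of (1); the vertex-transitivity of $\Orth^+(M)$ acting on Type II products (all are $\Orth^+$-conjugate, since they have the same input orbit) gives that every vertex lies in the same number of cliques, namely $105 \cdot 4 / 60 = 7$, proving (2); and sorting the $\binom{60}{2} = 1770$ pairs by how many $4$-cliques contain them yields the partition $1770 = 1320 + 360 + 90$ with clique-multiplicities $0, 1, 3$ in (3). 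Each of these is a bookkeeping statement once the edge relation is identified, and the numbers $1320 + 360 + 90 = 1770$ and $360 \cdot 1 + 90 \cdot 3 = 630 = \binom{4}{2} \cdot 105$ provide consistency checks.

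For part (4), linear independence, I would argue exactly as in the proofs of Lemma \ref{lem:2U4A1products2}(1) and Lemma \ref{lem:*-sets}(2)-style reasoning is not available here since the $\Theta_i$ do not all lie in $\widetilde{\Orth}^+(M)$; instead, suppose $\sum_{i=1}^4 c_i \Theta_i = 0$. The defining property of a $*$-set is that each reflection $\sigma_{u_i}$ associated to $\mathrm{div}\,\Theta_i$ fixes the input forms of the other three products, hence fixes $\Theta_j$ for $j \ne i$ (up to the common multiplier $v_\Theta$, which by Lemma \ref{lem:2U4A1products2}(3) is trivial on all reflections in $\Orth_1$, and $\sigma_{u_i}$ lies in $\Orth_1$), while $\sigma_{u_i}(\Theta_i) = -\Theta_i$ because $\Theta_i$ vanishes to order one on the mirror of $\sigma_{u_i}$. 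Applying $\sigma_{u_i}$ to the relation and subtracting gives $2 c_i \Theta_i = 0$, so $c_i = 0$. The main obstacle I anticipate is the first step: extracting from the sign-ambiguous condition $\sigma_{u_i}(u_j) \in \pm u_j + M$ a genuinely usable pairwise criterion on the discriminant-group labels, and being careful that "pairwise compatible" really does coincide with "forms a $*$-set" rather than being a strictly weaker condition — a subtlety that the jump between (2) and (3) (seven cliques per vertex, but pairs in $0$, $1$, or $3$ cliques) shows is nontrivial. Everything after that is a finite, machine-checkable enumeration.
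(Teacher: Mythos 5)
Your approach is essentially the paper's: parts (1)--(3) are done there exactly as you propose, by building the graph on the $60$ Type II products with edges given by mutual invariance of the input forms and enumerating cliques by computer (this graph is displayed in Appendix A), and part (4) is proved by the same reflection-eigenvector argument. Two remarks. First, your worry about whether ``pairwise compatible'' might be weaker than ``forms a $*$-set'' is moot: the definition of a $*$-set is itself the pairwise condition $\sigma_{u_i}(u_j) \in \pm u_j + M$ for all $i,j$, so $*$-sets are by definition the $4$-cliques; the only computational content is that the maximal cliques have size exactly $4$. Second, and more importantly, your justification of (4) contains a false assertion: the reflections $\sigma_{u_i}$ with $u_i$ of order $4$ do \emph{not} lie in $\Orth_1(2U(4)\oplus A_1)$, which is generated only by the reflections in the divisor of $\Phi_{5,A_1}$ (order-$2$ cosets); indeed the whole point of Theorem \ref{th:Perna} is that adjoining the $\sigma_{u_i}$ produces the strictly larger group $\Orth_2$. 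So you cannot invoke Lemma \ref{lem:2U4A1products2}(3) to conclude $\sigma_{u_i}(\Theta_j)=\Theta_j$. The repair is the one the paper uses: pass to the group $\Gamma_*$ generated by $\Orth_1(2U(4)\oplus A_1)$ together with $\sigma_{u_1},\dots,\sigma_{u_4}$, on which all four $\Theta_j$ are modular with some multiplier because their inputs are $\sigma_{u_i}$-invariant; since $\sigma_{u_i}$ has order two the multiplier value is $\pm 1$, and it must be $+1$ for $j\neq i$ because $\Theta_j$ does not vanish on the mirror of $\sigma_{u_i}$ (a value $-1$ would force vanishing there), while it is $-1$ for $j=i$ because $\Theta_i$ vanishes to first order on that mirror. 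With that correction your argument for (4) is exactly the paper's.
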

\begin{proof}
Parts (1)-(3) were checked by computer. We constructed a graph with the Type II products as vertices and an edge between two products if and only if their input forms are invariant under the reflections associated to each other's divisors. (See Appendix A for an image.) The $*$-sets are the maximal cliques in this graph.

(4) Let $f_j$, $1\leq j \leq 4$, be a $*$-set and let $\Gamma_*$ be the group generated by $\Orth_1(2U(4)\oplus A_1)$ and the reflections associated to the divisors of the four products, such that these products are modular forms on $\Gamma_*$. Let $\sigma_i$ be a reflection associated to the divisor of $f_i$. Then $\sigma_i(f_i)=-f_i$ and $\sigma_i(f_j)=f_j$ for $j\neq i$, which implies the linear independence.
\end{proof}

The extra structure given by the 90 pairs of type II products in part (3) will be useful later in this section.

\begin{lemma}\label{lem:2U4A1jacobian} There is a $*$-set $\{\Theta_1,\Theta_2,\Theta_3,\Theta_4\}$ for which the Jacobian $J(\Theta_1, \Theta_2, \Theta_3, \Theta_4)$ equals Igusa's cusp form $\Phi_{5, A_1}$ up to a nonzero constant multiple.
\end{lemma}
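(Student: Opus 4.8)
The plan is to produce one explicit $*$-set $\{\Theta_1,\Theta_2,\Theta_3,\Theta_4\}$ whose Jacobian has the desired divisor, and then invoke the uniqueness of reflective forms to identify it with $\Phi_{5,A_1}$. First I would fix a specific $*$-set: using the graph of Type II products from Lemma \ref{lem:*-sets}, pick four vectors $u_1,u_2,u_3,u_4$ of order $4$ in $M^\vee/M$ (for $M = 2U(4)\oplus A_1$) whose associated Type II products form a maximal clique, so that $\sigma_{u_i}(u_j)\in \pm u_j + M$ for all $i,j$. Let $\Gamma_*$ be the group generated by $\Orth_1(2U(4)\oplus A_1)$ together with the four reflections $\sigma_{u_i}$; by the $*$-set property each $\Theta_i$ is a modular form of weight $1/2$ on $\Gamma_*$ with multiplier system $v_\Theta$. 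By Lemma \ref{lem:2U4A1products2}(3) and Lemma \ref{lem:*-sets}(4) the four products are linearly independent, hence algebraically independent over $\CC$ as soon as we check the Jacobian does not vanish identically; so by Theorem \ref{th:Jacobian} the Jacobian $J := J(\Theta_1,\Theta_2,\Theta_3,\Theta_4)$ is a nonzero cusp form of weight $3 + 4\cdot\tfrac12 = 5$ on $\Gamma_*$ with multiplier system $v_\Theta^{10}\det = \det$ (using $v_\Theta^2 = 1$).

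Next I would pin down the divisor of $J$. By Theorem \ref{th:Jacobian}(3), $J$ vanishes on the mirror $\cD_r$ of every reflection $\sigma_r \in \Gamma_*$, in particular on every mirror in $\operatorname{div}\Phi_{5,A_1}$ (the $2$-reflections in $\widetilde{\Orth}^+(2U(4)\oplus A_1)$, equivalently the $\cD_s$ for primitive $s$ with $(s,s)=\tfrac12$, $\ord(s)=2$). This already gives $\Phi_{5,A_1} \mid J$ as divisors, and since both have weight $5$, the quotient $J/\Phi_{5,A_1}$ is a holomorphic modular form of weight $0$, hence constant — provided I can rule out extra vanishing of $J$ coming from the four reflections $\sigma_{u_i}$ themselves. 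The key point is that each $\sigma_{u_i} \notin \widetilde{\Orth}^+(2U(4)\oplus A_1)$ acts nontrivially on the discriminant form, and the computation of $\sigma_{u_i}$ on the $e_{u_j}$-components shows that the input forms of the $\Theta_j$ are genuinely invariant (not anti-invariant) under $\sigma_{u_i}$ for $j \ne i$, while $\Theta_i$ itself is anti-invariant. Thus along $\cD_{u_i}$ the Jacobian does not acquire a zero: more precisely, the standard argument (as in Lemma \ref{lem:2U4A1products2}) shows that $\sigma_{u_i}$ sends $J$ to $-J$ via the $\det$ character, consistent with $J$ not vanishing on $\cD_{u_i}$, because $\Phi_{5,A_1}$ already transforms the same way and $J/\Phi_{5,A_1}$ is $\sigma_{u_i}$-invariant of weight $0$.

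The cleanest route for the last step, which I would actually carry out, is a direct Fourier-coefficient comparison: compute the leading Fourier–Jacobi coefficient (or the first few Fourier coefficients at the chosen $0$-dimensional cusp) of both $J(\Theta_1,\Theta_2,\Theta_3,\Theta_4)$ and $\Phi_{5,A_1}$ from the known product expansions of the $\Theta_i$ and of $\Phi_{5,A_1}$, and check that one is a constant multiple of the other; since we already know $J$ is a weight-$5$ cusp form with character $\det$ and divisor $\supseteq \operatorname{div}\Phi_{5,A_1}$, a single nonzero matching coefficient forces $J = c\,\Phi_{5,A_1}$ for some $c \in \CC^*$. I expect the main obstacle to be the bookkeeping in choosing and verifying the explicit $*$-set and in showing that $J$ has \emph{no} divisor beyond $\operatorname{div}\Phi_{5,A_1}$ — i.e. that the four extra reflection mirrors $\cD_{u_i}$ do not lie in $\operatorname{div} J$. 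This is ultimately a finite check on discriminant-form data together with the Fourier-coefficient computation, both of which can be done in SAGE as elsewhere in the paper; none of it is conceptually deep, but it requires care to set up the lattice $2U(4)\oplus A_1$, its discriminant form, and the action of the $\sigma_{u_i}$ correctly.
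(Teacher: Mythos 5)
Your proposal follows essentially the same route as the paper: fix an explicit $*$-set, verify by Fourier expansion that the Jacobian is not identically zero, use Theorem \ref{th:Jacobian} to see that $J$ vanishes on all mirrors of reflections in $\Orth_1(2U(4)\oplus A_1)$ (i.e.\ on $\operatorname{div}\Phi_{5,A_1}$), and conclude that $J/\Phi_{5,A_1}$ is holomorphic of weight $0$, hence a nonzero constant. Two small remarks: the paper works throughout on $\Orth_1(2U(4)\oplus A_1)$, where all four products genuinely share the multiplier system $v_\Theta$ and $v_\Theta=1$ on every reflection, whereas on your larger group $\Gamma_*$ the four products do \emph{not} all have multiplier system $v_\Theta$ (each $\Theta_i$ is anti-invariant under $\sigma_{u_i}$, as you yourself note a paragraph later), so your first paragraph contains an internal inconsistency that working on $\Orth_1$ avoids. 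Also, your effort to rule out extra vanishing along the $\cD_{u_i}$ is redundant: once $J\neq 0$, $\operatorname{div}J\geq\operatorname{div}\Phi_{5,A_1}$, and the weights agree, the quotient is a weight-$0$ holomorphic form, hence a nonzero constant, which already forces $\operatorname{div}J=\operatorname{div}\Phi_{5,A_1}$ exactly.
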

\begin{proof} In the notation of Appendix A we took the products labelled $\Theta_1,\Theta_2,\Theta_{26},\Theta_{55}$. By computing their Fourier expansions it is straightforward to show that their Jacobian is not identically zero. Since these products are modular forms of weight $1/2$ and multiplier system $v_\Theta$ on $\Orth_1(2U(4)\oplus A_1)$, and $v_\Theta=1$ for all reflections in $\Orth_1(2U(4)\oplus A_1)$, we conclude from Theorem \ref{th:Jacobian} (4) that their Jacobian $J$ vanishes on the mirrors of all reflections in $\Orth_1(2U(4)\oplus A_1)$. It follows that $J/\Phi_{5,A_1}$ is a holomorphic modular form of weight $0$ and therefore constant. 
\end{proof}

By applying Theorem \ref{th:criterion}, we obtain Runge's theorem in the context of orthogonal groups.
\begin{theorem}\label{th:Runge} The type II products span a four-dimensional space over $\CC$. If $\Theta_1,\Theta_2,\Theta_3,\Theta_4$ are any linearly independent Type II products, then they are algebraically independent and generate the algebra of modular forms:
\begin{align*}
M_*(\Orth_1(2U(4)\oplus A_1),v_\Theta) &=\CC[\Theta_1,\Theta_2,\Theta_3,\Theta_4],\\
(\cD_{3}/ \Orth_1(2U(4)\oplus A_1))^* &\cong \PP^3(\CC).
\end{align*}
\end{theorem}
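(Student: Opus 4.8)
The plan is to apply Theorem~\ref{th:criterion} to the group $\Gamma = \Orth_1(2U(4)\oplus A_1)$ with multiplier system $v = v_\Theta$ and the four modular forms $f_1,f_2,f_3,f_4$ taken to be any choice of linearly independent Type II products $\Theta_1,\ldots,\Theta_4$. The essential input is already in place: by Lemma~\ref{lem:2U4A1products2}(3) the Type II products are weight $1/2$ modular forms for $\Gamma$ with a common multiplier system $v_\Theta$ satisfying $v_\Theta(\sigma_r)=1$ for every reflection $\sigma_r$ whose mirror lies in $\div\,\Phi_{5,A_1}$; by Lemma~\ref{lem:2U4A1products}, these mirrors are exactly the mirrors of the reflections generating $\Gamma$, i.e.\ the $\cD_r$ for primitive $r$ with $(r,r)=1/2$ and $\ord(r)=2$. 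So Theorem~\ref{th:criterion} reduces the whole statement to showing that the Jacobian $J(\Theta_1,\Theta_2,\Theta_3,\Theta_4)$ vanishes exactly on these mirrors, each with multiplicity one.

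First I would handle one distinguished quadruple. Lemma~\ref{lem:2U4A1jacobian} gives a $*$-set $\{\Theta_1,\Theta_2,\Theta_{26},\Theta_{55}\}$ whose Jacobian equals $\Phi_{5,A_1}$ up to a nonzero scalar, and $\Phi_{5,A_1}$ has divisor equal to the sum of all the mirrors in question with multiplicity one. Hence Theorem~\ref{th:criterion} applies verbatim to this quadruple and yields $M_*(\Gamma,v_\Theta)=\CC[\Theta_1,\Theta_2,\Theta_{26},\Theta_{55}]$, which in particular proves that $M_*(\Gamma,v_\Theta)$ is a free polynomial algebra on four generators of weight $1/2$. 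Then $\Proj(M_*(\Gamma,v_\Theta))\cong\PP^3(\CC)$, and by the Veronese invariance recorded at the end of \S\ref{sec:preliminaries} we get $(\cD_3/\Gamma)^*=\Proj(M_*(\Gamma))=\Proj(M_*(\Gamma,v_\Theta))\cong\PP^3(\CC)$.

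It remains to remove the dependence on the particular $*$-set, i.e.\ to show that \emph{any} four linearly independent Type II products generate. Once we know the algebra is freely generated by \emph{some} four forms of weight $1/2$, the graded piece $M_{1/2}(\Gamma,v_\Theta)$ is exactly $4$-dimensional; since the Type II products all lie in this space and span a space of dimension $\geq 4$ (as the $*$-set of Lemma~\ref{lem:2U4A1jacobian} is linearly independent by Lemma~\ref{lem:*-sets}(4)), they span precisely a $4$-dimensional space, which is all of $M_{1/2}(\Gamma,v_\Theta)$. Any four linearly independent vectors in a $4$-dimensional space are a basis, hence are a new system of generators of the free algebra, hence are algebraically independent. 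This gives both displayed equalities for an arbitrary linearly independent quadruple.

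The main obstacle is really the non-vanishing of the Jacobian of the specific $*$-set, which is the content of Lemma~\ref{lem:2U4A1jacobian} and is verified there by an explicit Fourier expansion computation; everything downstream is then bookkeeping with dimensions and the Veronese identity. A secondary subtlety is the half-integral weight version of the Jacobian criterion, but this is exactly Theorem~\ref{th:criterion}, whose proof is given above, so no further work is needed. One should only take care that the constructed forms have the stated multiplier behavior on all of $\Gamma$ and not merely on $\widetilde{\Orth}^+(2U(4)\oplus A_1)$ --- but this is guaranteed by Lemma~\ref{lem:2U4A1products2}(3), so the argument goes through cleanly.
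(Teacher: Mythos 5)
Your proposal is correct and follows essentially the same route as the paper: apply the half-integral-weight Jacobian criterion (Theorem \ref{th:criterion}) to the distinguished $*$-set of Lemma \ref{lem:2U4A1jacobian}, whose Jacobian is $\Phi_{5,A_1}$ and hence vanishes exactly on the mirrors of the reflections generating $\Orth_1(2U(4)\oplus A_1)$ with multiplicity one, then pass to arbitrary linearly independent quadruples by the dimension count in weight $1/2$ and to $\PP^3(\CC)$ via the Veronese identity. No gaps.
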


Perna's theorem can be phrased in terms of a larger modular group associated to a $*$-set.
\begin{theorem}\label{th:Perna}
Let $\{\Theta_1,\Theta_2,\Theta_3,\Theta_4\}$ be a $*$-set and let $\Orth_2(2U(4)\oplus A_1)$ be the subgroup generated by $\Orth_1(2U(4)\oplus A_1)$ and the reflections with mirrors in the divisor of $\prod_{j=1}^4\Theta_j$. Then
\begin{align*}
M_*(\Orth_2(2U(4)\oplus A_1)) &=\CC[\Theta_1^2,\Theta_2^2,\Theta_3^2,\Theta_4^2],\\
(\cD_{3}/ \Orth_2(2U(4)\oplus A_1))^* &\cong \PP^3(\CC).
\end{align*}
\end{theorem}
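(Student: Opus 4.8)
The plan is to apply Theorem \ref{th:converseJacobian} to the group $\Gamma=\Orth_2(2U(4)\oplus A_1)$ and the four weight-$1$ forms $f_i=\Theta_i^2$; note that $\Orth_2$ has finite index in $\Orth^+(2U(4)\oplus A_1)$ since it contains $\Orth_1$. First I would verify that each $\Theta_i^2$ is a modular form of \emph{trivial} character on $\Orth_2$ (and holomorphic, being a square of a Borcherds product). By the proof of Lemma \ref{lem:*-sets}(4), a reflection $\sigma$ whose mirror lies in $\mathrm{div}\,\Theta_k$ satisfies $\sigma(\Theta_k)=-\Theta_k$ and $\sigma(\Theta_j)=\Theta_j$ for $j\neq k$, so $\sigma(\Theta_i^2)=\Theta_i^2$ for every $i$; combined with $v_\Theta^2=1$ on $\Orth_1$ (Lemma \ref{lem:2U4A1products2}(3)) and the fact that $\Orth_2$ is generated by $\Orth_1$ together with these reflections, this gives $\Theta_i^2\in M_1(\Orth_2)$ with trivial character.

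Next I would compute the Jacobian. Since all four $\Theta_i$ have the same weight $1/2$, the chain rule scales the $i$-th column of the Jacobian matrix by $2\Theta_i$, so
$$J(\Theta_1^2,\Theta_2^2,\Theta_3^2,\Theta_4^2)=16\,\Theta_1\Theta_2\Theta_3\Theta_4\cdot J(\Theta_1,\Theta_2,\Theta_3,\Theta_4).$$
To identify $J(\Theta_1,\dots,\Theta_4)$: by Theorem \ref{th:Runge} the type II products span a $4$-dimensional space, so the $\Theta_i$ of our $*$-set — which are linearly independent by Lemma \ref{lem:*-sets}(4) — are obtained from the four products of Lemma \ref{lem:2U4A1jacobian} by an invertible linear substitution, and the Jacobian of equal-weight forms transforms by the determinant of that substitution; hence $J(\Theta_1,\dots,\Theta_4)=c\,\Phi_{5,A_1}$ with $c\neq0$. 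Therefore
$$\mathrm{div}\,J(\Theta_1^2,\Theta_2^2,\Theta_3^2,\Theta_4^2)=\mathrm{div}\,\Phi_{5,A_1}+\sum_{i=1}^{4}\mathrm{div}\,\Theta_i.$$

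It then remains to check that this divisor is exactly the sum of all mirrors of reflections in $\Orth_2$, each with multiplicity one, so that Theorem \ref{th:converseJacobian} applies. Reading off the input principal parts, $\mathrm{div}\,\Phi_{5,A_1}$ is the multiplicity-one sum of the mirrors $\cD_s$ of order-$2$ vectors $s$ with $(s,s)=1/2$, which by definition are the mirrors of reflections in $\Orth_1$; and each $\mathrm{div}\,\Theta_i$ is the multiplicity-one sum of mirrors $\cD_\lambda$ of order-$4$ vectors $\lambda\equiv\pm u_i\pmod{M}$ with $(\lambda,\lambda)=1/2$, which are mirrors of the reflections adjoined to form $\Orth_2$. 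Since all these vectors have norm $1/2$, two of the mirrors coincide only when the vectors agree up to sign; as order-$2$ and order-$4$ vectors cannot, and distinct type II products of a $*$-set give distinct cosets $\pm u_i$, all the prime divisors involved are distinct. Hence $\mathrm{div}\,J$ is multiplicity-free, its components are mirrors of reflections in $\Orth_2$, and these reflections generate $\Orth_2$. Conversely, since each $\Theta_i^2$ has trivial character, $J$ carries the character $\det$ (Theorem \ref{th:Jacobian}(1)), so it changes sign under every reflection in $\Orth_2$ and therefore vanishes along every such mirror; no mirror is missing. Theorem \ref{th:converseJacobian} now gives $M_*(\Orth_2(2U(4)\oplus A_1))=\CC[\Theta_1^2,\Theta_2^2,\Theta_3^2,\Theta_4^2]$, and as all four generators have weight $1$, $(\cD_3/\Orth_2(2U(4)\oplus A_1))^*=\Proj\CC[\Theta_1^2,\Theta_2^2,\Theta_3^2,\Theta_4^2]\cong\PP^3(\CC)$.

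The one step that needs genuine care is this last divisor bookkeeping: checking that $\mathrm{div}\,\Phi_{5,A_1}$ and the $\mathrm{div}\,\Theta_i$ are multiplicity-free with no common irreducible component — which reduces to comparing orders and cosets in the discriminant group $M^\vee/M$ — and, on the conceptual side, confirming via the trivial-character property that these components already exhaust the mirrors of reflections in $\Orth_2$. Everything else is formal once this is in place.
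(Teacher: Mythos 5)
Your proposal is correct and follows essentially the same route as the paper: establish that the $\Theta_i^2$ have trivial character on $\Orth_2(2U(4)\oplus A_1)$, identify the Jacobian of the squares with $\Phi_{5,A_1}\prod_{j=1}^4\Theta_j$ via Lemma \ref{lem:2U4A1jacobian}, and invoke Theorem \ref{th:converseJacobian}. The only difference is that you supply details the paper leaves implicit — in particular the linear-substitution argument (via Theorem \ref{th:Runge}) passing from the specific $*$-set of Lemma \ref{lem:2U4A1jacobian} to an arbitrary one, and the explicit multiplicity-one divisor bookkeeping — which is a welcome, not a divergent, elaboration.
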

\begin{proof}
By definition of $*$-sets, the forms $\Theta_j^2$ are modular forms of weight $1$ with trivial character on $\Orth_2(2U(4)\oplus A_1)$. By Lemma \ref{lem:2U4A1jacobian} their Jacobian equals $\Phi_{5,A_1}\prod_{j=1}^4\Theta_j$ up to a non-zero multiple. By Theorem \ref{th:converseJacobian}, the graded algebra of modular forms on $\Orth_2(2U(4)\oplus A_1)$ is freely generated by the four squares.
\end{proof}

As another application, we determine an algebra of modular forms generated by ten theta constants and reprove a well-known theorem of Igusa.

The Jacobian $\Phi_{5,A_1}$ factors as $\prod_{i=1}^{10}\theta_i$ in $M_*(\Orth_1(2U(4)\oplus A_1),v_\Theta)$. By Theorem \ref{th:freeJacobian}, this decomposition determines the character group of $\Orth_1(2U(4)\oplus A_1)$. For $1\leq i\leq 10$, let $\chi_i$ be the character of $\Orth_1(2U(4)\oplus A_1)$ defined as the quotient of the multiplier system of $\theta_i$ by $v_\Theta$. These ten basic characters generate the $1024$ characters of $\Orth_1(2U(4)\oplus A_1)$.
Let $\Orth_1'(2U(4)\oplus A_1)$ be the commutator subgroup of $\Orth_1(2U(4)\oplus A_1)$. Then we have
\begin{align*}
M_\frac{k}{2}(\Orth_1'(2U(4)\oplus A_1),v_\Theta^k)&=\bigoplus_{\chi }M_\frac{k}{2}(\Orth_1(2U(4)\oplus A_1),v_\Theta^k \chi),\\
M_{\frac{k}{2}}(\Orth_1(2U(4)\oplus A_1),v_\Theta^k\chi_i)&=M_{\frac{k-1}{2}}(\Orth_1(2U(4)\oplus A_1),v_\Theta^{k-1})\theta_i, \quad 1\leq i \leq 10,
\end{align*} 
the first sum taken over the character group of $\Orth_1(2U(4) \oplus A_1)$. 

Since each $\theta_i^2$ lies in $M_1(\Orth_1(2U(4)\oplus A_1))$ and the ten squares $\theta_i^2$ are linearly independent, we conclude from Theorem \ref{th:Runge} that the ten $\theta_i^2$ form a basis of $M_1(\Orth_1(2U(4)\oplus A_1))$.
We define $\Orth_0$ as the subgroup on which the basic characters coincide:
$$
\Orth_0(2U(4)\oplus A_1)=\{ \gamma\in \Orth_1(2U(4)\oplus A_1) : \chi_1(\gamma)=\cdots=\chi_{10}(\gamma) \}.
$$  
The group $\Orth_0(2U(4)\oplus A_1)$ properly contains $\Orth_1'(2U(4)\oplus A_1)$ because $\sigma_1\cdots\sigma_{10}\in \Orth_0(2U(4)\oplus A_1)$ where $\sigma_i$ is any reflection associated to the divisor of $\theta_i$ for $1\leq i\leq 10$.
We note that 
$$
\theta_i\in M_{\frac{1}{2}}(\Orth_1(2U(4)\oplus A_1),v_\Theta \chi_i), \quad 1\leq i \leq 10.
$$
Let $\chi$ be the common restriction of the $\chi_i$ to $\Orth_0(2U(4)\oplus A_1)$. We define a multiplier system $v_\vartheta$ of weight $1/2$ on $\Orth_0(2U(4)\oplus A_1)$ by
$$
v_\vartheta=v_\Theta \chi.
$$
Then
\begin{equation}\label{eq:1}
\theta_i\in M_{\frac{1}{2}}(\Orth_0(2U(4)\oplus A_1),v_\vartheta), \quad 1\leq i \leq 10
\end{equation}
and
\begin{equation}\label{eq:2}
\Theta_j \in M_{\frac{1}{2}}(\Orth_0(2U(4)\oplus A_1),v_\vartheta\chi), \quad 1\leq j \leq 4.
\end{equation}

Since the quotient group $\Orth_0(2U(4)\oplus A_1)/ \Orth_1'(2U(4)\oplus A_1)$ is abelian, we obtain an eigenspace decomposition
\begin{equation}\label{eq:3}
M_\frac{k}{2}(\Orth_1'(2U(4)\oplus A_1),v_\vartheta^k)=\bigoplus_{\epsilon}M_\frac{k}{2}(\Orth_0(2U(4)\oplus A_1),v_\vartheta^k \epsilon),
\end{equation}
the direct sum taken over the characters of $\Orth_0(2U(4)\oplus A_1)/ \Orth_1'(2U(4)\oplus A_1)$.
Note that $v_\vartheta=v_\Theta$ on $\Orth_1'(2U(4)\oplus A_1)$. Therefore, every modular form in $M_\frac{k}{2}(\Orth_1'(2U(4)\oplus A_1),v_\vartheta^k)$ can be decomposed as a sum
$$
P_0+\sum_{i=1}^4 P_i \Theta_i,
$$
where $P_0$ is a modular form of weight $k/2$, and $P_1$, $P_2$, $P_3$, $P_4$ are modular forms of weight $(k-1)/2$, and $$P_0,...,P_4 \in \mathbb{C}[\theta_i, 1 \leq i \leq 10].$$ Combining \eqref{eq:1}, \eqref{eq:2} and \eqref{eq:3}, we obtain
$$
M_*(\Orth_0(2U(4)\oplus A_1),v_\vartheta)=\CC[\theta_i, 1\leq i\leq 10]
$$
and we see that $\chi$ is the unique non-trivial character of $\Orth_2(2U(4)\oplus A_1)/ \Orth_1'(2U(4)\oplus A_1)$. Altogether, we obtain the generators found by Igusa in \cite{Igu64}:
\begin{theorem}\label{th:Igusa}
\begin{align*}
M_*(\Orth_0(2U(4)\oplus A_1),v_\vartheta)&=\CC[\theta_i, 1\leq i\leq 10],\\
M_*(\Orth_1'(2U(4)\oplus A_1),v_\Theta)&=\CC[\Theta_1,\Theta_2,\Theta_3,\Theta_4, \theta_i, 1\leq i\leq 10].
\end{align*}
\end{theorem}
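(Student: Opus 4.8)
The plan is to deduce both equalities from Runge's theorem (Theorem~\ref{th:Runge}), which identifies $M_*(\Orth_1(2U(4)\oplus A_1),v_\Theta)=\CC[\Theta_1,\Theta_2,\Theta_3,\Theta_4]$, by descending along the chain $\Orth_1'\subseteq\Orth_0\subseteq\Orth_1$ and exploiting that the two quotients $\Orth_1/\Orth_1'$ and $\Orth_0/\Orth_1'$ are finite abelian, so that the relevant spaces of modular forms split into character eigenspaces. (I abbreviate $\Orth_j=\Orth_j(2U(4)\oplus A_1)$.) Note that neither equality is a freeness statement — each side has Krull dimension $4$ but more than four generators — so it suffices to show that the listed forms generate the respective graded algebras.

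For the second equality I would start from the fact that $\Orth_1'$ is the commutator subgroup, so $\Orth_1/\Orth_1'$ is the full character group of $\Orth_1$, generated by $\chi_1,\dots,\chi_{10}$, each of order two since $v_\Theta^2=1$ on $\Orth_1$. Hence $M_{k/2}(\Orth_1',v_\Theta^k)=\bigoplus_S M_{k/2}(\Orth_1,v_\Theta^k\chi_S)$, the sum over subsets $S\subseteq\{1,\dots,10\}$, with $\chi_S:=\prod_{i\in S}\chi_i$. The key step is the eigenspace identity $M_{k/2}(\Orth_1,v_\Theta^k\chi_S)=\big(\prod_{i\in S}\theta_i\big)\,M_{(k-|S|)/2}(\Orth_1,v_\Theta^{k-|S|})$, which I would prove by induction on $|S|$ exactly as in the case $|S|=1$ recorded above: for $i\in S$, a form in $M_{k/2}(\Orth_1,v_\Theta^k\chi_S)$ is negated by every reflection whose mirror lies in the divisor of $\theta_i$ (because such a reflection $\sigma$ has $v_\Theta(\sigma)=1$, $\chi_i(\sigma)=-1$ and $\chi_j(\sigma)=1$ for $j\ne i$), so it vanishes on that divisor, and dividing by $\theta_i$ produces a holomorphic form with character $v_\Theta^{k-1}\chi_{S\setminus\{i\}}$. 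Applying Runge's theorem to the factor $M_{(k-|S|)/2}(\Orth_1,v_\Theta^{k-|S|})$, every summand lies in $\CC[\Theta_1,\dots,\Theta_4]\cdot\prod_{i\in S}\theta_i$; summing over $S$ gives $M_*(\Orth_1',v_\Theta)\subseteq\CC[\Theta_1,\dots,\Theta_4,\theta_1,\dots,\theta_{10}]$, and the reverse inclusion is immediate since each $\theta_i$ and $\Theta_j$ has multiplier $v_\Theta$ on $\Orth_1'$.

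For the first equality I would then use that $\Theta_i\Theta_j\in\CC[\theta_1,\dots,\theta_{10}]$: indeed $\Theta_i\Theta_j$ has weight one and trivial character on $\Orth_1$ (again $v_\Theta^2=1$), and the ten squares $\theta_k^2$ form a basis of $M_1(\Orth_1)$. Consequently the subring generated by $\theta_1,\dots,\theta_{10},\Theta_1,\dots,\Theta_4$ is spanned over $\CC[\theta_1,\dots,\theta_{10}]$ by $1,\Theta_1,\dots,\Theta_4$, so every element of $M_{k/2}(\Orth_1',v_\vartheta^k)=M_{k/2}(\Orth_1',v_\Theta^k)$ can be written $P_0+\sum_{i=1}^4 P_i\Theta_i$ with $P_0,P_i\in\CC[\theta_1,\dots,\theta_{10}]$ homogeneous of weights $k/2$ and $(k-1)/2$. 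On $\Orth_0$ the characters $\chi_i$ all restrict to one and the same character $\chi$, which is nontrivial (for instance $\chi(\sigma_1\cdots\sigma_{10})=-1$), and since $v_\vartheta=v_\Theta\chi$ we get $\theta_i\in M_{1/2}(\Orth_0,v_\vartheta)$ and $\Theta_j\in M_{1/2}(\Orth_0,v_\vartheta\chi)$. Now take $f\in M_{k/2}(\Orth_0,v_\vartheta^k)$ and write $f=P_0+\sum_i P_i\Theta_i$ as above: $f$ and $P_0$ have multiplier $v_\vartheta^k$ on $\Orth_0$ whereas $\sum_i P_i\Theta_i$ has multiplier $v_\vartheta^k\chi\ne v_\vartheta^k$, so $\sum_i P_i\Theta_i=0$ and hence $f=P_0\in\CC[\theta_1,\dots,\theta_{10}]$. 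Together with the obvious reverse inclusion this gives $M_*(\Orth_0,v_\vartheta)=\CC[\theta_1,\dots,\theta_{10}]$, and in passing shows that $\chi$ is the unique nontrivial character of $\Orth_0/\Orth_1'$.

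The step I expect to be the main obstacle is the eigenspace identity used above: one must be sure that a form in the $\chi_S$-eigenspace vanishes on the \emph{whole} divisor of $\theta_i$ for each $i\in S$, and that the quotient by the full product $\prod_{i\in S}\theta_i$ remains holomorphic. Both points follow once one knows that the ten divisors of $\theta_1,\dots,\theta_{10}$ are pairwise disjoint and reduced, which I would read off from Lemma~\ref{lem:2U4A1products}: the $\theta_i$ are distinct singular-weight Borcherds products, hence individually have reduced divisor, and their product $\Phi_{5,A_1}$ has reduced divisor, so no two of them can share an irreducible component; a reflection with mirror a component of $\div\theta_i$ then negates $\theta_i$ and fixes the other nine, which is exactly what the induction needs. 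Everything else amounts to bookkeeping with the finite abelian quotients and with which monomials in the $\theta_i$ and $\Theta_j$ carry which multiplier system — material already assembled in the discussion preceding the statement.
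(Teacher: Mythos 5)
Your proposal is correct and follows essentially the same route as the paper: the character/eigenspace decomposition of $M_*(\Orth_1'(2U(4)\oplus A_1),v_\Theta)$ over the abelian quotients, the identity $M_{\frac{k}{2}}(\Orth_1,v_\Theta^k\chi_i)=\theta_i\,M_{\frac{k-1}{2}}(\Orth_1,v_\Theta^{k-1})$ combined with Runge's theorem, the reduction of $\Theta_i\Theta_j$ to polynomials in the $\theta_k^2$, and the final multiplier-system comparison $v_\vartheta^k$ versus $v_\vartheta^k\chi$ on $\Orth_0$. The only difference is that you spell out the induction on $|S|$ for products of basic characters and the disjointness of the divisors of the $\theta_i$, which the paper leaves implicit.
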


\begin{remark}\label{rem:decomposotionO2}
The ten theta constants $\theta_i$ are not all modular on $\Orth_2(2U(4)\oplus A_1)$. In fact, $\Phi_{5,A_1}$ factors as the product of five forms of weights $1/2$, $1/2$, $1$, $1$ and $2$ on $\Orth_2(2U(4)\oplus A_1)$, which are products of some theta constants. 
\end{remark}

We can now construct some new free algebras of Siegel modular forms associated to $*$-sets. Let $\Theta_1,...,\Theta_4$ be a $*$-set and let $\Orth_2(2U(4) \oplus A_1)$ be the group generated by its reflections and by $\Orth_1(2U(4) \oplus A_1)$.
\begin{theorem}\label{th:O2multiplier}
For $1\leq i \leq 4$ let $v_i$ be the multiplier system of $\Theta_i$ on $\Orth_2(2U(4)\oplus A_1)$.  
Then
$$
M_*(\Orth_2(2U(4)\oplus A_1),v_i)=\CC[\Theta_i, \Theta_j^2, 1\leq j\leq 4, j\neq i], \quad 1\leq i \leq 4.
$$

\end{theorem}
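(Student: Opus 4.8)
\emph{Proof idea.} The plan is to apply Theorem~\ref{th:criterion} to the group $\Gamma=\Orth_2(2U(4)\oplus A_1)$, the multiplier system $v=v_i$, and the $n+1=4$ forms $\Theta_i$ together with $\Theta_j^2$ for $j\neq i$; by the symmetry of the statement under permuting the four products we may assume $i=1$. The forms $\Theta_2^2,\Theta_3^2,\Theta_4^2$ lie in $M_1(\Orth_2(2U(4)\oplus A_1))$ with trivial character — this is the defining property of a $*$-set, already used in the proof of Theorem~\ref{th:Perna} — and hence lie in $M_*(\Orth_2(2U(4)\oplus A_1),v_1)$; and $\Theta_1\in M_{1/2}(\Orth_2(2U(4)\oplus A_1),v_1)$ by the definition of $v_1$. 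So all four forms belong to $M_*(\Orth_2(2U(4)\oplus A_1),v_1)$, and $v_1^2=1$ on $\Orth_2(2U(4)\oplus A_1)$ because $\Theta_1^2$ has trivial character.

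Next I would compute the Jacobian. Since $\Theta_1$ has weight $1/2$ and each $\Theta_j^2$ has weight $1$, writing $\partial_{z_k}(\Theta_j^2)=2\Theta_j\,\partial_{z_k}\Theta_j$ together with $\Theta_j^2=2\Theta_j\cdot\tfrac12\Theta_j$ in the top row lets one pull the factor $2\Theta_j$ out of the $j$-th column for $j=2,3,4$, and what remains is exactly the Jacobian $J(\Theta_1,\Theta_2,\Theta_3,\Theta_4)$. Thus
$$
J(\Theta_1,\Theta_2^2,\Theta_3^2,\Theta_4^2)=8\,\Theta_2\Theta_3\Theta_4\cdot J(\Theta_1,\Theta_2,\Theta_3,\Theta_4).
$$
By Lemma~\ref{lem:2U4A1jacobian}, which applies to any $*$-set (cf.\ the proof of Theorem~\ref{th:Perna}), one has $J(\Theta_1,\Theta_2,\Theta_3,\Theta_4)=c\,\Phi_{5,A_1}$ for some $c\in\CC^*$, so
$$
J(\Theta_1,\Theta_2^2,\Theta_3^2,\Theta_4^2)=8c\,\Theta_2\Theta_3\Theta_4\,\Phi_{5,A_1}\not\equiv 0 .
$$

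It then remains to match $\div J(\Theta_1,\Theta_2^2,\Theta_3^2,\Theta_4^2)$ with the mirrors of the reflections $\sigma_r\in\Orth_2(2U(4)\oplus A_1)$ satisfying $v_1(\sigma_r)=1$, each with multiplicity one. As in the proof of Theorem~\ref{th:Perna}, the reflecting mirrors of $\Orth_2(2U(4)\oplus A_1)$ are precisely the pairwise distinct, reduced components of $\div(\Phi_{5,A_1}\Theta_1\Theta_2\Theta_3\Theta_4)$. I would then evaluate $v_1$ on these: on $\Orth_1(2U(4)\oplus A_1)$ it agrees with $v_\Theta$, which is $1$ on all reflections by Lemma~\ref{lem:2U4A1products2}(3), so the components of $\div\Phi_{5,A_1}$ carry $v_1=1$; for a reflection whose mirror lies in $\div\Theta_j$ with $j\neq1$, the $*$-set condition makes the input of $\Theta_1$ invariant under it and $\Theta_1$ non-vanishing on its mirror, and since $v_1^2=1$ this forces $v_1=1$; and for a reflection whose mirror lies in $\div\Theta_1$, the form $\Theta_1$ vanishes to order exactly one along that pointwise-fixed mirror, which forces $v_1=-1$. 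Hence the mirrors of reflections in $\Orth_2(2U(4)\oplus A_1)$ with $v_1=1$ are exactly the components of $\div(\Phi_{5,A_1}\Theta_2\Theta_3\Theta_4)$, each with multiplicity one, i.e.\ exactly $\div J(\Theta_1,\Theta_2^2,\Theta_3^2,\Theta_4^2)$. Theorem~\ref{th:criterion} now gives $M_*(\Orth_2(2U(4)\oplus A_1),v_1)=\CC[\Theta_1,\Theta_2^2,\Theta_3^2,\Theta_4^2]$, and the general case follows by relabeling.

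The main obstacle is this last divisor–reflection bookkeeping: one must use the description of the reflecting hyperplanes of $\Orth_2(2U(4)\oplus A_1)$ from the proof of Theorem~\ref{th:Perna}, the fact that $\div\Phi_{5,A_1}$ and the four $\div\Theta_j$ are individually reduced with no common components (they involve vectors of distinct orders $2$ and $4$, and two distinct Type II products cannot share a mirror), and the fact that $v_1$ separates $\div\Theta_1$ from the rest, so that deleting the single factor $\Theta_1$ from the Jacobian corresponds precisely to discarding the reflections with $v_1=-1$. A purely algebraic alternative would start from Theorem~\ref{th:Perna}: since $v_1^2=1$, the algebra splits as $M_*(\Orth_2(2U(4)\oplus A_1))\oplus\bigoplus_m M_{m+1/2}(\Orth_2(2U(4)\oplus A_1),v_1)$, and one shows that every $F\in M_{m+1/2}(\Orth_2(2U(4)\oplus A_1),v_1)$ vanishes along $\div\Theta_1$, so that $F/\Theta_1\in M_m(\Orth_2(2U(4)\oplus A_1))$; combined with Theorem~\ref{th:Perna} this again yields $M_*(\Orth_2(2U(4)\oplus A_1),v_1)=\CC[\Theta_1^2,\Theta_2^2,\Theta_3^2,\Theta_4^2][\Theta_1]=\CC[\Theta_1,\Theta_2^2,\Theta_3^2,\Theta_4^2]$, but it runs into the same bookkeeping when justifying the vanishing of $F$.
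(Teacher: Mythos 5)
Your proposal is correct and follows essentially the same route as the paper: reduce to $i=1$, observe that $v_1=1$ on reflections attached to $\div(\Phi_{5,A_1}\Theta_2\Theta_3\Theta_4)$ and $v_1=-1$ on those attached to $\div\Theta_1$, identify $J(\Theta_1,\Theta_2^2,\Theta_3^2,\Theta_4^2)$ with $\Phi_{5,A_1}\Theta_2\Theta_3\Theta_4$ up to a nonzero constant via Lemma~\ref{lem:2U4A1jacobian}, and invoke Theorem~\ref{th:criterion}. The paper's proof is just a terser version of the same argument, and your extra bookkeeping (the factorization of the Jacobian and the evaluation of $v_1$ on the relevant reflections) fills in exactly the steps the paper leaves implicit.
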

This includes Theorem \ref{th:Perna} which describes the subring of integer-weight forms.
\begin{proof}
Without loss of generality we take $i=1$. It is clear that $v_1=1$ for all reflections associated to the divisor of $\prod_{j=2}^4\Theta_j$ and  $v_1=-1$ for reflections associated to the divisor of $\Theta_1$. By Lemma \ref{lem:2U4A1jacobian} the Jacobian of $\Theta_1$, $\Theta_j^2$ for $2\leq j\leq 4$ equals $\Phi_{5,A_1}\prod_{j=2}^4\Theta_j$ up to a non-zero multiple. The claim follows from Theorem \ref{th:criterion}. 
\end{proof}

Let $\Orth_{1,1}(2U(4)\oplus A_1)$, $\Orth_{1, 12}(2U(4) \oplus A_1)$ and $\Orth_{1, 123}(2U(4) \oplus A_1)$ denote the subgroups generated by reflections associated to the divisor of $\Phi_{5,A_1}\Theta_1$, $\Phi_{5, A_1} \Theta_1 \Theta_2$ and $\Phi_{5, A_1} \Theta_1 \Theta_2 \Theta_3$, respectively. By a similar argument, we can prove the following results using Theorem \ref{th:criterion}:
\begin{align*}
M_*(\Orth_{1,1}(2U(4)\oplus A_1),v_4)&=\CC[\Theta_1^2,\Theta_2,\Theta_3,\Theta_4],\\
M_*(\Orth_{1,12}(2U(4)\oplus A_1),v_4)&=\CC[\Theta_1^2,\Theta_2^2,\Theta_3,\Theta_4],\\
M_*(\Orth_{1,123}(2U(4)\oplus A_1),v_4)&=\CC[\Theta_1^2,\Theta_2^2,\Theta_3^2,\Theta_4].
\end{align*}

We remark that $\Orth_{1,1}(2U(4)\oplus A_1)$ does not contain any reflections associated to the divisor of $\Theta_2\Theta_3\Theta_4$. Indeed, the multiplier systems $v_2$, $v_3$ and $v_4$ coincide on $\Orth_{1,1}(2U(4)\oplus A_1)$ because the quotient of any two of them defines a character whose values on generators are always 1. If $\Orth_{1,1}(2U(4)\oplus A_1)$ contained a reflection $\sigma$ associated to the divisor of $\Theta_2$ then we would find $v_2(\sigma)=-1$ but $v_3(\sigma)=v_4(\sigma)=1$, a contradiction. Similarly, $\Orth_{1,12}(2U(4)\oplus A_1)$ does not contain any reflections associated to the divisor of $\Theta_3\Theta_4$.

On the other hand, $\Orth_{1,123}(2U(4)\oplus A_1)$ does contain reflections associated to the divisor of $\Theta_4$. In fact,
\begin{equation}\label{eq:O2}
\Orth_{1,123}(2U(4)\oplus A_1)=\Orth_{2}(2U(4)\oplus A_1).
\end{equation}
(Of course, this agrees with Theorem \ref{th:O2multiplier}.) If this was not true, then the only alternative would be $M_*(\Orth_{1,123}(2U(4)\oplus A_1),v_4^2)=\CC[\Theta_i^2, i=1,2,3,4]$, contradicting the decomposition of the Jacobian in Theorem \ref{th:freeJacobian}.

Finally we obtain some larger groups generated by reflections and free algebras of modular forms associated to triples of $*$-sets. Suppose $\Theta_1, \Theta_2$ is one of the 90 pairs of type II products that can be extended to three distinct $*$-sets, denoted $$\{\Theta_1, \Theta_2, \Theta_3, \Theta_4\}, \quad \{\Theta_1, \Theta_2, \Theta_5, \Theta_6\}, \quad \{\Theta_1, \Theta_2, \Theta_7, \Theta_8\}.$$ The principal part into the input function to $\Theta_i$ will be denoted $q^{-1/4} (e_{u_i} + e_{-u_i})$.

In this situation the reflection $\sigma_{u_5}$ fixes the set $\{\pm u_1, \pm u_2, \pm u_3, \pm u_4\}$ and has $u_1$ and $u_2$ as eigenvectors. It cannot have $u_3$ and $u_4$ as eigenvectors as otherwise the proof of Lemma \ref{lem:*-sets} yields five linearly independent type II products. Therefore it maps $u_3$ to $\pm u_4$ and $u_4$ to $\pm u_3$. Taking Borcherds lifts and using the fact that all type II products have Fourier coefficients in $\mathbb{Z}[i]$, this implies $\sigma_{u_5}(\Theta_3) = i^k \Theta_4$ for some $k \in \{0, 1, 2, 3\}$. The same statements hold for the reflections $\sigma_{u_i}$, $i = 6,7,8.$

Let $\Orth_{2,56}(2U(4)\oplus A_1)$ be the group generated by $\Orth_{2}(2U(4)\oplus A_1)$ and the reflections associated to the divisor of $\Theta_5$. Since $\sigma_{u_3}$ swaps $u_5$ and $u_6$ (up to multiples), $\Orth_{2,56}(2U(4) \oplus A_1)$ also contains the reflections associated to the divisor of $\Theta_6$. It is clear that $\Theta_3^2\Theta_4^2\in M_2(\Orth_{2,56}(2U(4)\oplus A_1))$. From the previous paragraph it follows that $\sigma_{u_5}(\Theta_3^2)= c \Theta_4^2$ where $c \in \{\pm 1\}$. Then $\sigma_{u_5}(\Theta_4^2)= c\Theta_3^2$ and therefore $\Theta_3^2 + c\Theta_4^2\in M_1(\Orth_{2,56}(2U(4)\oplus A_1))$.

On the other hand, $\Theta_5\Theta_6\in M_1(\Orth_{2}(2U(4)\oplus A_1))$ is a $\CC$-linear combination of $\Theta_i^2$ for $i=1,2,3,4$. By considering the  action of $\sigma_{u_5}$ on these functions, we find that $\Theta_5\Theta_6$ is equal to $\Theta_3^2 - c\Theta_4^2$ up to a non-zero constant multiple. Using Lemma \ref{lem:2U4A1jacobian}, we see that the Jacobian of $\Theta_1^2$, $\Theta_2^2$, $\Theta_3^2 + c\Theta_4^2$ and $\Theta_3^2\Theta_4^2$ is equal to $\Phi_{5,A_1}\prod_{i=1}^6\Theta_i$ up to a non-zero multiple. We have therefore proved the following:
$$ 
M_*(\Orth_{2,56}(2U(4)\oplus A_1))=\CC[\Theta_1^2,\Theta_2^2,\Theta_3^2 + c\Theta_4^2, \Theta_3^2\Theta_4^2].
$$

Similarly, since $\Theta_7\Theta_8\in M_1(\Orth_{2,56}(2U(4)\oplus A_1))$, it is equal to $\Theta_3^2+c\Theta_4^2$ up to a non-zero constant by considering the action of $\sigma_{u_7}$. Letting $\Orth_{2,78}(2U(4)\oplus A_1)$ be the group generated by $\Orth_{2}(2U(4) \oplus A_1)$ and the reflections associated to the divisor of $\Theta_7$, we obtain by the same argument
$$
M_*(\Orth_{2,78}(2U(4)\oplus A_1)) = \CC[\Theta_1^2, \Theta_2^2, \Theta_3^2 - c\Theta_4^2, \Theta_3^2 \Theta_4^2].
$$

Finally, let $\Orth_{2,5678}(2U(4)\oplus A_1)$ be the group generated by $\Orth_{2}(2U(4)\oplus A_1)$ and reflections associated to the divisor of $\Theta_5\Theta_7$. Then this group also contains the reflections associated to the divisor of $\Theta_6\Theta_8$. Using Lemma \ref{lem:2U4A1jacobian} we derive that the Jacobian of $\Theta_1^2$, $\Theta_2^2$, $\Theta_3^2\Theta_4^2$, $\Theta_5^2\Theta_6^2$ equals $\Phi_{5,A_1}\prod_{i=1}^8\Theta_i$ up to a non-zero constant multiple, and have proved the following:
$$ 
M_*(\Orth_{2,5678}(2U(4)\oplus A_1))=\CC[\Theta_1^2,\Theta_2^2, \Theta_3^2\Theta_4^2, \Theta_5^2\Theta_6^2].
$$

Altogether we have computed the algebras of modular forms for eight subgroups of $\Orth^+(2U\oplus A_1)$:
\begin{align*}
&\Orth_{1}'(2U(4)\oplus A_1) \subsetneq \Orth_{0}(2U(4)\oplus A_1) \subsetneq \Orth_{1}(2U(4)\oplus A_1) \subsetneq \Orth_{1,1}(2U(4)\oplus A_1) \subsetneq \\ 
&\subsetneq \Orth_{1,12}(2U(4)\oplus A_1) \subsetneq \Orth_{2}(2U(4)\oplus A_1) \subsetneq \Orth_{2,56}(2U(4)\oplus A_1) \subsetneq \Orth_{2,5678}(2U(4)\oplus A_1).
\end{align*}

\begin{remark}
There is no multiplier system $v$ of $\Orth_2(U(4)\oplus A_1)$ which equals 1 on all reflections. If such a $v$ existed, then $v_4/v$ would define a character of $\Orth_2(U(4)\oplus A_1)$ which equals 1 on all reflections associated to the divisor of $\Theta_1\Theta_2\Theta_3$ but is $-1$ on all reflections associated to the divisor of $\Theta_4$, which contradicts \eqref{eq:O2}. 
\end{remark}

\begin{remark}
The Baily-Borel compactification of the modular variety $\cD_{3}/ \Orth_1(2U(4)\oplus A_1)$ is a projective space, but the algebra $M_*(\Orth_1(2U(4)\oplus A_1))$ of modular forms of integral weight is not free. It is well-known that if $M_*(\Gamma)$ is free then $(\cD / \Gamma)^*$ is a weighted projective space (see \cite{BB66}), but the above example shows that the converse does not hold. Eberhard Freitag suggested to the authors that the following statement may hold. 
\begin{conjecture}
If the modular variety $(\cD / \Gamma)^*$ is a weighted projective space, then there exists a weight $k_0$ and a multiplier system $v_0$ of weight $k_0$ such that the graded algebra 
$$
M_*(\Gamma, (k_0,v_0)):=\bigoplus_{k\in \NN} M_{kk_0}(\Gamma, v_0^k)
$$
is freely generated.
\end{conjecture}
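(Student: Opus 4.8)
A natural line of attack is to reduce the statement to the identification of the homogeneous coordinate ring of $X := (\cD/\Gamma)^*$. By \cite{BB66} and the Koecher principle, $M_*(\Gamma)$ is the section ring $\bigoplus_{k\geq 0} H^0(X,\mathcal{O}_X(k\lambda))$ of a Weil divisor class $\lambda$ on the normal projective variety $X$, with $X = \Proj M_*(\Gamma)$ and $\lambda \in \mathrm{Cl}(X)$. Assume $X \cong \PP(a_0,\dots,a_n)$ and pass to a well-formed model, so that $\mathrm{Cl}(X) = \ZZ\cdot H$ is generated by the ample class $H = \mathcal{O}_X(1)$. Since $\lambda$ is ample (a reflexive power of it gives the Baily--Borel embedding), $\lambda = cH$ for a unique integer $c \geq 1$; hence $M_k(\Gamma) = H^0(X,\mathcal{O}_X(ck))$ for all $k$, so $M_*(\Gamma)$ is the $c$-th Veronese subalgebra of $S := \bigoplus_{k\geq 0}H^0(X,\mathcal{O}_X(k))$, which by the standard description of weighted projective space is the polynomial ring $\CC[x_0,\dots,x_n]$ with $\deg x_i = a_i$.

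The plan is then to realize all of $S$, and not merely its $c$-th Veronese, as a twisted algebra of modular forms, with $k_0 := 1/c$. Since $\cD$ is a contractible Stein domain, every holomorphic line bundle on $\cD$ is trivial; the same holds on the preimage of the smooth locus of $X$, and the corresponding automorphy factor extends over the codimension-$\geq 2$ complement. Thus $\mathcal{O}_X(H)$ pulls back to the trivial bundle on $\cD$ with the $\Gamma$-action given by a holomorphic automorphy factor $j_H \colon \Gamma\times\cD\to\CC^*$ whose $c$-th power equals the standard weight-one automorphy factor up to a coboundary, which we absorb. Fixing a holomorphic branch of the $c$-th root of the weight-one factor on the simply connected $\cD$, the ratio of $j_H$ to this branch is a $\mu_c$-valued function $v_0$ on $\Gamma$, that is, a multiplier system of weight $1/c$ (not necessarily multiplicative, but requiring no central extension of $\Gamma$, since $cH = \lambda$ already holds in $\mathrm{Cl}(X)$). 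The holomorphic functions on $\cD$ transforming with weight $k/c$ and multiplier $v_0^k$ are then exactly the $\Gamma$-equivariant sections of the bundle with automorphy factor $j_H^k$, which represents the Weil class $kH$; extending over the orbifold locus (via reflexive hulls) and over the Baily--Borel boundary should give $M_{k/c}(\Gamma,v_0^k) = H^0(X,\mathcal{O}_X(k)) = S_k$, whence $M_*(\Gamma,(k_0,v_0)) = \bigoplus_{k\geq 0}S_k = \CC[x_0,\dots,x_n]$ is freely generated.

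The main obstacle is this last identification. One must check that the modular forms of weight $k/c$ with multiplier $v_0^k$, in the precise sense used in the conjecture, coincide degree-by-degree with $H^0(X,\mathcal{O}_X(k))$: this requires pinning down the representative $j_H$ so that one lands on $H$ itself and not on $H$ twisted by a torsion orbifold bundle, and a Koecher-type extension statement for the fractional automorphy factors $j_H^k$ across the reflection mirrors and the cusps. It is also worth noting that the conjecture in full generality forces arbitrary fractional weights $k_0 = 1/c$, whereas in all sixteen cases in this paper $c\in\{1,2\}$ and only integral or half-integral weights occur, so the construction should be tested first for $c\leq 2$. As an independent check, one could instead try to exhibit the generators $x_i$ directly as modular forms of weight $a_i/c$ and invoke Theorem \ref{th:criterion}, which reduces to showing that their Jacobian vanishes to order exactly one along the mirrors of the reflections $\sigma_r\in\Gamma$ with $v_0(\sigma_r)=1$ — a condition governed by a weight count involving the ramification of $\cD\to X$, but seemingly less transparent than the section-ring argument above.
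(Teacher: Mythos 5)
The statement you are proving is stated in the paper as an open conjecture (attributed to Eberhard Freitag); the paper offers no proof of it, so there is nothing to compare your argument against except the evidence of the sixteen examples. Your proposal is a reasonable strategy sketch, but as written it is not a proof, and the gaps are exactly at the points you yourself flag as obstacles. The most serious one is the existence of the multiplier system $v_0$. A multiplier system must satisfy the cocycle condition, and the object you construct --- the ratio of a representative automorphy factor $j_H$ to a chosen branch of the $c$-th root of the weight-one factor --- is a priori only a $\mu_c$-valued function on $\Gamma$; the obstruction to correcting it to a genuine homomorphism-like cocycle lives in $H^2(\Gamma,\mu_c)$, and the relation $cH=\lambda$ in $\mathrm{Cl}(X)$ does not kill that obstruction, because the map from automorphy factors (equivalently, $\Gamma$-linearized line bundles on $\cD$) to $\mathrm{Cl}(X)$ is neither injective nor surjective: its kernel contains all characters of $\Gamma$, and its image misses classes supported on the branch divisor of $\cD\to X$. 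If $v_0$ is not an actual multiplier system, the spaces $M_{kk_0}(\Gamma,v_0^k)$ in the conjecture are not even defined.

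The second gap is the degree-by-degree identification $M_{k/c}(\Gamma,v_0^k)=H^0(X,\mathcal{O}_X(k))$. On a well-formed weighted projective space $\mathcal{O}_X(1)$ is only reflexive, and the quotient map $\cD\to X$ is ramified to order $2$ along the reflection mirrors (whose images typically lie in the smooth locus of $X$). Sections of $\mathcal{O}_X(kH)$ therefore pull back to functions on $\cD$ satisfying \emph{divisorial} conditions along the mirrors --- prescribed fractional vanishing orders that become integral only after accounting for the ramification --- and these conditions are not encoded by a multiplier system alone. This is not a technicality: the paper's own example $\Orth_1(2U(4)\oplus A_1)$ has $(\cD_3/\Gamma)^*\cong\PP^3(\CC)$ while $M_*(\Gamma)$ is \emph{not} free, and the weight-$\tfrac12$ generators $\Theta_i$ are detected precisely by their behavior along mirrors, not merely by a character. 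Your closing suggestion --- exhibit candidate generators of weight $a_i/c$ directly and apply Theorem \ref{th:criterion} via the Jacobian --- is in fact the only mechanism the paper has for establishing freeness, and carrying that out in general (for an arbitrary $\Gamma$ with $(\cD/\Gamma)^*$ a weighted projective space) is exactly the open content of the conjecture.
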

\end{remark}

\begin{remark}
There are twenty linearly independent modular forms of weight $1/2$ for the Weil representation attached to $2U(4)\oplus A_1$, and their images under the additive theta lift (\cite{Bor98}, Theorem 14.3) span the 10-dimensional space of modular forms of weight one for $\Orth_1(2U(4)\oplus A_1)$. 
\end{remark}

\section{The \texorpdfstring{$2U(2)\oplus 2A_1$}{} lattice}
In this section we prove Matsumoto's theorem in the context of modular forms on $2U(2)\oplus 2A_1$. We first work out some Borcherds products on $2U(2)\oplus 2A_1$.

\begin{lemma}\label{lem:2A1inputs}
\noindent
\begin{enumerate}
\item There are $10$ holomorphic Borcherds products of singular weight 1. Their inputs have principal parts of the form
$$
q^{-1/4} (e_{v_1} + e_{v_2}) + 2e_0, \quad (v_1,v_1)=(v_2,v_2)=1/2, \quad \ord(v_1)=\ord(v_2)=2,
$$
where $v_1$ and $v_2$ are images of each other under swapping the two $A_1$ components. In particular they are 2-reflective modular forms.  We label the ten forms $F_i$, $1\leq i \leq 10$ such that with respect to the Gram matrix
$$
\begin{psmallmatrix}
0 & 0 & 0 & 0 & 0 & 2 \\ 
0 & 0 & 0 & 0 & 2 & 0 \\ 
0 & 0 & 2 & 0 & 0 & 0 \\ 
0 & 0 & 0 & 2 & 0 & 0 \\ 
0 & 2 & 0 & 0 & 0 & 0 \\ 
2 & 0 & 0 & 0 & 0 & 0
\end{psmallmatrix}
$$ their principal parts are as follows:

\begin{small}
\begin{align*}
F_1: \; & q^{-1/4}(e_{(0, 1/2, 1/2, 0, 0, 0)} + e_{(0, 1/2, 0, 1/2, 0, 0)}); \; &F_2: \;  &q^{-1/4}(e_{(1/2, 0, 1/2, 0, 0, 0)} + e_{(1/2, 0, 0, 1/2, 0, 0)});\\
F_3: \; & q^{-1/4}(e_{(1/2, 1/2, 1/2, 0, 1/2, 1/2)} + e_{(1/2, 1/2, 0, 1/2, 1/2, 1/2)}); \; &F_4: \;  &q^{-1/4}(e_{(1/2, 0, 1/2, 0, 1/2, 0)} + e_{(1/2, 0, 0, 1/2, 1/2, 0)});\\
F_5: \; & q^{-1/4}(e_{(0, 1/2, 1/2, 0, 0, 1/2)} + e_{(0, 1/2, 0, 1/2, 0, 1/2)}); \; &F_6: \;  &q^{-1/4}(e_{(1/2, 1/2, 1/2, 0, 0, 0)} + e_{(1/2, 1/2, 0, 1/2, 0, 0)});\\
F_7: \; & q^{-1/4}(e_{(0, 0, 1/2, 0, 1/2, 1/2)} + e_{(0, 0, 0, 1/2, 1/2, 1/2)}); \; &F_8: \;  &q^{-1/4}(e_{(0, 0, 1/2, 0, 1/2, 0)} + e_{(0, 0, 0, 1/2, 1/2, 0)});\\
F_9: \; & q^{-1/4}(e_{(0, 0, 1/2, 0, 0, 1/2)} + e_{(0, 0, 0, 1/2, 0, 1/2)}); \; &F_{10}:\;  &q^{-1/4}(e_{(0, 0, 1/2, 0, 0, 0)} + e_{(0, 0, 0, 1/2, 0, 0)}).
\end{align*}
\end{small}
The ten $F_i$ are linearly independent over $\CC$. We define $\Phi_{10,2A_1}:=\prod_{j=1}^{10}F_j$.

\item There are $15$ reflective Borcherds products of weight $2$ with principal parts of the form $q^{-1/2}e_u$, where $(u,u)=1$, $\ord(u)=2$ and $u$ is invariant under the swapping of two $A_1$ components. 

\item There is a holomorphic Borcherds product of weight 4, which we denote $\Phi_{4, 2A_1}$, whose input has the principal part $q^{-1/2} e_{(0, 0, 1/2, 1/2, 0, 0)}$. The reflection $\sigma_1$ associated to the vector $(0,0,1/2,1/2,0,0)$ swaps the two $A_1$ components.
\end{enumerate}
\end{lemma}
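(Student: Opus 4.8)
The plan is to proceed exactly as in the proof of Lemma~\ref{lem:2U4A1products}: all three assertions should be read off from the Fourier coefficients of a single vector-valued Eisenstein series, supplemented by explicit evaluation of the relevant Borcherds product expansions. Since $2U(2)$ has signature $(2,2)$ and $2A_1$ has signature $(2,0)$, the lattice $M=2U(2)\oplus 2A_1$ has signature $(4,2)$, so $n=4$ and the singular weight for $\Orth^+(M)$ equals $n/2-1=1$. By \cite{Bor98} a holomorphic Borcherds product on $M$ is the lift of a weakly holomorphic vector-valued modular form $f=\sum_{\gamma,m}c(\gamma,m)q^{m}e_{\gamma}$ of weight $1-n/2=-1$ for the Weil representation of $M$, with $c(\gamma,m)\in\ZZ_{\geq 0}$ for $m<0$ and $c(0,0)\in 2\ZZ_{>0}$; the lift has weight $c(0,0)/2$, and its divisor is a non-negative combination of the rational quadratic divisors $\cD_r$ associated to the principal part. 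Since there are no nonzero holomorphic vector-valued modular forms of weight $-1$, $f$ is uniquely determined by its principal part $\{c(\gamma,m)\}_{m<0}$, and (as $n\geq 3$) Koecher's principle makes the lift automatically a holomorphic modular form once its divisor is effective.

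The first step is to compute the space of holomorphic vector-valued modular forms of dual weight $1+n/2=3$ for the dual Weil representation of $M$: a basis of the cusp space and the Eisenstein series $E_3$, together with enough of its Fourier coefficients. The discriminant form of $M$ is $(\ZZ/2\ZZ)^6$, so this is a computation with a rank-$64$ representation, which I would carry out in SAGE using the algorithm of \cite{Wi18}. By Borcherds' obstruction principle, a formal principal part $\{c(\gamma,m)\}_{m<0}$ is realized by a weakly holomorphic form of weight $-1$ precisely when it pairs to zero with every cusp form of weight $3$; and when it is, the value $c(0,0)$ --- hence the weight of the resulting product --- is recovered by pairing against $E_3$.

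With this in hand one runs through the finitely many candidate principal parts with $c(0,0)\in\{2,4,8\}$. For (1) these are $q^{-1/4}(e_{v_1}+e_{v_2})$, where $v_1,v_2$ are anisotropic order-$2$ classes of norm $1/2$ interchanged by the involution of the discriminant form induced by swapping the two $A_1$ summands; as in Lemma~\ref{lem:2U4A1products}, a single term $q^{-1/4}e_{v}$ lifts only to a form with non-integral coefficients, so symmetrization over such a swap-pair is forced. For (2) the candidates are $q^{-1/2}e_u$ with $u$ an order-$2$ class of norm $1$ fixed by that involution, and for (3) the single candidate $q^{-1/2}e_{u_0}$ with $u_0=(0,0,1/2,1/2,0,0)$. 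Applying the previous step, the cusp-form pairing selects the principal parts that lift, the $E_3$-pairing returns $c(0,0)$ and hence the weight --- $1$, $2$, and $4$ in the three cases --- and effectivity of the divisor gives holomorphy. This produces exactly the $10$ products of weight $1$ (being of singular weight, these are all holomorphic Borcherds products of singular weight), the $15$ reflective products of weight $2$, and the single weight-$4$ product; in each case the divisor is the one (resp. two) rational quadratic divisors $\cD_r$ with $(r,r)=2/d$ and $\ord(r)\in\{d,d/2\}$, $d=4$ in (1) and $d=2$ in (2), (3), hence reflective --- and after rescaling the $A_1$ summands as in \eqref{eq:group} so that the $v_i$ have norm $2$, the $F_i$ are $2$-reflective. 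The claim about $\sigma_1$ in (3) is the direct computation $\sigma_{u_0}(e_3)=-e_4$, $\sigma_{u_0}(e_4)=-e_3$ on the two $A_1$ generators $e_3,e_4$. Linear independence of the $F_i$ is then checked by computing a truncation of each Fourier expansion from its input form (via \cite{Wi18} and \cite{Bor98}) and exhibiting Fourier coefficients on which the $10\times N$ coefficient matrix has rank $10$; finally one sets $\Phi_{10,2A_1}:=\prod_{j=1}^{10}F_j$ and lets $\Phi_{4,2A_1}$ denote the weight-$4$ product of (3).

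The main obstacle is the first step: obtaining the space of weight-$3$ forms for the rank-$64$ Weil representation accurately --- in particular pinning down the cusp forms and producing enough Eisenstein coefficients --- since once this is available, the enumeration and the linear-independence check reduce to finite linear algebra over the $64$-element discriminant group together with a single rank computation, all handled by computer.
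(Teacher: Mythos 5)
Your proposal is correct and follows essentially the same route the paper takes: the lemma is asserted on the basis of exactly this kind of computation (obstruction pairing against weight-$3$ cusp forms for the rank-$64$ Weil representation, weights read off from the Eisenstein series, holomorphy from effectivity, linear independence from truncated Fourier expansions), in parallel with the proof of Lemma \ref{lem:2U4A1products} via \cite{OS19} and the algorithm of \cite{Wi18}. The only cosmetic remark is that $2$-reflectivity is seen most directly by noting that $2v_i\in M$ has norm $2$ and $\cD_{2v_i}=\cD_{v_i}$, rather than by rescaling the $A_1$ summands.
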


Let $\Orth_1(2U(2)\oplus 2A_1)$ be the subgroup of $\Orth^+(2U(2)\oplus 2A_1)$ generated by all reflections associated to the divisors of $\Phi_{10,2A_1}$ and $\Phi_{4,2A_1}$. The inputs of all $F_i$ and of $\Phi_{4, 2A_1}$ are invariant under the reflection $\sigma_1$, so $F_i$ and $\Phi_{4, 2A_1}$ are modular forms for $\Orth_1(2U(2)\oplus 2A_1)$. Each $F_i$ has a quadratic character on $\Orth_1(2U(2) \oplus 2A_1)$ so their squares $F_i^2$ have trivial character. 

\begin{lemma}\label{lem:2A1}
The Jacobian of $F_i^2$, $1\leq i \leq 5$ equals $\Phi_{10,2A_1}\Phi_{4,2A_1}$ up to a non-zero constant multiple. In particular, the forms $F_i$ for $1\leq i \leq 5$ are algebraically independent over $\CC$.
\end{lemma}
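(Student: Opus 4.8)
The plan is to mimic the proof of Lemma~\ref{lem:2U4A1jacobian}. Write $M = 2U(2)\oplus 2A_1$, of signature $(4,2)$ (so $n=4$), and set $J := J(F_1^2,\dots,F_5^2)$. As noted just before the statement, each $F_i^2$ is a modular form of weight $2$ with trivial character on $\Orth_1(M) := \Orth_1(2U(2)\oplus 2A_1)$; hence by Theorem~\ref{th:Jacobian}(1) the Jacobian $J$ is a cusp form of weight $n + 5\cdot 2 = 14$ for $\Orth_1(M)$ with the determinant character. This matches the weight $10 + 4 = 14$ of $\Phi_{10,2A_1}\Phi_{4,2A_1}$, so it suffices to show that $J/(\Phi_{10,2A_1}\Phi_{4,2A_1})$ is a nonzero constant.

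The first step is to check that $J\not\equiv 0$. I would compute enough Fourier coefficients of the five Borcherds products $F_1,\dots,F_5$ --- equivalently, of their weakly holomorphic vector-valued input forms, using the SAGE implementation as in Lemma~\ref{lem:2U4A1jacobian} --- and verify that the $5\times 5$ Jacobian determinant with respect to coordinates on the tube domain is not the zero power series. Granting this, Theorem~\ref{th:Jacobian}(2) gives that $F_1^2,\dots,F_5^2$ are algebraically independent over $\CC$; since each $F_i^2$ lies in $\CC[F_1,\dots,F_5]$, the field $\CC(F_1,\dots,F_5)$ has transcendence degree $\geq 5$ over $\CC$, hence exactly $5$, so $F_1,\dots,F_5$ are themselves algebraically independent, which is the last assertion of the lemma.

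The second step is a divisor comparison. By definition, $\Orth_1(M)$ is generated by the reflections associated to the components of $\div(\Phi_{10,2A_1})=\sum_{j=1}^{10}\div(F_j)$ together with the reflection $\sigma_1$ whose mirror is $\div(\Phi_{4,2A_1})$. Since the $F_i^2$ have trivial character on $\Orth_1(M)$, Theorem~\ref{th:Jacobian}(3) shows that $J$ vanishes along $\cD_r(M)$ for every reflection $\sigma_r\in\Orth_1(M)$, and in particular along every component of $\div(\Phi_{10,2A_1}\Phi_{4,2A_1})$. Because each $F_j$ is a holomorphic Borcherds product whose input has principal part $q^{-1/4}(e_{v_1}+e_{v_2})+2e_0$ with $\ord(v_1)=\ord(v_2)=2$, and likewise $\Phi_{4,2A_1}$ has input principal part $q^{-1/2}e_u$ with $\ord(u)=2$, each of these products vanishes to order exactly $1$ along the divisors in question; a short inspection of the list of principal parts in Lemma~\ref{lem:2A1inputs} shows moreover that the $21$ divisors involved (twenty from $\Phi_{10,2A_1}$ and one from $\Phi_{4,2A_1}$) are pairwise distinct. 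Therefore $\div(\Phi_{10,2A_1}\Phi_{4,2A_1})$ is reduced and $\div(J)\geq\div(\Phi_{10,2A_1}\Phi_{4,2A_1})$, so $J/(\Phi_{10,2A_1}\Phi_{4,2A_1})$ is a holomorphic modular form of weight $0$ and hence a constant, which is nonzero because $J\not\equiv 0$.

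The only computationally nontrivial point, and the main obstacle, is the nonvanishing of $J$ in the first step: it requires a genuine Fourier-coefficient computation for the Borcherds products $F_1,\dots,F_5$. Everything else --- the weight bookkeeping, the divisor inequality, and the passage from algebraic independence of the $F_i^2$ to that of the $F_i$ --- is formal.
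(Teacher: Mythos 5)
Your proposal is correct and follows essentially the same route as the paper: verify $J\not\equiv 0$ from Fourier expansions, use Theorem~\ref{th:Jacobian} to see that $J$ has weight $14$ and vanishes on all mirrors of reflections in $\Orth_1(2U(2)\oplus 2A_1)$, and conclude that $J/(\Phi_{10,2A_1}\Phi_{4,2A_1})$ is a holomorphic form of weight $0$, hence a nonzero constant. The extra bookkeeping you supply (the reducedness of $\operatorname{div}(\Phi_{10,2A_1}\Phi_{4,2A_1})$ and the passage from algebraic independence of the $F_i^2$ to that of the $F_i$) is left implicit in the paper but is correct.
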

\begin{proof}
By computing the first few terms of the Fourier expansion we find that $J = J(F_i^2, 1 \le i \le 5)$ is not identically zero. By Theorem \ref{th:Jacobian}, $J$ is a modular form of weight $14$ and vanishes on mirrors of all reflections in $\Orth_1(2U(2)\oplus 2A_1)$. In particular $J/(\Phi_{10,2A_1}\Phi_{4,2A_1})$ is a holomorphic modular form of weight $0$ and therefore constant.
\end{proof}
By Theorem \ref{th:converseJacobian}, we obtain the following result which is equivalent to Matsumoto's theorem.
\begin{theorem}\label{th:Matsumoto}
\begin{align*}
M_*(\Orth_1(2U(2)\oplus 2A_1))&=\CC[F_i^2, 1\leq i \leq 5],\\
(\cD_{4}/ \Orth_1(2U(2)\oplus 2A_1))^* &\cong \PP^4(\CC).
\end{align*}
\end{theorem}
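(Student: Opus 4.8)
The plan is to deduce Theorem~\ref{th:Matsumoto} from the free-algebra criterion, Theorem~\ref{th:converseJacobian}, applied to the group $\Gamma := \Orth_1(2U(2)\oplus 2A_1)$ and the five candidate generators $F_1^2,\dots,F_5^2$ (the lattice $2U(2)\oplus 2A_1$ has signature $(4,2)$, so $n=4$ and exactly five generators are required). These forms have weight $2$ and trivial character on $\Gamma$, since the $F_i$ are modular on $\Gamma$ with a quadratic character as noted after Lemma~\ref{lem:2A1inputs}; hence they are legitimate inputs to the criterion. By Lemma~\ref{lem:2A1} their Jacobian $J = J(F_1^2,\dots,F_5^2)$ is not identically zero and satisfies $J = c\,\Phi_{10,2A_1}\Phi_{4,2A_1}$ for some $c \in \CC^\times$. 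Thus the whole proof reduces to verifying the divisor hypothesis of Theorem~\ref{th:converseJacobian}: that $\mathrm{div}(J)$ is the reduced sum of the mirrors of all reflections in $\Gamma$, each with multiplicity one.

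To compute $\mathrm{div}(J)$ I would use the factorisation $J \sim \bigl(\prod_{j=1}^{10} F_j\bigr)\,\Phi_{4,2A_1}$ together with Lemma~\ref{lem:2A1inputs}. Each $F_j$ is a singular-weight Borcherds product whose input has principal part $q^{-1/4}(e_{v_1^{(j)}}+e_{v_2^{(j)}}) + 2e_0$ with the $v_i^{(j)}$ of norm $1/2$ and order $2$, so the Borcherds product formula gives $\mathrm{div}(F_j) = \cD_{v_1^{(j)}} + \cD_{v_2^{(j)}}$ with each mirror of multiplicity exactly one (only the $d=1$ term contributes, as there is no negative power of $q$ in the $e_0$-component). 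Likewise $\mathrm{div}(\Phi_{4,2A_1}) = \cD_u$ with multiplicity one, where $u=(0,0,1/2,1/2,0,0)$; this mirror arises from a primitive lattice vector of norm $4$, whereas the $\cD_{v_i^{(j)}}$ arise from primitive lattice vectors of norm $2$, so $\cD_u$ is distinct from all of them. A short inspection of the explicit vectors in Lemma~\ref{lem:2A1inputs} confirms that the $\cD_{v_i^{(j)}}$ are pairwise distinct as well, so $J$ vanishes to order exactly one along each component of its divisor.

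Finally I would match these components with the reflection mirrors of $\Gamma$. By definition $\Gamma$ is generated by the reflections associated to the divisors of $\Phi_{10,2A_1}$ and $\Phi_{4,2A_1}$, i.e.\ to the components of $\mathrm{div}(J)$, so every component of $\mathrm{div}(J)$ is the mirror of a reflection in $\Gamma$. Conversely, if $\sigma_r \in \Gamma$ is any reflection, then, since the $F_i^2$ are modular on $\Gamma$, part~(3) of Theorem~\ref{th:Jacobian} forces $J$ to vanish along $\cD_r$, so $\cD_r$ is one of the components just listed. Hence $\mathrm{div}(J)$ is precisely the reduced sum of the mirrors of reflections in $\Gamma$, and Theorem~\ref{th:converseJacobian} yields $M_*(\Gamma) = \CC[F_1^2,\dots,F_5^2]$, a polynomial ring on five generators of equal weight $2$; taking $\Proj$---equivalently, passing to the second Veronese subalgebra so that all generators have weight one---identifies $(\cD_4/\Gamma)^*$ with $\PP^4(\CC)$. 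The one genuinely delicate point is the bookkeeping in the middle paragraph, namely multiplicity one and the absence of extra mirrors, but this is a finite check combined with the automatic vanishing supplied by Theorem~\ref{th:Jacobian}(3); the analytic heart of the argument---the identity $J = c\,\Phi_{10,2A_1}\Phi_{4,2A_1}$ and the non-vanishing of $J$---is already provided by Lemma~\ref{lem:2A1}.
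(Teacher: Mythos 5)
Your proposal is correct and follows essentially the same route as the paper: the paper also deduces Theorem~\ref{th:Matsumoto} directly from Lemma~\ref{lem:2A1} (the identity $J(F_1^2,\dots,F_5^2)=c\,\Phi_{10,2A_1}\Phi_{4,2A_1}$) together with Theorem~\ref{th:converseJacobian}. The only difference is that you spell out the divisor bookkeeping (multiplicity one, distinctness of the mirrors, and the converse inclusion via Theorem~\ref{th:Jacobian}) which the paper leaves implicit in the definition of $\Orth_1(2U(2)\oplus 2A_1)$ and the Borcherds product data of Lemma~\ref{lem:2A1inputs}.
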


\begin{corollary}\label{cor:Matsumoto}
The squares $F_i^2$ span a five-dimensional space over $\CC$ and they satisfy the four-term relations \begin{align*} F_1^2 - F_5^2 + F_6^2 - F_{10}^2 &= 0, \\ F_2^2 - F_5^2 - F_8^2 - F_9^2 &= 0, \\ F_3^2 - F_5^2 + F_6^2 - F_9^2 &= 0, \\ F_4^2 - F_6^2 - F_8^2 + F_{10}^2 &= 0, \\ F_7^2 - F_8^2 - F_9^2 + F_{10}^2 &= 0.\end{align*}
The vector space spanned by the $15$ weight two products in Lemma \ref{lem:2A1inputs} and the ten squares $F_i^2$ has dimension $5$.
\end{corollary}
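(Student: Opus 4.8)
The plan is to reduce all three claims to Theorem~\ref{th:Matsumoto} together with a finite computation of Fourier coefficients of the Borcherds products appearing in Lemma~\ref{lem:2A1inputs}.

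By Theorem~\ref{th:Matsumoto} the algebra $M_*(\Orth_1(2U(2)\oplus 2A_1))=\CC[F_1^2,\dots,F_5^2]$ is freely generated by the five algebraically independent weight-two forms $F_1^2,\dots,F_5^2$; in particular $\dim_\CC M_2(\Orth_1(2U(2)\oplus 2A_1))=5$ with basis $F_1^2,\dots,F_5^2$. Since each $F_i$ carries a quadratic character on $\Orth_1(2U(2)\oplus 2A_1)$, all ten squares $F_i^2$ ($1\le i\le 10$) have trivial character and hence lie in this five-dimensional space; as $F_1^2,\dots,F_5^2$ are linearly independent by Lemma~\ref{lem:2A1}, they form a basis of it, which gives the first claim. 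Consequently there are exactly five independent linear relations among the ten $F_i^2$, and each of $F_6^2,\dots,F_{10}^2$ is a unique $\CC$-linear combination of $F_1^2,\dots,F_5^2$.

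To exhibit these relations explicitly I would compute the first several Fourier coefficients of the ten input forms (whose principal parts are listed in Lemma~\ref{lem:2A1inputs}) using the algorithm of \cite{Wi18}, form the corresponding Borcherds products $F_1,\dots,F_{10}$, square them, and solve the resulting linear system; since $\dim M_2=5$, the evaluation of a weight-two form on finitely many Fourier coefficients is injective, so finitely many coefficients determine each relation uniquely. Rewriting the five solutions in symmetric form then yields the displayed four-term identities.

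For the last assertion, one checks from the explicit principal parts that each of the fifteen weight-two Borcherds products of Lemma~\ref{lem:2A1inputs} is a modular form of weight two with trivial character on $\Orth_1(2U(2)\oplus 2A_1)$ — its input is fixed by the reflections generating this group, and it has no zero along any of their mirrors, which forces the character to be trivial — and therefore lies in $M_2(\Orth_1(2U(2)\oplus 2A_1))=\langle F_1^2,\dots,F_5^2\rangle$. Hence the span of these fifteen products together with the ten squares $F_i^2$ is exactly this five-dimensional space, and a Fourier comparison identifies each product as an explicit combination of $F_1^2,\dots,F_5^2$. The only genuine work throughout is organizational, namely carrying enough Fourier coefficients of these Borcherds products, together with the mild verification that the fifteen weight-two products really are $\Orth_1(2U(2)\oplus 2A_1)$-modular with trivial character, which is what makes the finite coefficient checks conclusive; there is no conceptual obstacle.
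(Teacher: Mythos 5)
Your proposal is correct and follows essentially the same route as the paper: the ten squares and the fifteen weight-two products all have trivial character on $\Orth_1(2U(2)\oplus 2A_1)$ and hence lie in $M_2(\Orth_1(2U(2)\oplus 2A_1))$, which by Theorem \ref{th:Matsumoto} is the five-dimensional span of $F_1^2,\dots,F_5^2$, and the explicit relations are then read off from Fourier expansions. Your write-up merely spells out the character argument (input fixed by the generating reflections, no vanishing on their mirrors) that the paper leaves implicit.
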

\begin{proof} The exact form of the relations among the $F_i^2$ can be read off of their Fourier expansions. The 15 weight two products have trivial character on $\Orth_1(2U(2) \oplus 2A_1)$ and therefore lie in the span of $F_1^2,...,F_5^2$.
\end{proof}

We will now obtain an analogue of Igusa's theorem for Hermitian modular forms of degree 2 over the Gaussian numbers. The Jacobian $J=J(F_i^2, 1\leq i\leq 5)$ is the product of the ten $F_i$ and $\Phi_{4,2A_1}$. 
To apply Theorem \ref{th:freeJacobian} we have to show that $\Phi_{4,2A_1}$ is irreducible as a modular form on $\Orth_1(2U(2)\oplus 2A_1)$, i.e. it is not a product of two non-constant modular forms with characters (or multiplier systems) on $\Orth_1(2U(2)\oplus 2A_1)$. Since $\Phi_{4,2A_1}^2\in M_8(\Orth_1(2U(2)\oplus 2A_1))$, it can be expressed as a polynomial $P$ in terms of the five $F_i^2$.  We computed this polynomial and found that it is irreducible. From Theorem \ref{th:freeJacobian},  it follows that $\Phi_{4,2A_1}$ is irreducible on $\Orth_1(2U(2)\oplus 2A_1)$.

By Theorem \ref{th:freeJacobian}, there are exactly $2048$ characters of $\Orth_1(2U(2) \oplus 2A_1)$ and they are generated by the basic characters $\chi_i$ of $F_i$ and the character of $\Phi_{4,2A_1}$. Let $\Orth_1'(2U(2)\oplus 2A_1)$ be the commutator subgroup of $\Orth_1(2U(2)\oplus 2A_1)$. Then
$$
M_*(\Orth_1'(2U(2)\oplus 2A_1))=\CC[\Phi_{4,2A_1}, F_i, 1\leq i \leq 10].
$$

Let $\Orth_0$ be the subgroup
$$
\Orth_0(2U(2)\oplus 2A_1)=\{ \gamma\in \Orth_1(2U(2)\oplus 2A_1): \chi_i(\gamma)=1, \, 1\leq i \leq 10  \}.
$$
We have the following result immediately.
\begin{theorem}\label{th:analogue-Igusa}
$$
M_*(\Orth_0(2U(2)\oplus 2A_1))=\CC[F_i, 1\leq i\leq 10 ].
$$
\end{theorem}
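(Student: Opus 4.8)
The plan is to mimic the derivation of Igusa's theorem in the previous section (Theorem \ref{th:Igusa}), replacing the role of the type II products $\Theta_j$ by the weight-$4$ form $\Phi_{4,2A_1}$ and the role of the ten theta constants $\theta_i$ by the ten singular-weight products $F_i$. The starting point is the identification of the Jacobian $J = J(F_1^2,\dots,F_5^2)$ with $\Phi_{10,2A_1}\Phi_{4,2A_1}$ from Lemma \ref{lem:2A1}, together with the fact, verified just above the statement, that $\Phi_{4,2A_1}$ is irreducible on $\Orth_1(2U(2)\oplus 2A_1)$. By Theorem \ref{th:freeJacobian}(3)-(4) applied to the free algebra $M_*(\Orth_1(2U(2)\oplus 2A_1)) = \CC[F_1^2,\dots,F_5^2]$, the factorisation $J^2 = \Phi_{10,2A_1}^2\Phi_{4,2A_1}^2 = \prod_{i=1}^{10} F_i^2 \cdot \Phi_{4,2A_1}^2$ into irreducibles corresponds bijectively to the $\Gamma$-orbits of mirrors of reflections in $\Orth_1(2U(2)\oplus 2A_1)$, and each $F_i^2$ (resp.\ $\Phi_{4,2A_1}^2$) is the unique (up to scalar) modular form with trivial character whose divisor is twice the corresponding orbit. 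This pins down the character group: it is generated by the basic characters $\chi_i$ of the $F_i$ and the character of $\Phi_{4,2A_1}$, and has order $2^{11} = 2048$.

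Next I would carry out the eigenspace decomposition exactly as in the lead-up to Theorem \ref{th:Igusa}. Since $\Orth_1'(2U(2)\oplus 2A_1)$ is the commutator subgroup, $M_*(\Orth_1'(2U(2)\oplus 2A_1))$ decomposes as the direct sum over the $2048$ characters $\psi$ of $M_*(\Orth_1(2U(2)\oplus 2A_1),\psi)$. For each basic character $\chi_i$, multiplication by $F_i$ gives an isomorphism $M_{k}(\Orth_1,\chi\chi_i^{2k}) \xrightarrow{\ \sim\ } M_{k+1/2}(\Orth_1, \chi\chi_i^{2k+1})$ and similarly for $\Phi_{4,2A_1}$ (here one uses that $F_i$, $\Phi_{4,2A_1}$ are, up to scalar, the only forms in their respective eigenspaces with the stated divisor, so every form in the odd eigenspace is divisible by the corresponding product). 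Iterating, every modular form on $\Orth_1'(2U(2)\oplus 2A_1)$ is a $\CC[F_1^2,\dots,F_5^2]$-linear combination of the squarefree monomials in $F_1,\dots,F_{10},\Phi_{4,2A_1}$; using the linear relations among the $F_i^2$ from Corollary \ref{cor:Matsumoto} to re-express $F_6^2,\dots,F_{10}^2$ in terms of $F_1^2,\dots,F_5^2$ shows this is exactly $\CC[\Phi_{4,2A_1}, F_1,\dots,F_{10}]$, giving the first displayed equality. (Strictly speaking this intermediate identity should be recorded, as the paper does just before the theorem statement.) Then, passing to the subgroup $\Orth_0(2U(2)\oplus 2A_1)$ on which all $\chi_i$ are trivial while the character of $\Phi_{4,2A_1}$ may remain nontrivial, the same decomposition — now over the quotient group $\Orth_0/\Orth_1'$, which is generated by the image of a reflection associated to the divisor of $\Phi_{4,2A_1}$ — removes $\Phi_{4,2A_1}$ as a generator: every $\Orth_0$-modular form lies in the trivial eigenspace for that last character, hence is a polynomial in the $F_i$ alone. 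This yields $M_*(\Orth_0(2U(2)\oplus 2A_1)) = \CC[F_i, 1\le i\le 10]$.

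The main obstacle is the step asserting that $\Orth_0(2U(2)\oplus 2A_1)$ is generated over $\Orth_1'(2U(2)\oplus 2A_1)$ by (the image of) a single reflection of $\Phi_{4,2A_1}$-type, so that the quotient $\Orth_0/\Orth_1'$ is cyclic of order $2$ and the eigenspace sum has only two summands. This requires knowing that the character of $\Phi_{4,2A_1}$, restricted to $\Orth_0$, does not collapse to the trivial character — equivalently, that $\Orth_0$ genuinely contains a reflection on which this character is $-1$ — and that no further characters survive on $\Orth_0$. This is the analogue of the argument in the $2U(4)\oplus A_1$ case showing $\sigma_1\cdots\sigma_{10}\in\Orth_0$ but $\Orth_0\supsetneq\Orth_1'$; here one checks that a reflection $\sigma$ associated to the divisor of $\Phi_{4,2A_1}$ has $\chi_i(\sigma)=1$ for all $i$ (since the input of each $F_i$ is $\sigma$-invariant, as noted in Lemma \ref{lem:2A1inputs}, so $\sigma$ fixes each $F_i$) and hence $\sigma\in\Orth_0$, while $\Phi_{4,2A_1}$ itself has a nontrivial character there; the fact that $\Phi_{4,2A_1}$ is irreducible guarantees there is nothing left to quotient by. Everything else — the non-vanishing of $J$, the linear relations, the irreducibility of the defining polynomial of $\Phi_{4,2A_1}^2$ — is either already established in the excerpt or a finite Fourier-coefficient computation.
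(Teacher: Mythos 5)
Your proposal is correct and follows essentially the same route as the paper, which establishes exactly the ingredients you list (the factorization $J=\Phi_{10,2A_1}\Phi_{4,2A_1}$, the irreducibility of $\Phi_{4,2A_1}$, the $2048$ characters generated by the $\chi_i$ and the character of $\Phi_{4,2A_1}$, and the identity $M_*(\Orth_1'(2U(2)\oplus 2A_1))=\CC[\Phi_{4,2A_1},F_1,\dots,F_{10}]$) and then deduces the theorem "immediately" via precisely the eigenspace decomposition over $\langle\chi_1,\dots,\chi_{10}\rangle$ that you spell out, including the observation that a reflection in the divisor of $\Phi_{4,2A_1}$ lies in $\Orth_0$ but not in $\Orth_1'$. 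The only slip is cosmetic: the $F_i$ have singular weight $1$, not $1/2$, so multiplication by $F_i$ raises the weight by $1$ and the $\chi_i$ are genuine quadratic characters rather than half-integral-weight multiplier systems.
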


Similarly, we define another subgroup of $\Orth_1(2U(2)\oplus 2A_1)$ via
$$
\Orth_{1'}(2U(2)\oplus 2A_1)=\{ \gamma\in \Orth_1(2U(2)\oplus 2A_1): \chi_i(\gamma)=1, \; 1\leq i \leq 5  \}.
$$
The group $\Orth_{1'}(2U(2)\oplus 2A_1)$ contains all reflections associated to the divisor of $\Phi_{4,2A_1}\prod_{j=6}^{10}F_j$. The forms $F_i$, $1\leq i\leq 5$, are modular with trivial character on $\Orth_{1'}(2U(2)\oplus 2A_1)$ and their Jacobian equals $\Phi_{4,2A_1}\prod_{j=6}^{10}F_j$ up to a non-zero constant multiple by Lemma \ref{lem:2A1}. Therefore we obtain the following result.
\begin{theorem}\label{th:projective4}
\begin{align*}
M_*(\Orth_{1'}(2U(2)\oplus 2A_1))&=\CC[F_i, 1\leq i \leq 5],\\
(\cD_{4}/ \Orth_{1'}(2U(2)\oplus 2A_1))^* &\cong \PP^4(\CC).
\end{align*}
\end{theorem}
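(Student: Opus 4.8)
The plan is to apply Theorem~\ref{th:converseJacobian} to the five weight-one forms $F_1,\dots,F_5$. First I would record that these are genuine integral-weight modular forms with \emph{trivial} character on $\Orth_{1'}(2U(2)\oplus 2A_1)$: by definition this group is the subgroup of $\Orth_1(2U(2)\oplus 2A_1)$ on which the basic characters $\chi_1,\dots,\chi_5$ are trivial, and $\chi_i$ is exactly the (quadratic) character of $F_i$. Next I would compute the Jacobian $J(F_1,\dots,F_5)$ by relating it to the Jacobian of the squares: pulling a factor $2F_i$ out of the $i$-th column of the defining determinant gives $J(F_1^2,\dots,F_5^2)=2^5(F_1\cdots F_5)\,J(F_1,\dots,F_5)$, and Lemma~\ref{lem:2A1} identifies the left side with $\Phi_{10,2A_1}\Phi_{4,2A_1}=(F_1\cdots F_{10})\,\Phi_{4,2A_1}$ up to a nonzero scalar. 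Cancelling $F_1\cdots F_5$ yields $J(F_1,\dots,F_5)=c\,\Phi_{4,2A_1}\prod_{j=6}^{10}F_j$ with $c\neq 0$; equivalently, by Theorem~\ref{th:Jacobian}(2), the forms $F_1,\dots,F_5$ are algebraically independent. As a sanity check, both sides have weight $4+\sum_{i=1}^5 1=9$.

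The heart of the argument is then to check the hypothesis of Theorem~\ref{th:converseJacobian}: that $\div J$ equals the sum of the mirrors of all reflections in $\Orth_{1'}(2U(2)\oplus 2A_1)$, each with multiplicity one. From the principal parts in Lemma~\ref{lem:2A1inputs} and Borcherds' product formula one reads off $\div\Phi_{4,2A_1}=\cD_{r_0}$ with $r_0=(0,0,\tfrac12,\tfrac12,0,0)$, and $\div F_j=\cD_{v_1^{(j)}}+\cD_{v_2^{(j)}}$ for $6\le j\le 10$, all with multiplicity one; inspection of the explicit vectors shows the eleven mirrors so obtained are pairwise distinct, so $\div J$ is reduced. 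Each of these eleven mirrors is the mirror of a reflection lying in $\Orth_{1'}(2U(2)\oplus 2A_1)$ — this is the assertion, made just before the theorem, that $\Orth_{1'}(2U(2)\oplus 2A_1)$ contains every reflection associated to $\div(\Phi_{4,2A_1}\prod_{j=6}^{10}F_j)$. It reduces to showing that such a reflection $\sigma$ fixes the vector-valued input of each $F_i$, $1\le i\le 5$: for then $\sigma(F_i)=\pm F_i$, and since $\sigma^2=1$ and the mirror of $\sigma$ is not a component of $\div F_i$ the sign must be $+$, i.e. $\chi_i(\sigma)=1$. For $\sigma_{r_0}=\sigma_1$ this is immediate, since each input $q^{-1/4}(e_{v_1}+e_{v_2})+2e_0$ of $F_i$ is by construction invariant under the swap of the two $A_1$-components; for the reflections $\sigma_{v_k^{(j)}}$ with $j\ge 6$ it is a finite verification on the listed principal parts, most easily done by computer. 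Conversely, Theorem~\ref{th:Jacobian}(3) shows that $J$ vanishes on the mirror of every reflection in $\Orth_{1'}(2U(2)\oplus 2A_1)$, so those mirrors are automatically among the components of $\div J$. The two inclusions give equality.

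With the hypothesis verified, Theorem~\ref{th:converseJacobian} yields $M_*(\Orth_{1'}(2U(2)\oplus 2A_1))=\CC[F_1,\dots,F_5]$ (and, as a byproduct, that $\Orth_{1'}(2U(2)\oplus 2A_1)$ is generated by the reflections $\sigma_{r_0}$ and $\sigma_{v_k^{(j)}}$, $6\le j\le 10$). Since this is a polynomial algebra on five algebraically independent generators all of weight one, $(\cD_4/\Orth_{1'}(2U(2)\oplus 2A_1))^*=\Proj M_*(\Orth_{1'}(2U(2)\oplus 2A_1))\cong\PP^4(\CC)$ by \cite{BB66}. I expect the only real obstacle to be the combinatorial bookkeeping in the middle step — distinctness of the eleven mirrors in $\div J$ and, above all, the check that every reflection attached to $\div(\Phi_{4,2A_1}\prod_{j=6}^{10}F_j)$ fixes the inputs of $F_1,\dots,F_5$; everything else is formal given Lemma~\ref{lem:2A1} and the structure theorems of \S\ref{sec:preliminaries}.
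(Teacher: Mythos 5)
Your proposal is correct and follows essentially the same route as the paper: both derive $J(F_1,\dots,F_5)=c\,\Phi_{4,2A_1}\prod_{j=6}^{10}F_j$ from Lemma~\ref{lem:2A1} by factoring the Jacobian of the squares, observe that $F_1,\dots,F_5$ have trivial character on $\Orth_{1'}(2U(2)\oplus 2A_1)$ and that this group contains the reflections attached to $\div J$, and then invoke Theorem~\ref{th:converseJacobian}. Your write-up simply makes explicit the verifications (distinctness of the mirrors, triviality of $\chi_i$ on the relevant reflections) that the paper leaves implicit.
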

The above theorem also implies that $\Orth_{1'}(2U(2)\oplus 2A_1)$ is generated by all reflections associated to the divisor of $\Phi_{4,2A_1}\prod_{j=6}^{10}F_j$. (Note that the same argument applies with $F_1,...,F_5$ replaced by any of the 162 linearly independent sets of five squares of products $F_i^2$. These sets can be read off of the four-term relations of Corollary \ref{cor:Matsumoto}.)

We define a larger subgroup of $\Orth_1(2U(2)\oplus 2A_1)$ via
$$
\Orth_{1''}(2U(2)\oplus 2A_1)=\{ \gamma\in \Orth_1(2U(2)\oplus 2A_1): \chi_1(\gamma)=1\},
$$
i.e. the subgroup generated by all reflections associated to the divisor of $\Phi_{4,2A_1}\prod_{j=2}^{10}F_j$.
A similar argument to the above theorem yields the following result.
\begin{theorem}\label{th:projective5}
\begin{align*}
M_*(\Orth_{1''}(2U(2)\oplus 2A_1))&=\CC[F_1, F_2^2, F_3^2, F_4^2, F_5^2],\\
(\cD_{4}/ \Orth_{1''}(2U(2)\oplus 2A_1))^* &\cong\PP(1,2,2,2,2) \cong \PP^4(\CC).
\end{align*}
\end{theorem}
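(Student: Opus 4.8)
The plan is to apply Theorem~\ref{th:converseJacobian} directly to the group $\Gamma:=\Orth_{1''}(2U(2)\oplus 2A_1)=\ker\bigl(\chi_1|_{\Orth_1(2U(2)\oplus 2A_1)}\bigr)$ with the five forms $F_1,F_2^2,F_3^2,F_4^2,F_5^2$ of weights $1,2,2,2,2$. First I would observe that these have trivial character on $\Gamma$: the forms $F_j^2$ already have trivial character on $\Orth_1(2U(2)\oplus 2A_1)$, while $F_1$ transforms there by $\chi_1$, which is trivial on $\ker\chi_1$. To compute the Jacobian I would use the elementary identity obtained by pulling the common factor $2F_1$ out of the first column of the Jacobian matrix,
$$
J(F_1^2,F_2^2,F_3^2,F_4^2,F_5^2)=2F_1\cdot J(F_1,F_2^2,F_3^2,F_4^2,F_5^2),
$$
together with Lemma~\ref{lem:2A1}, which identifies the left-hand side with a nonzero constant multiple of $\Phi_{10,2A_1}\Phi_{4,2A_1}=\Phi_{4,2A_1}\prod_{j=1}^{10}F_j$; cancelling $F_1$ yields
$$
J(F_1,F_2^2,F_3^2,F_4^2,F_5^2)=c\,\Phi_{4,2A_1}\prod_{j=2}^{10}F_j,\qquad c\neq 0 .
$$
In particular $J\not\equiv 0$, so the five forms are algebraically independent.

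The substance of the argument is to show that this Jacobian vanishes \emph{exactly} on the mirrors of reflections in $\Gamma$, each with multiplicity one. The divisor on the right is the sum of $19$ rational quadratic divisors: the divisor $\cD_u$ with $u=(0,0,1/2,1/2,0,0)$, of norm $1$, coming from $\Phi_{4,2A_1}$, and the two divisors $\cD_{v_1^{(j)}},\cD_{v_2^{(j)}}$, of norm $1/2$, of each $F_j$ for $2\le j\le 10$. I would check from the explicit principal parts listed in Lemma~\ref{lem:2A1inputs} that these $19$ divisors are pairwise distinct — the norm separates $\cD_u$ from the rest, and the listed discriminant-form representatives are pairwise non-proportional primitive vectors — so that $\operatorname{div}J$ is reduced. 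For the word ``exactly'' I would argue in both directions. Any reflection $\sigma\in\Gamma$ satisfies $J(\sigma\mathcal Z)=\det(\sigma)J(\mathcal Z)=-J(\mathcal Z)$ because $J$ carries the determinant character on $\Gamma$, so $J$ vanishes on the mirror of $\sigma$; as $\operatorname{div}J$ is reduced and a reflection is determined by its mirror, this mirror must be one of the $19$ divisors above. Conversely, each of the $19$ reflections $\sigma$ attached to these divisors lies in $\Gamma$: such a $\sigma$ is associated to the divisor of $\Phi_{4,2A_1}$ or of some $F_j$ with $2\le j\le 10$, hence lies in $\Orth_1(2U(2)\oplus 2A_1)$, on which $F_1$ transforms by $\chi_1$; if $\chi_1(\sigma)=-1$ then $F_1$ would vanish on the mirror of $\sigma$, forcing that mirror into $\operatorname{div}F_1=\cD_{v_1^{(1)}}+\cD_{v_2^{(1)}}$ and contradicting the distinctness of the $\cD_{v_i^{(j)}}$; hence $\chi_1(\sigma)=1$ and $\sigma\in\Gamma$. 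I expect this bookkeeping — confirming the $19$ divisors are genuinely distinct, so that $\ker\chi_1$ contains precisely these reflections — to be the only step requiring real care; the rest closely follows the proofs of Theorems~\ref{th:Matsumoto} and \ref{th:projective4}.

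Granting this, Theorem~\ref{th:converseJacobian} applies and gives $M_*(\Gamma)=\CC[F_1,F_2^2,F_3^2,F_4^2,F_5^2]$, and it also shows that $\Gamma$ is generated by the $19$ reflections above, which is the description of $\Orth_{1''}(2U(2)\oplus 2A_1)$ stated in the setup. Finally $(\cD_4/\Gamma)^*=\Proj\CC[F_1,F_2^2,F_3^2,F_4^2,F_5^2]=\PP(1,2,2,2,2)$, and this is isomorphic to $\PP^4(\CC)$: since $\Proj$ is unchanged under the Veronese map, we may pass to the second Veronese subalgebra, which is the polynomial ring $\CC[F_1^2,F_2^2,F_3^2,F_4^2,F_5^2]$ in five generators of equal weight.
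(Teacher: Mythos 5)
Your proposal is correct and follows essentially the same route as the paper, which proves this theorem by the phrase ``a similar argument to the above theorem'': one checks that $F_1,F_2^2,\dots,F_5^2$ have trivial character on $\Orth_{1''}(2U(2)\oplus 2A_1)$, identifies their Jacobian with $\Phi_{4,2A_1}\prod_{j=2}^{10}F_j$ by factoring $2F_1$ out of the Jacobian from Lemma~\ref{lem:2A1}, and applies Theorem~\ref{th:converseJacobian} together with the Veronese argument for $\PP(1,2,2,2,2)\cong\PP^4(\CC)$. Your additional bookkeeping (that the reflections of $\Orth_1(2U(2)\oplus 2A_1)$ lying in $\ker\chi_1$ are exactly those attached to the divisor of $\Phi_{4,2A_1}\prod_{j=2}^{10}F_j$) correctly fills in the detail the paper leaves implicit.
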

The ring of even-weight modular forms for $\Orth_{1''}(2U(2)\oplus 2A_1)$ is freely generated in weight two (and indeed is exactly the ring from Theorem \ref{th:Matsumoto}):
$$
M_{2*}(\Orth_{1''}(2U(2)\oplus 2A_1))^{\text{even}} = \CC[F_1^2,F_2^2,F_3^2,F_4^2,F_5^2].
$$

\begin{remark}\label{rem:irreducible-Phi4}
There is a six-dimensional space of modular forms of weight $1$ for the Weil representation attached to $2U(2)\oplus 2A_1$, and these map to a five-dimensional space of modular forms of weight $2$ with trivial character on the discriminant kernel $\widetilde{\Orth}^+(2U(2)\oplus 2A_1)$ under the additive theta lift. Applying Theorem \ref{th:converseJacobian} to $\widetilde{\Orth}^+(2U(2)\oplus 2A_1)$, we find that $M_*(\widetilde{\Orth}^+(2U(2)\oplus 2A_1))$ is freely generated by the five additive lifts. Moreover, $\widetilde{\Orth}^+(2U(2)\oplus 2A_1)$ is generated by all 2-reflections and then  
$$
\widetilde{\Orth}^+(2U(2)\oplus 2A_1)=\Orth_1(2U(2)\oplus 2A_1).
$$
By the Eichler criterion (cf. Proposition 3.3 of \cite{GHS2009}), the divisor of $\Phi_{4,2A_1}$ is irreducible on $\widetilde{\Orth}^+(2U(2)\oplus 2A_1)$.  In this way we obtain a different proof that $\Phi_{4,2A_1}$ is irreducible on $\Orth_1(2U(2)\oplus 2A_1)$. We also see that the square of each of the singular weight products is an additive lift.
\end{remark}

\begin{remark}
By \cite[Lemma 6.1]{GN18}, we have
$$
\Orth^+(2U(2)\oplus 2A_1)\cong \Orth^+(2U\oplus 2A_1).
$$
Thus $\Phi_{10,2A_1}$ can be regarded as a modular form on $2U\oplus 2A_1$. In the interpretation as Hermitian modular forms this is the Borcherds product $\phi_{10}$ in \cite[Corollary 4]{DK03}, which was realized earlier by Freitag \cite{Fre67} as the product of ten theta constants. Indeed, our $F_i$ are exactly the ten theta constants when interpreted as Hermitian modular forms.
\end{remark}

\section{The \texorpdfstring{$2U(3)\oplus A_2$}{} lattice}
In this section we reprove the theorem of Freitag and Salvati Manni in the context of modular forms on $2U(3)\oplus A_2$. We first work out some Borcherds products on $2U(3)\oplus A_2$:
\vspace{2mm}
\begin{enumerate}
\item There are 45 holomorphic Borcherds products of singular weight 1. Their inputs have principal parts of the form
$$
q^{-1/3} (e_v+e_{-v}) + 2e_0, \quad (v,v)=2/3, \; \ord(v)=3.
$$
The product of these 45 forms is a reflective modular form $\Phi_{45,A_2}$ of weight 45 which can be viewed as a 2-reflective modular form for $\Orth^+(2U\oplus A_2)$. 
\item There is a holomorphic Borcherds product of weight 9 whose input has principal part
$$
(q^{-1}+18)e_0.
$$
We label this form $\Phi_{9,A_2}$. It is a 2-reflective modular form on $\Orth^+(2U(3)\oplus A_2)$ and can be regarded as a reflective modular form for $\Orth^+(2U\oplus A_2)$. 
\end{enumerate}
\vspace{2mm}

Let $\Orth_1(2U(3)\oplus A_2)$ be the subgroup of $\Orth^+(2U(3)\oplus A_2)$ generated by all 2-reflections, i.e. reflections associated to the divisor of $\Phi_{9,A_2}$. It is clear that $\Orth_1(2U(3)\oplus A_2)$ is a subgroup of $\widetilde{\Orth}^+(2U(3)\oplus A_2)$. By considering their divisors, we see that the 45 weight $1$ products have trivial character on $\Orth_1(2U(3)\oplus A_2)$.

It will again be convenient to use the notion of $*$-sets.
\begin{definition}
A $*$-set (of type $2U(3)\oplus A_2$) is a set of five products of weight $1$ on $2U(3)\oplus A_2$ whose inputs are invariant under all reflections associated to the divisors of any of the five products.
\end{definition}

$*$-sets of type $2U(3) \oplus A_2$ satisfy properties analogous to Lemma \ref{lem:*-sets}:

\begin{lemma}
\noindent
\begin{enumerate}
\item There are exactly $27$ $*$-sets.
\item Every product of weight $1$ belongs to exactly three $*$-sets.
\item There are $720$ pairs of weight $1$ products that do not belong to a $*$-set. The remaining $270$ pairs belong to a unique $*$-set.
\item The five elements of any $*$-set are linearly independent over $\CC$.
\end{enumerate}
\end{lemma}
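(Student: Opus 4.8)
The plan is to follow the proof of Lemma \ref{lem:*-sets} closely. For parts (1)--(3) I would work with the \emph{compatibility graph} $G$ whose $45$ vertices are the weight-one Borcherds products on $2U(3)\oplus A_2$, with an edge between two products precisely when each of their input forms is invariant under every reflection associated to the divisor of the other. If the $i$-th product has input with principal part $q^{-1/3}(e_{v_i}+e_{-v_i})+2e_0$, this edge condition amounts to $\sigma_{v_i}(v_j)\in\pm v_j+(2U(3)\oplus A_2)$ for the relevant discriminant classes, a finite check. Moreover, a weakly holomorphic vector-valued modular form of negative weight is determined by its principal part, and when $\sigma_{v_i}(v_j)\in\pm v_j+(2U(3)\oplus A_2)$ the reflection $\sigma_{v_i}$ fixes the principal part $q^{-1/3}(e_{v_j}+e_{-v_j})+2e_0$ of the $j$-th input (it fixes $e_0$ and either fixes or swaps $e_{v_j}$ and $e_{-v_j}$); hence a five-element set of products is a $*$-set exactly when it is a clique in $G$. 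I would then compute $G$ and enumerate its maximal cliques, verifying by machine that every maximal clique has exactly five vertices, that there are $27$ of them, that every vertex lies in exactly three, and that of the $\binom{45}{2}=990$ pairs of products exactly $270=27\binom{5}{2}$ lie in a (necessarily unique) clique while the other $720$ lie in no clique. This enumeration is where the genuine work lies; everything else is formal.

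As a consistency check on these numbers, the parameters $(45,27,5,3)$ together with the split $270/720$ of pairs into collinear and non-collinear are exactly those of a generalized quadrangle of order $(4,2)$. Since the discriminant form of $2U(3)\oplus A_2$ is a five-dimensional quadratic space over $\mathbb{F}_3$, one could in principle identify this incidence structure with that geometry and obtain a computation-free proof of (1)--(3); but the direct enumeration is entirely adequate here, just as for Lemma \ref{lem:*-sets}.

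For part (4) I would argue exactly as in Lemma \ref{lem:*-sets}(4). Let $\{f_1,\dots,f_5\}$ be a $*$-set and let $\Gamma_*$ be the subgroup of $\Orth^+(2U(3)\oplus A_2)$ generated by $\Orth_1(2U(3)\oplus A_2)$ together with reflections $\sigma_1,\dots,\sigma_5$ attached to the divisors of $f_1,\dots,f_5$. By the $*$-set property each of the five input forms is fixed by every $\sigma_i$, and $2$-reflections act trivially on the discriminant form, so every element of $\Gamma_*$ fixes each of the five inputs; hence each $f_j$ satisfies $f_j(\gamma\mathcal{Z})=\chi_j(\gamma)f_j(\mathcal{Z})$ for a character $\chi_j\colon\Gamma_*\to\CC^*$. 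Evaluating on the affine cone over the mirror of $\sigma_i$, where $\sigma_i$ acts as the identity and where $f_j$ with $j\ne i$ does not vanish identically (the mirrors being pairwise distinct), we get $\chi_j(\sigma_i)=1$ for $j\ne i$; comparing leading terms of $f_i$ transverse to its own mirror, on which $f_i$ vanishes to order one, we get $\chi_i(\sigma_i)=-1$. Thus $\sigma_i(f_i)=-f_i$ and $\sigma_i(f_j)=f_j$ for $j\ne i$, and if $\sum_{j=1}^5 c_j f_j=0$ then applying $\sigma_i$ and subtracting gives $2c_i f_i=0$, so $c_i=0$; the five products are therefore linearly independent. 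The only subtle point here is to pin the scalars $\sigma_i(f_j)$ down to $\pm1$ rather than to arbitrary roots of unity, which is exactly what working on the cone $\mathcal{A}(2U(3)\oplus A_2)$, where forms are literally invariant up to a character, makes transparent.
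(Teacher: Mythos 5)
Your proposal is correct and follows essentially the same route as the paper, which proves this lemma implicitly by appeal to the proof of Lemma \ref{lem:*-sets}: computer enumeration of maximal cliques in the compatibility graph (the data appearing in Appendix B and Figure \ref{fig:A2}) for parts (1)--(3), and the reflection-eigenvector argument $\sigma_i(f_i)=-f_i$, $\sigma_i(f_j)=f_j$ for part (4). Your added justification of the signs via characters of $\Gamma_*$ and the aside identifying the incidence structure with a generalized quadrangle of order $(4,2)$ are consistent with, but not needed beyond, what the paper does.
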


\begin{lemma}
There is a $*$-set $\{G_1, G_2,G_3,G_4,G_5\}$ whose Jacobian equals $\Phi_{9,A_2}$ up to a non-zero constant multiple.
\end{lemma}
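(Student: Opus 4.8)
The plan is to follow the pattern of Lemma~\ref{lem:2U4A1jacobian} and Lemma~\ref{lem:2A1}. First I would fix one of the $27$ $*$-sets of type $2U(3)\oplus A_2$ explicitly and compute enough terms of the Fourier expansions of its five elements $G_1,\dots,G_5$ at a zero-dimensional cusp to evaluate the leading Fourier coefficients of the Jacobian $J=J(G_1,\dots,G_5)$. Since the $G_i$ are Borcherds products their Fourier coefficients are explicitly computable, so this is a finite computation whose only purpose is to certify that $J\not\equiv 0$; by Theorem~\ref{th:Jacobian}(2) this is equivalent to the algebraic independence of $G_1,\dots,G_5$ over $\CC$. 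This verification is the only non-formal ingredient, and it is the main obstacle in the sense that one needs reliable Fourier data --- in practice it is routine given the SAGE implementation used throughout the paper, and indeed the linear independence of $*$-sets already recorded above makes algebraic independence very plausible a priori.

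Granting $J\not\equiv 0$, the remainder is formal and mirrors the argument of Lemma~\ref{lem:2U4A1jacobian}. The five products $G_i$ are modular forms of weight $1$ with trivial character on $\Orth_1(2U(3)\oplus A_2)$, so by Theorem~\ref{th:Jacobian}(1) the Jacobian $J$ is a cusp form of weight $n+\sum_{i=1}^{5}k_i = 4+5 = 9$ for $\Orth_1(2U(3)\oplus A_2)$ with the determinant character $\det$. By Theorem~\ref{th:Jacobian}(3), $J$ vanishes on the mirror $\cD_r(2U(3)\oplus A_2)$ of every reflection $\sigma_r\in\Orth_1(2U(3)\oplus A_2)$. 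Since $\Orth_1(2U(3)\oplus A_2)$ is by definition generated by the $2$-reflections, whose mirrors are exactly the components of $\div(\Phi_{9,A_2})$, each occurring with multiplicity one, this gives $\div(J)\geq\div(\Phi_{9,A_2})$.

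Consequently $J/\Phi_{9,A_2}$ is a holomorphic modular form of weight $9-9=0$ on $\Orth_1(2U(3)\oplus A_2)$, hence a constant, and this constant is non-zero because $J\not\equiv 0$. Therefore $J(G_1,\dots,G_5)$ equals $\Phi_{9,A_2}$ up to a non-zero scalar multiple, which proves the lemma. Finally I would remark, as in the earlier sections, that the same computation may be run for any of the $27$ $*$-sets; those with non-vanishing Jacobian are precisely the ones that will yield free algebras of modular forms in the applications that follow.
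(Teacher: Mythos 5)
Your proposal is correct and follows essentially the same route as the paper: verify numerically that the Jacobian of an explicit $*$-set (the paper uses $G_1, G_{14}, G_{15}, G_{23}, G_{27}$, computed to precision $10$) is not identically zero, then use Theorem \ref{th:Jacobian} to show $J/\Phi_{9,A_2}$ is a holomorphic form of weight zero and hence a nonzero constant. The only difference is presentational; the paper leaves the weight count and the divisor comparison implicit by citing the argument of Lemma \ref{lem:2U4A1jacobian}, which you have simply written out.
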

\begin{proof} In the notation of Appendix B, we used the forms $G_1, G_{14}, G_{15}, G_{23}, G_{27}$ which form a $*$-set. We computed their Jacobian to precision $10$ and found that it is not identically zero (and indeed agrees with the Fourier expansion of $\Phi_{9, A_2}$). By the same argument as Lemma \ref{lem:2U4A1jacobian}, the quotient $J / \Phi_{9, A_2}$ is holomorphic of weight zero and therefore constant.
\end{proof}
By applying Theorem \ref{th:converseJacobian} we obtain the theorem of Freitag and Salvati Manni in the context of orthogonal groups.
\begin{theorem}\label{th:FSM} The $45$ singular-weight products on $2U(3) \oplus A_2$ span a five-dimensional space over $\CC$. Any five linearly independent products $G_1,...,G_5$ are algebraically independent and generate the algebra of modular forms:
\begin{align*}
M_*(\Orth_1(2U(3)\oplus A_2)) &=\CC[G_1,G_2,G_3,G_4,G_5],\\
(\cD_{4}/ \Orth_1(2U(3)\oplus A_2))^* &\cong \PP^4(\CC).
\end{align*}
\end{theorem}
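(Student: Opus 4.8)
The plan is to derive the theorem directly from Theorem \ref{th:converseJacobian}, applied to $\Gamma=\Orth_1(2U(3)\oplus A_2)$ together with the five forms $G_1,\dots,G_5$ furnished by the lemma just stated, whose Jacobian is a nonzero constant multiple of $\Phi_{9,A_2}$. Two hypotheses must be checked. First, each $G_i$ is a holomorphic modular form of weight $1$ with \emph{trivial} character on $\Orth_1(2U(3)\oplus A_2)$; this was noted above, the point being that the divisor of a singular-weight product is a sum of rational quadratic divisors attached to vectors of norm $2/3$, so it meets no mirror of a reflection in $\Orth_1(2U(3)\oplus A_2)$, while the value of a character on a reflection $\sigma_r$ is $(-1)$ raised to the order of vanishing of the form along $\cD_r$. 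Second, the Jacobian $J(G_1,\dots,G_5)=c\,\Phi_{9,A_2}$ must vanish, with multiplicity one, exactly on the union of the mirrors of reflections in $\Orth_1(2U(3)\oplus A_2)$. This is the heart of the argument: $\Phi_{9,A_2}$ is the Borcherds lift of an input with principal part $(q^{-1}+18)e_0$, so the coefficient $18$ accounts for the weight $9$ and the single term $q^{-1}e_0$, of coefficient $1$, shows that its divisor is the sum, each with multiplicity one, of the divisors $\cD_\lambda$ for $\lambda\in 2U(3)\oplus A_2$ with $(\lambda,\lambda)=2$. These $2$-reflections are precisely the generators of $\Orth_1(2U(3)\oplus A_2)$, and since that group lies in the discriminant kernel $\widetilde{\Orth}^+(2U(3)\oplus A_2)$ — in which, as recalled in Section \ref{sec:preliminaries}, a primitive $\lambda$ produces a reflection exactly when $(\lambda,\lambda)=2$ — it contains no further reflections. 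Theorem \ref{th:converseJacobian} then yields $M_*(\Orth_1(2U(3)\oplus A_2))=\CC[G_1,\dots,G_5]$ and recovers the fact that $\Orth_1(2U(3)\oplus A_2)$ is generated by these $2$-reflections.

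It remains to pass from this particular $*$-set to an arbitrary linearly independent quintuple and to identify the compactification. Because the algebra is freely generated by five forms, all of weight $1$, the weight-one subspace $M_1(\Orth_1(2U(3)\oplus A_2))$ equals $\CC G_1\oplus\dots\oplus \CC G_5$, of dimension $5$. All $45$ singular-weight products are holomorphic of weight $1$ with trivial character on $\Orth_1(2U(3)\oplus A_2)$, hence lie in this space; and since the $*$-set $\{G_1,\dots,G_5\}$ is among them and is linearly independent (by the combinatorial lemma on $*$-sets of type $2U(3)\oplus A_2$), the $45$ products span exactly a five-dimensional space. If $G_1',\dots,G_5'$ are any five linearly independent products, they form a basis of $M_1(\Orth_1(2U(3)\oplus A_2))$, hence are obtained from $G_1,\dots,G_5$ by an invertible linear substitution, so $\CC[G_1',\dots,G_5']=\CC[G_1,\dots,G_5]=M_*(\Orth_1(2U(3)\oplus A_2))$; as this ring has Krull dimension $n+1=5$, the five forms are algebraically independent. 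Finally, all generators having weight $1$,
$$
(\cD_4/\Orth_1(2U(3)\oplus A_2))^*=\Proj M_*(\Orth_1(2U(3)\oplus A_2))=\Proj\CC[G_1,\dots,G_5]\cong\PP^4(\CC).
$$

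The only genuinely nontrivial ingredient is the non-vanishing of the specific Jacobian, which is assumed — and was verified by a Fourier-coefficient computation — in the preceding lemma; given that, the rest is formal. The step I would be most careful about is the identification of $\mathrm{div}(\Phi_{9,A_2})$ with the full mirror arrangement of $\Orth_1(2U(3)\oplus A_2)$, each mirror occurring exactly once: multiplicity one comes from the coefficient $1$ of $q^{-1}e_0$ in the input, while the equality of this divisor with the set of all mirrors of reflections in $\Orth_1(2U(3)\oplus A_2)$ uses both the definition of that group and the characterization of reflections in the discriminant kernel. Once this is in place, Theorem \ref{th:converseJacobian} applies immediately, and the remaining assertions follow from elementary linear algebra and the invariance of $\Proj$ under rescaling the grading.
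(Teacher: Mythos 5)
Your proposal is correct and follows essentially the same route as the paper: the preceding lemma gives a $*$-set whose Jacobian is a nonzero multiple of $\Phi_{9,A_2}$, Theorem \ref{th:converseJacobian} then yields the free generation, and the passage to an arbitrary linearly independent quintuple is the linear-algebra observation that all $45$ products lie in the five-dimensional space $M_1(\Orth_1(2U(3)\oplus A_2))$. The paper leaves these verifications implicit, and your filling-in (trivial characters from the divisors, multiplicity one from the coefficient of $q^{-1}e_0$, identification of the mirror arrangement with $\div(\Phi_{9,A_2})$) matches the intended argument.
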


We also have the following analogue of Perna's theorem.
\begin{theorem}\label{th:FSMPerna}
Let $\{G_1,...,G_5\}$ be a $*$-set and let $\Orth_2(2U(3)\oplus A_2)$ be the subgroup generated by $\Orth_1(2U(3)\oplus A_2)$ and the reflections associated to the divisor of $\prod_{j=1}^5G_j$. Then
\begin{align*}
M_*(\Orth_2(2U(3)\oplus A_2)) &=\CC[G_1^2,G_2^2,G_3^2,G_4^2,G_5^2],\\
(\cD_{4}/ \Orth_2(2U(3)\oplus A_2))^* &\cong \PP^4(\CC).
\end{align*}
\end{theorem}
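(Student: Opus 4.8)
The plan is to imitate the proof of Theorem~\ref{th:Perna}, with the quintuple $\{G_1,\dots,G_5\}$ in place of the quadruple of Type~II products and $\Phi_{9,A_2}$ in place of $\Phi_{5,A_1}$. Write $q^{-1/3}(e_{v_i}+e_{-v_i})+2e_0$ for the principal part of the input form of $G_i$. First I would note that the $*$-set condition says the reflection $\sigma_{v_i}$ associated to the divisor of $G_i$ preserves the input forms of the other four products; since $G_i$ vanishes to order one along $\cD_{v_i}$ while the $G_j$ with $j\neq i$ do not vanish there, this gives $\sigma_{v_i}(G_i)=-G_i$ and $\sigma_{v_i}(G_j)=G_j$. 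As $\Orth_2(2U(3)\oplus A_2)$ is generated by $\Orth_1(2U(3)\oplus A_2)$ — on which every $G_j$ has trivial character — together with the $\sigma_{v_i}$, each $G_j$ carries a quadratic character on $\Orth_2(2U(3)\oplus A_2)$, so that $G_1^2,\dots,G_5^2$ are modular forms of weight $2$ with trivial character on this group.

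Next I would compute the Jacobian of the squares: pulling $2G_j$ out of the $j$-th column of the defining determinant and using $k_j=1$ for each $G_j$ gives
\[
J(G_1^2,\dots,G_5^2)=2^5\Big(\prod_{j=1}^5 G_j\Big)\,J(G_1,\dots,G_5).
\]
By Theorem~\ref{th:FSM} any $*$-set is a system of free generators of $M_*(\Orth_1(2U(3)\oplus A_2))$, so Theorem~\ref{th:freeJacobian}(2) forces $\div J(G_1,\dots,G_5)$ to be the reduced sum of all mirrors of reflections in $\Orth_1(2U(3)\oplus A_2)$; as in the lemma preceding Theorem~\ref{th:FSM}, this identifies $J(G_1,\dots,G_5)$ with $\Phi_{9,A_2}$ up to a nonzero constant. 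Hence $J(G_1^2,\dots,G_5^2)=c\,\Phi_{9,A_2}\prod_{j=1}^5 G_j$ with $c\neq 0$.

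It remains to check that this Jacobian vanishes exactly along the mirrors of reflections in $\Orth_2(2U(3)\oplus A_2)$, each with multiplicity one, so that Theorem~\ref{th:converseJacobian} applies. Every component of $\div(\Phi_{9,A_2}\prod_j G_j)$ is such a mirror: the components of $\div\Phi_{9,A_2}$ are $2$-reflective mirrors, whose reflections lie in $\Orth_1\subseteq\Orth_2$, and the reflections with mirrors in $\div\prod_j G_j$ generate $\Orth_2(2U(3)\oplus A_2)$ over $\Orth_1(2U(3)\oplus A_2)$ by definition. Conversely, by Theorem~\ref{th:Jacobian}(3) the mirror of every reflection in $\Orth_2(2U(3)\oplus A_2)$ lies in this divisor. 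For the multiplicity I would use that $\div\Phi_{9,A_2}$ is reduced (it is the reduced Jacobian divisor above), that each $\div G_j$ is reduced and, by the Borcherds product formula, equals the sum of $\cD_\lambda$ over primitive $\lambda\in M^\vee$ with $\lambda\equiv\pm v_j\bmod M$ and $\lambda^2=2/3$, and that these divisors are pairwise disjoint: a norm-$2$ vector of $M$ proportional to such a $\lambda$ would have norm $6$, so $\div\Phi_{9,A_2}$ and $\div G_j$ share no component; and $\div G_i,\div G_j$ share no component for $i\neq j$ because the $45$ singular-weight products have pairwise distinct principal parts, so $v_i\not\equiv\pm v_j\bmod M$. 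Then Theorem~\ref{th:converseJacobian} yields $M_*(\Orth_2(2U(3)\oplus A_2))=\CC[G_1^2,\dots,G_5^2]$, and $(\cD_4/\Orth_2(2U(3)\oplus A_2))^*=\Proj\CC[G_1^2,\dots,G_5^2]=\PP(2,2,2,2,2)\cong\PP^4(\CC)$.

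Given the machinery already in place, the argument is essentially formal; the one point requiring care — and the place where the combinatorics of $*$-sets genuinely enters — is showing that $\div(\Phi_{9,A_2}\prod_j G_j)$ is \emph{reduced}, i.e.\ that adjoining the five reflections $\sigma_{v_j}$ to $\Orth_1(2U(3)\oplus A_2)$ creates no coincidences among the new reflective mirrors and no collision with the $2$-reflective locus. This reduces to the distinctness of the $45$ principal parts recorded (and checked) in the appendix, after which the rest proceeds exactly as in Theorem~\ref{th:Perna}.
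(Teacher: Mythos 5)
Your proposal is correct and follows essentially the same route as the paper, which proves this theorem exactly as it proves Theorem~\ref{th:Perna}: the $*$-set condition makes the squares $G_j^2$ modular with trivial character on $\Orth_2(2U(3)\oplus A_2)$, the Jacobian of the squares equals $\Phi_{9,A_2}\prod_{j=1}^5 G_j$ up to a nonzero constant (via the Jacobian lemma for a $*$-set together with Theorems~\ref{th:FSM} and~\ref{th:freeJacobian}), and Theorem~\ref{th:converseJacobian} then gives the free generation. Your additional verification that the divisor $\div\bigl(\Phi_{9,A_2}\prod_j G_j\bigr)$ is reduced is a detail the paper leaves implicit, and it is carried out correctly.
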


In Remark \ref{rem:irreducible-Phi9} below, we will see that the squared Jacobian $\Phi_{9, A_2}^2$ is irreducible in $M_*(\Orth_1(2U(3) \oplus A_2))$. From Theorem \ref{th:freeJacobian} it follows that $\det$ is the only non-trivial character of $\Orth_1(2U(3)\oplus A_2)$. Let $\Orth_1'(2U(3)\oplus A_2)$ be the commutator subgroup of $\Orth_1(2U(3)\oplus A_2)$. Then
$$
M_*(\Orth_1'(2U(3)\oplus A_2))=\CC[\Phi_{9,A_2}, G_i, 1\leq i \leq 5].
$$

As in \S \ref{sec:A1} we can construct a tower of free algebras of modular forms. We fix a $*$-set $\{G_1,...,G_5\}$. Let $\Orth_{1,1}(2U(3)\oplus A_2)$, $\Orth_{1,12}(2U(3)\oplus A_2)$, $\Orth_{1,123}(2U(3)\oplus A_2)$ and $\Orth_{1,1234}(2U(3)\oplus A_2)$ be the subgroups generated by reflections associated to the divisors of $\Phi_{9,A_2}G_1$, $\Phi_{9,A_2}\prod_{j=1}^2G_j$, $\Phi_{9,A_2}\prod_{j=1}^3G_j$ and $\Phi_{9,A_2}\prod_{j=1}^4G_j$ respectively. It is easy to derive the following structure results:
\begin{align*}
M_*(\Orth_{1,1}(2U(3)\oplus A_2))=\CC[G_1^2,G_2,G_3,G_4,G_5],\\
M_*(\Orth_{1,12}(2U(3)\oplus A_2))=\CC[G_1^2,G_2^2,G_3,G_4,G_5],\\
M_*(\Orth_{1,123}(2U(3)\oplus A_2))=\CC[G_1^2,G_2^2,G_3^2,G_4,G_5],\\
M_*(\Orth_{1,1234}(2U(3)\oplus A_2))=\CC[G_1^2,G_2^2,G_3^2,G_4^2,G_5].
\end{align*}
From the last of these we obtain another realization of $\PP^4(\CC)$ as an orthogonal modular variety:
\begin{theorem}\label{th:projective8}
$$
(\cD_{4}/\Orth_{1,1234}(2U(3)\oplus A_2))^*\cong \PP(1,2,2,2,2) \cong \PP^4(\CC).
$$
\end{theorem}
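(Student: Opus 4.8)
The theorem asserts that $(\cD_4/\Orth_{1,1234}(2U(3)\oplus A_2))^*$ is $\PP(1,2,2,2,2)$, and that this weighted projective space is $\PP^4(\CC)$. The plan is to first establish the free generation $M_*(\Orth_{1,1234}(2U(3)\oplus A_2))=\CC[G_1^2,G_2^2,G_3^2,G_4^2,G_5]$ (the last of the displayed identities above) by appealing to Theorem \ref{th:converseJacobian}, and then to run a short Veronese argument.

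For the free generation, first I would check that $G_1^2,\dots,G_4^2,G_5$ have trivial character on $\Orth_{1,1234}(2U(3)\oplus A_2)$: each $G_j$ vanishes to order one on its mirrors and hence carries at most a quadratic character, so the four squares are invariant; and $G_5$ does not vanish identically on the mirror of any of the reflections generating $\Orth_{1,1234}(2U(3)\oplus A_2)$ (these mirrors form $\div(\Phi_{9,A_2}G_1G_2G_3G_4)$, which shares no component with $\div G_5$ by the $*$-set hypothesis and linear independence), so restricting the transformation law to such a mirror forces the character value $1$. Next, column-multilinearity of the modular Jacobian gives
\[
J(G_1^2,G_2^2,G_3^2,G_4^2,G_5)=16\,G_1G_2G_3G_4\cdot J(G_1,G_2,G_3,G_4,G_5),
\]
and $J(G_1,\dots,G_5)$ is a nonzero constant times $\Phi_{9,A_2}$: it is nonzero by Theorem \ref{th:FSM}, it is a weight $9$ modular form on $\Orth_1(2U(3)\oplus A_2)$ vanishing on all mirrors of reflections there by Theorem \ref{th:Jacobian}, and $\Phi_{9,A_2}$ has simple zeros along exactly those mirrors. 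Hence $J(G_1^2,\dots,G_4^2,G_5)$ is a nonzero multiple of $G_1G_2G_3G_4\Phi_{9,A_2}$, a holomorphic form of weight $4+2\cdot 4+1=13=1+1+1+1+9$ whose divisor is the sum, with multiplicity one, of the mirrors of the reflections generating $\Orth_{1,1234}(2U(3)\oplus A_2)$ --- using that the divisors of $G_1,\dots,G_4,\Phi_{9,A_2}$ are pairwise distinct and simple. Since the Jacobian also vanishes on every reflection mirror of $\Orth_{1,1234}(2U(3)\oplus A_2)$ by Theorem \ref{th:Jacobian} and the weight count leaves no room for further zeros, $\div J$ is exactly the sum of those mirrors with multiplicity one, and Theorem \ref{th:converseJacobian} applies.

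Granting the free generation, \cite{BB66} identifies $(\cD_4/\Orth_{1,1234}(2U(3)\oplus A_2))^*$ with $\Proj\CC[G_1^2,G_2^2,G_3^2,G_4^2,G_5]=\PP(2,2,2,2,1)=\PP(1,2,2,2,2)$. To see this is $\PP^4(\CC)$, I would pass to the second Veronese subalgebra: with $\deg G_j^2=2$ and $\deg G_5=1$, a monomial $G_1^{2a_1}\cdots G_4^{2a_4}G_5^{b}$ has even degree precisely when $b$ is even, so the degree-even part is $\CC[G_1^2,G_2^2,G_3^2,G_4^2,G_5^2]$, a polynomial ring in five generators all of weight $2$; therefore $\Proj$ of the original algebra equals $\Proj$ of this Veronese subalgebra, which is the ordinary $\PP^4(\CC)$. (Equivalently, this is the standard reduction $\PP(1,d,\dots,d)\cong\PP^{n}$ of weighted projective spaces.)

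The only real obstacle is the middle paragraph, specifically the claim that $\div(G_1G_2G_3G_4\Phi_{9,A_2})$ is precisely the union of the mirrors of reflections in $\Orth_{1,1234}(2U(3)\oplus A_2)$, each with multiplicity one. This rests on the non-vanishing of the Jacobian (a finite Fourier-coefficient check) and on the singular-weight products together with $\Phi_{9,A_2}$ having pairwise distinct, simple, irreducible zero divisors. Once the structure theorem is in hand, the identification with $\PP^4(\CC)$ is completely elementary.
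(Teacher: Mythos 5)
Your proposal is correct and follows essentially the same route as the paper: the paper derives $M_*(\Orth_{1,1234}(2U(3)\oplus A_2))=\CC[G_1^2,G_2^2,G_3^2,G_4^2,G_5]$ by exactly this Jacobian argument (the Jacobian of the squared generators being $\Phi_{9,A_2}\prod_{j=1}^4 G_j$ up to a nonzero constant, followed by Theorem \ref{th:converseJacobian}), and then reads off the weighted projective space from $\Proj$. Your explicit multilinearity identity and the Veronese reduction $\PP(1,2,2,2,2)\cong\PP^4(\CC)$ are just spelled-out versions of steps the paper leaves implicit.
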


\begin{remark}\label{rem:irreducible-Phi9}
The space of invariants of the Weil representation attached to $2U(3) \oplus A_2$ is ten-dimensional, and these are mapped to a five-dimensional space of modular forms of weight one with trivial character on the full discriminant kernel $\widetilde{\Orth}^+(2U(3)\oplus A_2)$ under the additive theta lift. In particular the 45 singular weight products are all additive lifts. Using the argument in Remark \ref{rem:irreducible-Phi4} we conclude that $M_*(\widetilde{\Orth}^+(2U(3)\oplus A_2))$ is freely generated by the five additive lifts and that
$$
\widetilde{\Orth}^+(2U(3)\oplus A_2)=\Orth_1(2U(3)\oplus A_2),
$$
and the Eichler criterion implies that $\Phi_{9,A_2}$ is irreducible in $M_*(\Orth_1(2U(3)\oplus A_2))$.
\end{remark}

\section{The \texorpdfstring{$2U(2)\oplus A_1$}{} lattice}
In this section we will find three interesting free algebras of modular forms associated to the lattice $2U(2)\oplus A_1$. We first describe some Borcherds products on $2U(2)\oplus A_1$ of small weight.
\vspace{2mm}
\begin{enumerate}
\item There are nine holomorphic Borcherds products of weight $1$ and one holomorphic product of weight $2$ on $2U(2)\oplus A_1$. All have principal parts of the form
$$
q^{-1/4}e_v, \quad (v,v)=1/2, \; \ord(v)=2.
$$
The product of these ten forms, which we denote $\Phi_{11,A_1(2)}$, is a 2-reflective modular form of weight $11$ on $2U(2)\oplus A_1$. This can also be viewed as a modular form on $2U\oplus A_1(2)$ by \citep[Lemma 6.1]{GN18} because
$$
\Orth^+(2U(2)\oplus A_1)\cong \Orth^+(2U\oplus A_1(2)).
$$
\item There are six holomorphic Borcherds products $f_1,...,f_6$ of weight $2$ with principal parts
$$
q^{-1/2}e_v, \quad (v,v)=1, \; \ord(v)=2.
$$
\end{enumerate}
\vspace{2mm}

We fix the Gram matrix $\begin{psmallmatrix} 0 & 0 & 0 & 0 & 2 \\ 
0 & 0 & 0 & 2 & 0 \\ 
0 & 0 & 2 & 0 & 0 \\ 
0 & 2 & 0 & 0 & 0 \\ 
2 & 0 & 0 & 0 & 0 \end{psmallmatrix}$ and label the six products $f_1,...,f_6$ above such that their principal parts are as follows:
\begin{align*} 
f_1 : \; &v = (0, 1/2, 0, 1/2, 0);
&f_2 : \; &v = (0, 1/2, 0, 1/2, 1/2); \\
f_3 : \; &v = (1/2, 0, 0, 1/2, 1/2);
&f_4 : \; &v = (1/2, 0, 0, 0, 1/2); \\
f_5 : \; &v = (1/2, 1/2, 0, 0, 1/2);
&f_6 : \; &v = (1/2, 1/2, 0, 1/2, 0).
\end{align*}

Let $\Orth_1(2U(2)\oplus A_1)$ be the subgroup of $\Orth^+(2U(2)\oplus A_1)$ generated by all reflections associated to the divisor of $\Phi_{11,A_1(2)}$. 

\begin{lemma}
\noindent
\begin{enumerate}
\item The six products $f_i$ are modular forms with trivial character on $\Orth_1(2U(2)\oplus A_1)$.
\item The Jacobian $J(f_1,f_2,f_3,f_4)$ equals $\Phi_{11,A_1(2)}$ up to a non-zero multiple.
\end{enumerate}
\end{lemma}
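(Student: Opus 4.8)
The plan is to mimic the proof of Lemma~\ref{lem:2U4A1jacobian} verbatim, treating the two assertions in turn. For part~(1), I would first recall that each $f_i$ is a Borcherds product, hence a holomorphic modular form on the discriminant kernel $\widetilde{\Orth}^+(2U(2)\oplus A_1)$ with some character, and that $\Orth_1(2U(2)\oplus A_1)$ is (as usual, cf.\ the analogous remarks for $2U(4)\oplus A_1$) contained in $\widetilde{\Orth}^+(2U(2)\oplus A_1)$; thus $f_i$ restricts to a modular form on $\Orth_1(2U(2)\oplus A_1)$ with a character $\chi_i$. The substance is that $\chi_i$ is trivial on the generating reflections. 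Reading off Borcherds' product formula from the input (principal part $q^{-1/2}e_v$, $(v,v)=1$, $\ord(v)=2$), the divisor of $f_i$ is the reduced sum of the rational quadratic divisors $\cD_u$ over $u\in\pm v+(2U(2)\oplus A_1)$ with $(u,u)=1$; in particular $f_i$ does not vanish on any mirror $\cD_r$ with $(r,r)=1/2$, since a hyperplane of this type cannot coincide with one of norm $1$. Since $\Orth_1(2U(2)\oplus A_1)$ is generated by reflections $\sigma_r$ with $(r,r)=1/2$, $\ord(r)=2$, each fixing its mirror pointwise on the affine cone and satisfying $\sigma_r^2=\mathrm{id}$, we get $\chi_i(\sigma_r)^2=1$; and $\chi_i(\sigma_r)=-1$ would force $f_i$ to vanish on the cone over $\cD_r$, a contradiction. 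Hence $\chi_i$ is trivial. This is exactly the argument of Lemma~\ref{lem:2U4A1products2}(1).

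For part~(2), part~(1) shows $f_1,\dots,f_4$ have weight $2$ and trivial character on $\Orth_1(2U(2)\oplus A_1)$, so Theorem~\ref{th:Jacobian}(1) gives that $J=J(f_1,f_2,f_3,f_4)$ is a cusp form of weight $3+4\cdot2=11$ with character $\det$, and Theorem~\ref{th:Jacobian}(3) gives that $J$ vanishes on the mirror $\cD_r$ of every reflection $\sigma_r\in\Orth_1(2U(2)\oplus A_1)$, in particular on every component of $\div(\Phi_{11,A_1(2)})$. Since $\Phi_{11,A_1(2)}$ has weight $11$ and divisor the reduced sum of precisely these mirrors, the quotient $J/\Phi_{11,A_1(2)}$ extends holomorphically across $\div(\Phi_{11,A_1(2)})$ to a holomorphic modular form of weight $0$ on $\Orth_1(2U(2)\oplus A_1)$, hence to a constant; and this constant is nonzero exactly when $f_1,f_2,f_3,f_4$ are algebraically independent over $\CC$ by Theorem~\ref{th:Jacobian}(2).

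The only computational obstacle, and the one place where I expect to have to do real work, is this last nonvanishing. I would settle it as elsewhere in the paper: compute the Fourier expansions of $f_1,\dots,f_4$ at a $0$-dimensional cusp from the Borcherds product formula to a small precision, form the $4\times4$ determinant $J$ of Theorem~\ref{th:Jacobian}, and exhibit a nonzero coefficient -- in practice by checking directly that this expansion equals a scalar multiple of that of $\Phi_{11,A_1(2)}$, which simultaneously pins down the constant. No new idea beyond the bookkeeping of these coefficients (already done in SAGE for the analogous lattices) should be needed. As a byproduct, the identity $J=c\,\Phi_{11,A_1(2)}$ with $c\neq0$ then shows that $\Orth_1(2U(2)\oplus A_1)$ contains no reflection whose mirror lies outside $\div(\Phi_{11,A_1(2)})$, which is exactly what makes Theorem~\ref{th:converseJacobian} (and its half-integral version) applicable to this group and its subgroups in the rest of the section.
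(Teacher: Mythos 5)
Your proof is correct and follows the paper's own argument: part (1) is the divisor/character argument the paper invokes tersely (``this can be seen from the divisor of the $f_i$''), and part (2) is exactly the paper's reasoning — $J$ has weight $11$ and vanishes on all mirrors of reflections in $\Orth_1(2U(2)\oplus A_1)$, so $J/\Phi_{11,A_1(2)}$ is a weight-zero holomorphic form, hence a constant, whose nonvanishing is verified from the Fourier expansions (the paper finds $J = 768\,\Phi_{11,A_1(2)}$ to precision $O(q,s)^{10}$).
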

\begin{proof} (1) This can be seen from the divisor of the $f_i$. \\
(2) We checked by computer that $J = J(f_1,...,f_4)$ is nonzero (and indeed equals $768\Phi_{11,A_1(2)}$ up to precision $O(q, s)^{10}$). Applying Theorem \ref{th:Jacobian} (4) as in the earlier sections shows that $J / \Phi_{11, A_1(2)}$ is holomorphic of weight zero and therefore a constant.
\end{proof}

Therefore we can apply Theorem \ref{th:converseJacobian} to this situation. This yields another realization of $\PP^3(\CC)$ as a modular variety.
\begin{theorem}\label{th:projective9} The six products $f_1,...,f_6$ span a four-dimensional space and satisfy the relations $$f_1 + f_3 + f_5 = f_2 + f_4 + f_6 = 0.$$ Any four that are linearly independent are algebraically independent and generate the ring of modular forms for $\Orth_1(2U(2) \oplus A_1)$:
\begin{align*}
M_*(\Orth_1(2U(2)\oplus A_1)) &=\CC[f_1,f_2,f_3,f_4],\\
(\cD_{3}/ \Orth_1(2U(2)\oplus A_1))^* &\cong \PP^3(\CC).
\end{align*}
\end{theorem}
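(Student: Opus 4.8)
The plan is to derive the whole statement from Theorem~\ref{th:converseJacobian}, applied to the group $\Gamma=\Orth_1(2U(2)\oplus A_1)$ and the four weight-$2$ forms $f_1,f_2,f_3,f_4$, taking the preceding lemma as the only non-formal ingredient. First I would record the linear algebra of the six products. The two relations $f_1+f_3+f_5=f_2+f_4+f_6=0$ are read off by comparing finitely many Fourier coefficients, exactly as in Corollary~\ref{cor:Matsumoto}; they express $f_5,f_6$ as combinations of $f_1,\dots,f_4$, so the $\CC$-span of $f_1,\dots,f_6$ has dimension at most $4$. On the other hand, by part~(2) of the lemma the Jacobian $J(f_1,f_2,f_3,f_4)$ is not identically zero, so by Theorem~\ref{th:Jacobian}(2) the forms $f_1,\dots,f_4$ are algebraically, hence linearly, independent; therefore their span is exactly $4$-dimensional.

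For the main step, recall from part~(1) of the lemma that each $f_i$ is a modular form of weight $2$ with trivial character on $\Orth_1(2U(2)\oplus A_1)$, and from part~(2) that $J(f_1,f_2,f_3,f_4)=c\,\Phi_{11,A_1(2)}$ for some $c\in\CC^*$. By construction $\Phi_{11,A_1(2)}$ is the product of the ten Borcherds products with principal parts $q^{-1/4}e_v$ ($(v,v)=1/2$, $\ord(v)=2$), so $\operatorname{div}(\Phi_{11,A_1(2)})$ is precisely the sum, with multiplicity one, of the mirrors $\cD_v$ of the reflections $\sigma_v$ with $v$ primitive, $(v,v)=1/2$, $\ord(v)=2$, which by definition generate $\Orth_1(2U(2)\oplus A_1)$. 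Conversely, if $\sigma_r$ is any reflection lying in $\Orth_1(2U(2)\oplus A_1)$ then Theorem~\ref{th:Jacobian}(3) forces $J(f_1,\dots,f_4)$ to vanish on $\cD_r$, hence $\cD_r\subseteq\operatorname{div}(\Phi_{11,A_1(2)})$ and $\sigma_r$ is one of the $\sigma_v$. Thus $J(f_1,\dots,f_4)$ vanishes exactly on the mirrors of reflections in $\Orth_1(2U(2)\oplus A_1)$, each with multiplicity one, and Theorem~\ref{th:converseJacobian} applies: $M_*(\Orth_1(2U(2)\oplus A_1))=\CC[f_1,f_2,f_3,f_4]$ is freely generated, and $\Orth_1(2U(2)\oplus A_1)$ is generated by the $\sigma_v$, consistently with its definition.

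Finally, let $g_1,\dots,g_4$ be any four linearly independent elements among $f_1,\dots,f_6$. Since the $f_i$ span a $4$-dimensional space, the $g_i$ form a basis of that space, so $\CC[g_1,\dots,g_4]$ contains $f_1,\dots,f_4$ and hence equals $\CC[f_1,\dots,f_4]=M_*(\Orth_1(2U(2)\oplus A_1))$; being four generators of a domain of Krull dimension $4$, they are algebraically independent. (Alternatively, since all $f_i$ have the same weight, $J(g_1,\dots,g_4)=\det(A)\,J(f_1,\dots,f_4)\neq 0$ for the change-of-basis matrix $A$, and one reapplies Theorem~\ref{th:converseJacobian}.) The isomorphism of the Baily--Borel compactification with $\PP^3(\CC)$ then follows from $(\cD_3/\Gamma)^*=\Proj M_*(\Gamma)=\Proj\CC[f_1,f_2,f_3,f_4]\cong\PP(2,2,2,2)\cong\PP^3(\CC)$, using \cite{BB66}. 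The substance of the argument is entirely contained in the lemma preceding the theorem — identifying the ten Borcherds products and checking by computer that the relevant Jacobian is nonzero; given that, the theorem is a formal consequence of Theorem~\ref{th:converseJacobian}, the only delicate point being the observation, just noted, that $\Orth_1(2U(2)\oplus A_1)$ acquires no reflections beyond those visible in $\operatorname{div}(\Phi_{11,A_1(2)})$.
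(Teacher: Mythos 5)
Your proposal is correct and follows exactly the paper's route: the paper likewise derives everything from Theorem \ref{th:converseJacobian} together with the preceding lemma (trivial character of the $f_i$ and $J(f_1,\dots,f_4)=c\,\Phi_{11,A_1(2)}$), and determines the two linear relations by comparing Fourier expansions. Your additional remarks — that every mirror of a reflection in $\Orth_1(2U(2)\oplus A_1)$ already lies in $\operatorname{div}(\Phi_{11,A_1(2)})$ by Theorem \ref{th:Jacobian}(3), and that any linearly independent quadruple works by a change of basis — are just explicit spellings-out of steps the paper leaves implicit.
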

\begin{proof} Theorem \ref{th:converseJacobian} implies everything except the exact form of the relations among the $f_i$, which can be determined from their Fourier expansions.
\end{proof}

\begin{remark} The squares of the nine Borcherds products $b_i$ of weight one are modular forms without character for $\Orth_1(2U(2) \oplus A_1)$ and therefore lie in the span of $f_1,...,f_4$. Indeed they also span this space and they satisfy five three-term linear relations of the form $b_i^2 + b_j^2 = b_k^2$ as one can check from their Fourier expansions.
\end{remark}

Choose any linearly independent squares of weight one Borcherds products $b_1^2$, $b_2^2$, $b_3^2$ and $b_4^2$ as in the remark. Let $\Orth_{1'}(2U(2)\oplus A_1)$ be the subgroup of $\Orth_1(2U(2)\oplus A_1)$ generated by all reflections associated to the divisor of $\Phi_{11,A_1(2)}/ (\prod_{j=1}^4b_j)$. Similarly to the case of $2U(2)\oplus 2A_1$,  the four forms $b_j$ are modular with trivial character on $\Orth_{1'}(2U(2)\oplus A_1)$ and their Jacobian equals $\Phi_{11,A_1(2)}/ (\prod_{j=1}^4b_j)$ up to a nonzero multiple. From this we obtain another realization of $\PP^3(\CC)$ as a modular variety:
\begin{theorem}\label{th:projective10}
\begin{align*}
M_*(\Orth_{1'}(2U(2)\oplus A_1)) &=\CC[b_1,b_2,b_3,b_4],\\
(\cD_{3}/ \Orth_{1'}(2U(2)\oplus A_1))^* &\cong \PP^3(\CC).
\end{align*}
\end{theorem}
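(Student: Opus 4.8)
The plan is to transcribe the argument that proved Theorem \ref{th:projective4} for $2U(2)\oplus 2A_1$, now with the four weight-one Borcherds products $b_1,\dots,b_4$ playing the role of $F_1,\dots,F_5$ there. First I would record that each $b_j$ is a modular form on $\Orth_1(2U(2)\oplus A_1)$ carrying a character valued in $\{\pm1\}$: the preceding remark says that $b_j^2$ has trivial character on $\Orth_1(2U(2)\oplus A_1)$, and since $\cD_3$ is connected the identity $\gamma(b_j)^2=b_j^2$ forces $\gamma(b_j)=\pm b_j$ for every $\gamma\in\Orth_1(2U(2)\oplus A_1)$. Next I would show that the $b_j$ have \emph{trivial} character on the smaller group $\Orth_{1'}(2U(2)\oplus A_1)$. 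By definition that group is generated by the reflections $\sigma_{v_i}$ whose mirrors $\cD_{v_i}$ occur in $\div(\Phi_{11,A_1(2)}/\prod_{j=1}^4 b_j)$, that is, by the reflections attached to the six weight-one-or-two products other than $b_1,\dots,b_4$. Since the ten Borcherds products are pairwise non-proportional, their divisors are ten distinct hyperplanes, so for $j\le4$ the form $b_j$, whose divisor is the single hyperplane $\cD_{v_j}$, does not vanish on the mirror of any of those six reflections. A modular form that transforms by a sign under an order-two reflection $\sigma$ but does not vanish on the mirror of $\sigma$ must be fixed by $\sigma$; hence the $\{\pm1\}$-character of $b_j$ is trivial on each generator of $\Orth_{1'}(2U(2)\oplus A_1)$, and therefore on the whole group. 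Thus $b_1,\dots,b_4\in M_1(\Orth_{1'}(2U(2)\oplus A_1))$ with trivial character.

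The next step is the Jacobian identity. A short Fourier expansion computation (as in Lemma \ref{lem:2A1}) shows $J:=J(b_1,\dots,b_4)\not\equiv 0$; alternatively, $b_1^2,\dots,b_4^2$ are four linearly independent weight-two forms on $\Orth_1(2U(2)\oplus A_1)$, hence --- up to an invertible linear change of the weight-two generators $f_1,\dots,f_4$ of Theorem \ref{th:projective9} --- a free generating set of $M_*(\Orth_1(2U(2)\oplus A_1))$, so the $b_j$ are algebraically independent and $J\not\equiv0$ by Theorem \ref{th:Jacobian}(2). By Theorem \ref{th:Jacobian} the form $J$ is then a weight-$7$ cusp form with character $\det$ on $\Orth_{1'}(2U(2)\oplus A_1)$ and vanishes on $\cD_{v_i}$ for every reflection $\sigma_{v_i}$ in that group, in particular on all six mirrors appearing in $\div(\Phi_{11,A_1(2)}/\prod_{j=1}^4 b_j)$; comparing divisors, $J/(\Phi_{11,A_1(2)}/\prod_{j=1}^4 b_j)$ is a holomorphic modular form of weight $0$ and therefore a nonzero constant.

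It remains to invoke Theorem \ref{th:converseJacobian}. The divisor of $J$ equals the sum of the six distinct hyperplanes $\cD_{v_i}$, each with multiplicity one, and each $\cD_{v_i}$ is the mirror of the reflection $\sigma_{v_i}\in\Orth_{1'}(2U(2)\oplus A_1)$; conversely, any reflection $\sigma$ in $\Orth_{1'}(2U(2)\oplus A_1)$ has $J$ vanishing on its mirror $\cD_\sigma$ by Theorem \ref{th:Jacobian}(3), and as $\cD_\sigma$ is an irreducible hyperplane it must coincide with one of those six $\cD_{v_i}$. Hence $J$ vanishes exactly on the mirrors of the reflections in $\Orth_{1'}(2U(2)\oplus A_1)$, each with multiplicity one, and Theorem \ref{th:converseJacobian} yields $M_*(\Orth_{1'}(2U(2)\oplus A_1))=\CC[b_1,b_2,b_3,b_4]$ together with the fact that $\Orth_{1'}(2U(2)\oplus A_1)$ is generated by the reflections with mirrors in $\div(J)$, consistent with its definition. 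Taking $\Proj$ and using that all four generators have weight $1$ gives $(\cD_3/\Orth_{1'}(2U(2)\oplus A_1))^*\cong\PP(1,1,1,1)=\PP^3(\CC)$. I do not expect a genuine obstacle here --- the argument is a line-by-line parallel of the $2U(2)\oplus 2A_1$ treatment earlier in the paper --- and the only ingredient that is computational rather than formal is the non-vanishing of the Jacobian $J$.
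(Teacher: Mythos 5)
Your argument is correct and is essentially the paper's own proof: the paper likewise deduces that the $b_j$ have trivial character on $\Orth_{1'}(2U(2)\oplus A_1)$, identifies $J(b_1,\dots,b_4)$ with $\Phi_{11,A_1(2)}/\prod_{j=1}^4 b_j$ up to a nonzero constant via the weight-zero quotient, and applies Theorem \ref{th:converseJacobian}. The only (welcome) variation is your observation that $J\not\equiv 0$ already follows from Theorem \ref{th:projective9} together with the linear independence of $b_1^2,\dots,b_4^2$, so no separate Fourier computation of the Jacobian is required.
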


Furthermore, we define $\Orth_{1''}(2U(2)\oplus A_1)$ as the subgroup of $\Orth_1(2U(2)\oplus A_1)$ generated by all reflections associated to the divisor of $\Phi_{11,A_1(2)}/ b_1$. Then
\begin{theorem}\label{th:projective11}
\begin{align*}
M_*(\Orth_{1''}(2U(2)\oplus A_1)) &=\CC[b_1,b_2^2,b_3^2,b_4^2],\\
(\cD_{3}/ \Orth_{1''}(2U(2)\oplus A_1))^* &\cong \PP(1,2,2,2)\cong \PP^3(\CC).
\end{align*}
\end{theorem}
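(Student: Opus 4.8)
The plan is to apply the converse Jacobian criterion, Theorem~\ref{th:converseJacobian}, to $\Gamma := \Orth_{1''}(2U(2)\oplus A_1)$ with the candidate generators $b_1, b_2^2, b_3^2, b_4^2$. First I would verify that all four are modular forms with trivial character on $\Gamma$. For $b_2^2, b_3^2, b_4^2$ this is immediate: each $b_j$ has a quadratic character on $\Orth_1(2U(2)\oplus A_1)$ (by the remark following Theorem~\ref{th:projective9}, the squares of the weight-one Borcherds products are modular without character there), and $\Gamma\subseteq\Orth_1(2U(2)\oplus A_1)$. For $b_1$ itself: by definition $\Gamma$ is generated by the reflections $\sigma_r$ whose mirror is a component of $\mathrm{div}(\Phi_{11,A_1(2)}/b_1)$; the single reflective divisor of $b_1$ is distinct from all nine of those mirrors, so $b_1$ does not vanish on any such $\cD_r(2U(2)\oplus A_1)$. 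Evaluating $b_1(\sigma_r\mathcal Z)=\chi(\sigma_r)b_1(\mathcal Z)$ along the affine cone over $\cD_r(2U(2)\oplus A_1)$, where $\sigma_r$ acts as the identity, forces $\chi(\sigma_r)=1$; hence $b_1$ too has trivial character on $\Gamma$.

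Next I would identify the Jacobian $J := J(b_1, b_2^2, b_3^2, b_4^2)$. The key point is the behaviour of the modular Jacobian under taking powers: in the determinant defining $J(G_1^{m_1},\dots,G_{n+1}^{m_{n+1}})$, the scalar $m_i G_i^{m_i-1}$ divides every entry of the $i$-th column---the top-row entry $m_i l_i G_i^{m_i}$, where $l_i$ is the weight of $G_i$, as well as each derivative $m_i G_i^{m_i-1}\partial_{z_j}G_i$---and the resulting column is exactly the $i$-th column of $J(G_1,\dots,G_{n+1})$, so $J(G_1^{m_1},\dots,G_{n+1}^{m_{n+1}})=\bigl(\prod_i m_i G_i^{m_i-1}\bigr)\,J(G_1,\dots,G_{n+1})$. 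Applied with $(G_i,m_i)=(b_1,1),(b_2,2),(b_3,2),(b_4,2)$ this gives $J=8\,b_2b_3b_4\cdot J(b_1,b_2,b_3,b_4)$. By the discussion preceding Theorem~\ref{th:projective10}, $J(b_1,b_2,b_3,b_4)$ equals $\Phi_{11,A_1(2)}/(b_1b_2b_3b_4)$ up to a nonzero scalar, so $J$ equals $\Phi_{11,A_1(2)}/b_1$ up to a nonzero scalar (the weights agree: $3+1+2+2+2=10=11-1$). Now $\Phi_{11,A_1(2)}/b_1$ is the product of the nine remaining Borcherds products among the ten listed for $2U(2)\oplus A_1$; each of these has a single reflective mirror of multiplicity one, and the nine mirrors are pairwise distinct (they correspond to distinct classes in the discriminant form), so $\mathrm{div}(J)$ is precisely the union of the mirrors of the generating reflections of $\Gamma$, each with multiplicity one. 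Conversely, since $b_1,b_2^2,b_3^2,b_4^2$ are modular on $\Gamma$, Theorem~\ref{th:Jacobian}(4) forces the mirror of every reflection of $\Gamma$ to lie in $\mathrm{div}(J)$; hence $J$ vanishes exactly on the mirrors of reflections in $\Gamma$, each with multiplicity one.

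Theorem~\ref{th:converseJacobian} then yields $M_*(\Orth_{1''}(2U(2)\oplus A_1))=\CC[b_1, b_2^2, b_3^2, b_4^2]$ (and, as byproducts, that these four forms are algebraically independent and that $\Gamma$ is generated by the nine reflections above). For the geometric statement, the identification of the Satake--Baily--Borel compactification with $\Proj$ recalled in the introduction gives $(\cD_3/\Orth_{1''}(2U(2)\oplus A_1))^*=\Proj\CC[b_1,b_2^2,b_3^2,b_4^2]=\PP(1,2,2,2)$, and $\PP(1,2,2,2)\cong\PP^3(\CC)$ because the common factor $2$ of the last three weights is coprime to the remaining weight $1$ and may therefore be removed---the same reduction of weighted projective spaces already used in Theorems~\ref{th:projective5} and~\ref{th:projective8}.

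The only step I expect to require genuine care is the Jacobian identification: pinning down the scalar and, above all, confirming that the nine components of $\mathrm{div}(\Phi_{11,A_1(2)}/b_1)$ are exactly the mirrors of the reflections in $\Orth_{1''}(2U(2)\oplus A_1)$, each occurring once. As above this follows from the description of $\Phi_{11,A_1(2)}$ as a product of ten Borcherds products with pairwise distinct reflective divisors together with Theorem~\ref{th:Jacobian}(4), just as in the earlier sections; the remaining steps are routine.
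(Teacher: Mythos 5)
Your proposal is correct and follows exactly the route the paper intends (and leaves implicit) for this theorem: the power rule for the modular Jacobian reduces $J(b_1,b_2^2,b_3^2,b_4^2)$ to $b_2b_3b_4\cdot J(b_1,\dots,b_4)=c\,\Phi_{11,A_1(2)}/b_1$, and Theorem~\ref{th:converseJacobian} together with Theorem~\ref{th:Jacobian}(3) then gives both claims, just as in Theorems~\ref{th:projective10} and~\ref{th:projective5}. No gaps.
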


\begin{remark}
The space of modular forms of weight $3/2$ for the Weil representation attached to $2U(2)\oplus A_1$ is five-dimensional, and these forms map to a four-dimensional space of modular forms of weight $2$ under the additive theta lift. In particular, every modular form in $M_2(\Orth_1(2U(2)\oplus A_1))$ is an additive lift and therefore has trivial character on the full discriminant kernel. Similarly to Remark \ref{rem:irreducible-Phi4}, we find that $M_*(\widetilde{\Orth}^+(2U(2)\oplus A_1))$ is freely generated by the four additive lifts and that
$$
\widetilde{\Orth}^+(2U(2)\oplus A_1)=\Orth_1(2U(2)\oplus A_1).
$$
\end{remark}

Similarly to the previous sections, the decomposition of the Jacobian $\Phi_{11, A_1(2)}$ determines the structure of the algebra of modular forms for the commutator group $\Orth_1'(2U(2) \oplus A_1)$ and some related groups. We omit the details.

\section{The \texorpdfstring{$2U(3)\oplus A_1$}{} lattice}
In this section we determine an interesting free algebra of modular forms on $2U(3)\oplus A_1$. We first work out Borcherds products on $2U(3)\oplus A_1$:
\vspace{2mm}
\begin{enumerate}
\item There are 29 holomorphic Borcherds products of weight $1$ on $2U(3)\oplus A_1$. Sixteen of them have principal parts of the form
$$
q^{-1/4}(e_v+e_{-v}), \quad (v,v)=1/2, \; \ord(v)=6,
$$
and they are not reflective modular forms. Twelve of them have principal parts
$$
q^{-1/3}(e_v+e_{-v}), \quad (v,v)=2/3, \; \ord(v)=3,
$$
and they are reflective modular forms. The last one is a 2-reflective modular form denoted by $\Delta_1$ with principal part
$$
q^{-1/4}e_{(0, 0, 1/2, 0, 0)}
$$
with respect to the Gram matrix
$$
\begin{psmallmatrix}
0 & 0 & 0 & 0 & 3 \\ 
0 & 0 & 0 & 3 & 0 \\ 
0 & 0 & 2 & 0 & 0 \\ 
0 & 3 & 0 & 0 & 0 \\ 
3 & 0 & 0 & 0 & 0
\end{psmallmatrix}.
$$
\item There is a holomorphic Borcherds product of weight $7$ with principal part $q^{-1}e_0$. We label this 2-reflective form $\Phi_{7,A_1(3)}$.
\end{enumerate}
\vspace{2mm}

Let $\Orth_1(2U(3)\oplus A_1)$ be the subgroup of $\Orth^+(2U(3)\oplus A_1)$ generated by all reflections associated to the divisor of $\Phi_{7,A_1(3)}$. The 28 products of weight $1$ other than $\Delta_1$ are modular forms of trivial character on $\Orth_1(2U(3)\oplus A_1)$. 

\begin{lemma}
There are four products $g_1,...,g_4$ of weight $1$ whose Jacobian $J(g_1,g_2,g_3,g_4)$ equals $\Phi_{7,A_1(3)}$ up to a non-zero multiple.
\end{lemma}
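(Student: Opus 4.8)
The plan is to follow the proof of Lemma~\ref{lem:2U4A1jacobian} and its analogues in \S\ref{sec:A1} and the subsequent sections. First I would fix an explicit quadruple $g_1,g_2,g_3,g_4$ among the $28$ weight-one Borcherds products other than $\Delta_1$, and compute the leading Fourier coefficients of the $g_i$ on the tube domain at a zero-dimensional cusp from the input data of their Borcherds lifts (computed in SAGE via the algorithm of \cite{Wi18}). From these coefficients one checks that the Jacobian $J:=J(g_1,g_2,g_3,g_4)$ is not identically zero --- equivalently, by Theorem~\ref{th:Jacobian}(2), that $g_1,\dots,g_4$ are algebraically independent over $\CC$ --- and that the first several Fourier coefficients of $J$ agree with those of $\Phi_{7,A_1(3)}$ up to a common nonzero scalar. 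Since there are only finitely many quadruples, a suitable choice is found quickly.

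The rest is the divisor argument used throughout the paper. The $g_i$ are modular forms of weight $1$ with trivial character on $\Orth_1(2U(3)\oplus A_1)$, so by Theorem~\ref{th:Jacobian}(1) the Jacobian $J$ is a cusp form of weight $n+\sum_{i=1}^{4}k_i = 3+4 = 7$ on $\Orth_1(2U(3)\oplus A_1)$ with the determinant character, matching the weight of $\Phi_{7,A_1(3)}$. By Theorem~\ref{th:Jacobian}(3), $J$ vanishes on the mirror $\cD_r(2U(3)\oplus A_1)$ of every reflection $\sigma_r\in\Orth_1(2U(3)\oplus A_1)$; in particular $J$ vanishes to order at least one along every irreducible component of $\mathrm{div}\,\Phi_{7,A_1(3)}$, because $\Orth_1(2U(3)\oplus A_1)$ is by definition generated by the reflections whose mirrors lie in that divisor. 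Since $\Phi_{7,A_1(3)}$ is holomorphic of weight $7$ and its divisor is the reduced sum of exactly these mirrors, the quotient $J/\Phi_{7,A_1(3)}$ is a holomorphic modular form of weight $0$ on $\Orth_1(2U(3)\oplus A_1)$, hence a constant, and this constant is nonzero because $J\not\equiv 0$. This proves the lemma.

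The only genuine obstacle is computational: one must produce enough Fourier coefficients of the relevant vector-valued input forms to make the non-vanishing of $J$, and its numerical identification with $\Phi_{7,A_1(3)}$, rigorous. Once the lemma holds, Theorem~\ref{th:converseJacobian} applies verbatim, exactly as in Theorems~\ref{th:Runge} and \ref{th:FSM}: the algebra $M_*(\Orth_1(2U(3)\oplus A_1))$ is freely generated by $g_1,\dots,g_4$, so that $(\cD_3/\Orth_1(2U(3)\oplus A_1))^*\cong\PP^3(\CC)$, and $\Orth_1(2U(3)\oplus A_1)$ is generated by the reflections with mirrors in $\mathrm{div}\,\Phi_{7,A_1(3)}$. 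As in Theorem~\ref{th:projective9}, the linear relations among the remaining weight-one products can then be read off from their Fourier expansions, since those with trivial character on $\Orth_1(2U(3)\oplus A_1)$ lie in the span of $g_1,\dots,g_4$.
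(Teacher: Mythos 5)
Your proposal is correct and follows essentially the same route as the paper: pick an explicit quadruple of weight-one products (the paper uses four of the reflective ones with $\ord(v)=3$, noting afterwards that any four linearly independent ones excluding $\Delta_1$ work), verify numerically via Fourier expansions that $J\neq 0$, and then conclude by the standard divisor argument that $J/\Phi_{7,A_1(3)}$ is a holomorphic weight-zero form, hence a nonzero constant.
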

\begin{proof} We used the products $g_i$ whose inputs have principal parts $q^{-1/3} (e_{v_i} + e_{-v_i})$, where \begin{align*} &v_1=(1/3, 2/3, 0, 2/3, 0), \quad
& v_2&=(1/3, 2/3, 0, 1/3, 2/3), \\
&v_3=(1/3, 1/3, 0, 2/3, 2/3), \quad  &v_4&=(1/3, 1/3, 0, 1/3, 0), \end{align*} (although it will turn out that any four linearly independent forms not including $\Delta_1$ will do) and using Fourier series computed that their Jacobian $J$ is nonzero (and equals $(12 \zeta_3 + 6) \Phi_{7, A_1(3)}$ up to precision $O(q,s)^{10}$). As in the previous sections $J / \Phi_{7, A_1(3)}$ is holomorphic of weight zero and therefore constant.
\end{proof}

By applying Theorem \ref{th:converseJacobian} we obtain the following theorem.
\begin{theorem}\label{th:projective13} Every linearly independent set $g_1,...,g_4$ of weight one products that does not include $\Delta_1$ is algebraically independent and generates the ring of modular forms for $\Orth_1(2U(3) \oplus A_1)$:
\begin{align*}
M_*(\Orth_1(2U(3)\oplus A_1)) &=\CC[g_1,g_2,g_3,g_4],\\
(\cD_{3}/ \Orth_1(2U(3)\oplus A_1))^* &\cong \PP^3(\CC).
\end{align*}
\end{theorem}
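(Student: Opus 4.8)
The plan is to apply Theorem \ref{th:converseJacobian} to the group $\Gamma = \Orth_1(2U(3)\oplus A_1)$ with the four weight-one forms $g_1,\dots,g_4$ supplied by the preceding lemma. First I would collect what has already been established: each $g_i$ is a holomorphic modular form of weight $1$ with trivial character on $\Orth_1(2U(3)\oplus A_1)$ (all $28$ weight-one products other than $\Delta_1$ have trivial character, as noted above), and by the lemma their Jacobian satisfies $J(g_1,\dots,g_4) = c\,\Phi_{7,A_1(3)}$ for some $c \in \CC^\ast$.

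Next I would identify the divisor of $\Phi_{7,A_1(3)}$ with the set of reflection mirrors of $\Gamma$. By construction $\Phi_{7,A_1(3)}$ is a $2$-reflective Borcherds product, so its divisor is a sum of rational quadratic divisors $\cD_r(M)$ for primitive $r \in M$ with $(r,r) = 2$, each occurring with multiplicity exactly one; and $\Gamma$ is by definition generated by the reflections $\sigma_r$ attached to these divisors, so every component of $\mathrm{div}(\Phi_{7,A_1(3)})$ is the mirror of a reflection in $\Gamma$. Conversely, if some reflection $\sigma_r \in \Gamma$ had mirror $\cD_r(M)$ not contained in $\mathrm{div}(\Phi_{7,A_1(3)})$, then Theorem \ref{th:Jacobian}(3) would force $J = c\,\Phi_{7,A_1(3)}$ to vanish along $\cD_r(M)$ as well, a contradiction. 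Hence $\mathrm{div}(J)$ is precisely the sum of the mirrors of the reflections in $\Gamma$, each with multiplicity one, which is exactly the hypothesis of Theorem \ref{th:converseJacobian}.

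Applying Theorem \ref{th:converseJacobian} then gives both $M_*(\Orth_1(2U(3)\oplus A_1)) = \CC[g_1,g_2,g_3,g_4]$ and the fact that $\Orth_1(2U(3)\oplus A_1)$ is generated by the reflections whose mirrors lie in $\mathrm{div}(\Phi_{7,A_1(3)})$. Since all four generators have weight $1$, the Satake--Baily--Borel compactification is $\Proj\CC[g_1,\dots,g_4] = \PP(1,1,1,1) \cong \PP^3(\CC)$. For the statement about an arbitrary quadruple: the identity $M_*(\Gamma)=\CC[g_1,\dots,g_4]$ shows $\dim_\CC M_1(\Orth_1(2U(3)\oplus A_1)) = 4$, so the $28$ weight-one products different from $\Delta_1$ span a space of dimension at most $4$, hence exactly $4$; any four of them that are linearly independent therefore form a basis of $M_1$, and a polynomial ring on four degree-one generators is freely generated by any basis of its degree-one part, so such a quadruple is automatically algebraically independent and generates $M_*(\Gamma)$.

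I expect no serious obstacle: the genuine work — checking that the Jacobian of a concrete quadruple of Borcherds products is nonzero and divides $\Phi_{7,A_1(3)}$ — has already been done in the preceding lemma through a finite Fourier-expansion computation. The one point that needs a moment's care is the matching of $\mathrm{div}(J)$ with the complete set of reflection mirrors in $\Gamma$, i.e.\ ruling out ``hidden'' reflections of $\Gamma$ lying outside $\mathrm{div}(\Phi_{7,A_1(3)})$; as indicated, this is immediate from Theorem \ref{th:Jacobian}(3) combined with the equality $J = c\,\Phi_{7,A_1(3)}$.
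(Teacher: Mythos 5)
Your proposal is correct and follows essentially the same route as the paper: take the Jacobian identity $J(g_1,\dots,g_4)=c\,\Phi_{7,A_1(3)}$ from the preceding lemma, verify that $\operatorname{div}(J)$ is exactly the set of reflection mirrors of $\Orth_1(2U(3)\oplus A_1)$ with multiplicity one, and apply Theorem \ref{th:converseJacobian}; the extension to arbitrary linearly independent quadruples via $\dim M_1=4$ is also how the paper reads the statement. Your explicit check that no ``hidden'' reflection mirrors exist (via Theorem \ref{th:Jacobian}) is a detail the paper leaves implicit, but it is the intended argument.
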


In particular, the $28$ products of weight $1$ other than $\Delta_1$ span a $4$-dimensional space. ($\Delta_1$ has a nontrivial character on $\Orth_1(2U(3) \oplus A_1)$ and in particular does not lie in their span.)

\begin{remark}
The space of modular forms of weight $1/2$ for the Weil representation attached to $2U(3)\oplus A_1$ is $8$-dimensional, and these map to a four-dimensional space of modular forms of weight one under the additive theta lift. It follows that all of the $28$ products of weight $1$ other than $\Delta_1$ are additive lifts.  Similarly to $2U(2)\oplus 2A_1$, we conclude that $M_*(\widetilde{\Orth}^+(2U(3)\oplus A_1))$ is freely generated by the four additive lifts and that
$$
\widetilde{\Orth}^+(2U(3)\oplus A_1)=\Orth_1(2U(3)\oplus A_1).
$$
By the Eichler criterion, the forms $\Delta_1$ and $\Phi_{7,A_1(3)}/\Delta_1$ are irreducible on $\Orth_1(2U(3)\oplus A_1)$.
\end{remark}

\section{The \texorpdfstring{$U(4)\oplus U(2)\oplus A_1$}{} lattice}
In this section we determine two interesting free algebras of modular forms on $U(4)\oplus U(2)\oplus A_1$. We first work out Borcherds products on $U(4)\oplus U(2)\oplus A_1$:
\vspace{2mm}
\begin{enumerate}
\item There are 8 holomorphic Borcherds products of weight $1/2$ and 3 holomorphic products of weight $1$ on $U(4)\oplus U(2)\oplus A_1$, all with principal parts of the form
$$
q^{-1/4}e_v, \quad (v,v)=1/2, \; \ord(v)=2.
$$
Their product is a 2-reflective modular form of weight $7$ which we label $\Phi_{7,A_1(4)}$.
\item There are 16 other holomorphic Borcherds products of weight $1$. They are all reflective. Twelve of them have input forms with principal parts
$$
q^{-1/4}(e_v+e_{-v}), \quad (v,v)=1/2, \; \ord(v)=4.
$$
The remaining four have input forms with principal parts 
$$
q^{-1/2}e_v, \quad (v,v)=1, \; \ord(v)=2.
$$
\end{enumerate}

Let $\Orth_1(U(4)\oplus U(2)\oplus A_1)$ be the subgroup of $\Orth^+(U(4)\oplus U(2)\oplus A_1)$ generated by all reflections associated to the divisor of $\Phi_{7,A_1(4)}$. Then the 16 products of weight $1$ in (2) are modular forms of trivial character on $\Orth_1(U(4)\oplus U(2)\oplus A_1)$.

Let $h_1, ..., h_4$ denote the four products in (2) whose input forms have principal part $q^{-1/2} e_v$ with $v$ of order two.
\begin{lemma}
The Jacobian $J = J(h_1,h_2,h_3,h_4)$ is equals $\Phi_{7,A_1(4)}$ up to a non-zero constant multiple. The four forms $h_1$, $h_2$, $h_3$ and $h_4$ are algebraically independent over $\CC$.
\end{lemma}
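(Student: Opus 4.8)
The plan is to argue exactly as for the Jacobians $\Phi_{5,A_1}$, $\Phi_{10,2A_1}\Phi_{4,2A_1}$, $\Phi_{11,A_1(2)}$ and $\Phi_{7,A_1(3)}$ in the previous sections. Set $M := U(4)\oplus U(2)\oplus A_1$, which has signature $(3,2)$, so that $n=3$ and we are dealing with $n+1=4$ modular forms; write $\Gamma := \Orth_1(U(4)\oplus U(2)\oplus A_1)$.

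First I would compute enough Fourier coefficients of the vector-valued input forms into the Borcherds lifts $h_1,\dots,h_4$ (with the algorithm of \cite{Wi18}) to obtain the Fourier expansion of $J := J(h_1,h_2,h_3,h_4)$ at the chosen $0$-dimensional cusp, to some precision such as $O(q,s)^{10}$. The goal of this step is to see that $J\not\equiv 0$; experience with the earlier lattices suggests that in fact $J$ will coincide, to that precision, with an explicit nonzero constant multiple of $\Phi_{7,A_1(4)}$, which is good confirmation of the stated identity.

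The rest is formal. Each $h_i$ is a holomorphic modular form of weight $1$ with trivial character on $\Gamma$ (as recorded above, this is visible from the divisors of the $h_i$), so Theorem \ref{th:Jacobian}(1) shows that $J$ is a holomorphic modular form of weight $n+\sum_{i=1}^4 1 = 7$ on $\Gamma$ with the determinant character. By Theorem \ref{th:Jacobian}(3), $J$ vanishes on the mirror $\cD_r(M)$ of every reflection $\sigma_r\in\Gamma$; since $\Gamma$ is by definition generated by the reflections whose mirrors constitute the divisor of $\Phi_{7,A_1(4)}$, and that divisor is reduced (it is the product of the weight-$1/2$ and weight-$1$ Borcherds products of item (1), each contributing a single reduced component $\cD_v$ with $(v,v)=1/2$ and $\ord(v)=2$), we get $\div(J)\geq\div(\Phi_{7,A_1(4)})$. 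Hence $J/\Phi_{7,A_1(4)}$ is a holomorphic modular form of weight $7-7=0$ on $\Gamma$, so it is constant, and nonzero because $J\not\equiv 0$; this is the claimed identity $J=c\,\Phi_{7,A_1(4)}$ with $c\in\CC^*$. Finally, $J\not\equiv 0$ together with Theorem \ref{th:Jacobian}(2) gives that $h_1,\dots,h_4$ are algebraically independent over $\CC$.

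The only real obstacle is the first, computational step: one must make sure the Fourier expansion of $J$ is computed to high enough precision, and in consistent conventions, to certify a single nonzero coefficient — once $J\not\equiv 0$ is in hand, the identification $J=c\,\Phi_{7,A_1(4)}$ and the algebraic independence are automatic from Theorem \ref{th:Jacobian}. This identification is precisely the hypothesis needed to apply Theorem \ref{th:converseJacobian} to $h_1,\dots,h_4$ in the next step, giving that $M_*(\Gamma)=\CC[h_1,h_2,h_3,h_4]$ and hence that $(\cD_3/\Gamma)^*\cong\PP^3(\CC)$.
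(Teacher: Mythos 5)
Your proposal is correct and follows exactly the paper's argument: show $J$ is a nonzero holomorphic modular form of weight $7$ vanishing on all mirrors of reflections in $\Orth_1(U(4)\oplus U(2)\oplus A_1)$, conclude that $J/\Phi_{7,A_1(4)}$ is a holomorphic weight-zero form and hence a nonzero constant, with the nonvanishing certified by a Fourier coefficient computation. The paper's own proof is just a terse version of this same reasoning, deferring to the identical arguments in the earlier sections.
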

\begin{proof} As in the previous sections the quotient $J / \Phi_{7, A_1(4)}$ is holomorphic of weight zero and therefore constant. We checked by computer that this constant is not zero.
\end{proof}
Theorem \ref{th:converseJacobian} yields the following structure theorem:
\begin{theorem}\label{th:projective14}
\begin{align*}
M_*(\Orth_1(U(4)\oplus U(2)\oplus A_1)) &=\CC[h_1,h_2,h_3,h_4],\\
(\cD_{3}/ \Orth_1(U(4)\oplus U(2)\oplus A_1))^* &\cong \PP^3(\CC).
\end{align*}
\end{theorem}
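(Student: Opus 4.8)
The plan is to deduce this from Theorem \ref{th:converseJacobian} applied to $\Gamma=\Orth_1(U(4)\oplus U(2)\oplus A_1)$ with the four weight-one Borcherds products $h_1,\dots,h_4$, exactly in the style of the proofs of Theorems \ref{th:projective9} and \ref{th:projective13}. The hypotheses of that criterion are that $h_1,\dots,h_4$ carry the trivial character and that their Jacobian vanishes exactly on the mirrors of reflections in $\Gamma$, each with multiplicity one. The trivial character and the identity $J(h_1,h_2,h_3,h_4)=c\,\Phi_{7,A_1(4)}$ for some $c\neq 0$ are both already recorded above, so the task reduces to pinning down the divisor of $\Phi_{7,A_1(4)}$.

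First I would observe that $\Phi_{7,A_1(4)}$ is, by construction, the product of the $8$ weight-$1/2$ and $3$ weight-$1$ holomorphic Borcherds products on $U(4)\oplus U(2)\oplus A_1$ whose inputs have principal part $q^{-1/4}e_v$ with $(v,v)=1/2$ and $\ord(v)=2$. By Borcherds' product theorem each such factor has a simple zero along $\cD_v$ and no other zero, so $\mathrm{div}(\Phi_{7,A_1(4)})$ is the sum of these finitely many divisor classes $\cD_v$, each with multiplicity one. By definition $\Orth_1(U(4)\oplus U(2)\oplus A_1)$ is generated by the reflections $\sigma_v$ fixing precisely these $\cD_v$, so their mirrors are contained in $\mathrm{div}(\Phi_{7,A_1(4)})$.

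Next I would check the reverse inclusion: every reflection of $\Gamma$ has its mirror among the $\cD_v$. Since the $h_i$ are modular for $\Gamma$, part (3) of Theorem \ref{th:Jacobian} forces $J=J(h_1,\dots,h_4)$ --- hence $\Phi_{7,A_1(4)}$ --- to vanish on the mirror of every reflection in $\Gamma$. Combining the two inclusions, the mirrors of reflections in $\Gamma$ are exactly the divisors occurring in $\mathrm{div}(\Phi_{7,A_1(4)})$, each with multiplicity one. Theorem \ref{th:converseJacobian} then yields $M_*(\Orth_1(U(4)\oplus U(2)\oplus A_1))=\CC[h_1,h_2,h_3,h_4]$ together with the statement that $\Gamma$ is generated by these reflections. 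Since all four generators have weight $1$, $\Proj$ of this polynomial ring is $\PP^3(\CC)$, which by \cite{BB66} is the Baily--Borel compactification $(\cD_{3}/\Orth_1(U(4)\oplus U(2)\oplus A_1))^*$.

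The real work --- already done in the preceding lemma --- is proving that $J(h_1,h_2,h_3,h_4)$ is not identically zero (equivalently, that $h_1,\dots,h_4$ are algebraically independent): this requires computing enough Fourier coefficients of the four products $h_i$ to see that the $4\times 4$ determinant of Theorem \ref{th:Jacobian} is nonzero, whence on weight grounds it must be a constant multiple of $\Phi_{7,A_1(4)}$. Once that computation is in hand everything else is formal, so I expect the Fourier-coefficient computation to be the only genuine obstacle.
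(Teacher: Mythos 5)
Your proposal is correct and takes essentially the same route as the paper: the preceding lemma identifies $J(h_1,h_2,h_3,h_4)$ with a nonzero constant multiple of $\Phi_{7,A_1(4)}$, whose divisor is precisely the mirrors of the reflections generating (hence, by Theorem \ref{th:Jacobian}(3), of all reflections in) $\Orth_1(U(4)\oplus U(2)\oplus A_1)$ with multiplicity one, and Theorem \ref{th:converseJacobian} then gives both conclusions. The only substantive input is the computer verification that the Jacobian is not identically zero, exactly as you identify.
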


\begin{corollary}
The $16$ products of weight one in $(2)$ span a $4$-dimensional space.  The $8$ squares of weight $1/2$ products also span this space.
\end{corollary}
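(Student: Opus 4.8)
The plan is to read off both assertions from Theorem \ref{th:projective14}. That theorem identifies $M_*(\Orth_1(U(4)\oplus U(2)\oplus A_1))$ with the polynomial ring $\CC[h_1,h_2,h_3,h_4]$; in particular the space $M_1(\Orth_1(U(4)\oplus U(2)\oplus A_1))$ of weight-one modular forms is exactly $\CC h_1\oplus\CC h_2\oplus\CC h_3\oplus\CC h_4$, and hence is $4$-dimensional. So both claims amount to showing that the relevant sets of weight-one forms lie in this space and span it.

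For the sixteen weight-one products in $(2)$: as observed just before the lemma preceding Theorem \ref{th:projective14}, each of them has trivial character on $\Orth_1(U(4)\oplus U(2)\oplus A_1)$ and hence lies in the $4$-dimensional space $M_1(\Orth_1(U(4)\oplus U(2)\oplus A_1))$. Since $h_1,\dots,h_4$ are themselves four of these sixteen products and are linearly independent (being algebraically independent by the preceding lemma), the sixteen products already span all of $M_1(\Orth_1(U(4)\oplus U(2)\oplus A_1))$, which is $4$-dimensional.

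For the eight squares of weight-$1/2$ products: write $\psi_1,\dots,\psi_8$ for the eight weight-$1/2$ Borcherds products whose product, together with the three weight-one products, is $\Phi_{7,A_1(4)}$, so that $\psi_i$ has divisor the single rational quadratic divisor $\cD_{v_i}$ with $(v_i,v_i)=1/2$ and $\ord(v_i)=2$. As in \S\ref{sec:A1}, $\Orth_1(U(4)\oplus U(2)\oplus A_1)$ is contained in the discriminant kernel $\widetilde{\Orth}^+(U(4)\oplus U(2)\oplus A_1)$, so each reflection generating $\Orth_1(U(4)\oplus U(2)\oplus A_1)$ fixes $v_i$ modulo the lattice and therefore sends $\psi_i$ to a Borcherds product with the same input form, i.e.\ to $c\,\psi_i$ for a root of unity $c$; since the reflection is an involution, $c=\pm1$, so it fixes $\psi_i^2$. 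Hence each $\psi_i^2$ has trivial character on $\Orth_1(U(4)\oplus U(2)\oplus A_1)$ and lies in the same $4$-dimensional space $M_1(\Orth_1(U(4)\oplus U(2)\oplus A_1))$. It then suffices to produce four linearly independent forms among $\psi_1^2,\dots,\psi_8^2$, which follows from a finite computation with the Fourier expansions of the $\psi_i$ (computed by the method used throughout the paper); this shows the eight squares span the same $4$-dimensional space as the sixteen weight-one products. The argument is essentially formal once Theorem \ref{th:projective14} is available; the only step that is not automatic is this linear-independence check, which is a routine inspection of finitely many Fourier coefficients, so there is no real obstacle.
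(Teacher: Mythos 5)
Your proposal is correct and follows the same route the paper implicitly takes: both statements reduce to the fact that $M_1(\Orth_1(U(4)\oplus U(2)\oplus A_1))$ is the $4$-dimensional span of $h_1,\dots,h_4$ from Theorem \ref{th:projective14}, combined with the observation that the sixteen weight-one products and the squares $d_i^2$ have trivial character on $\Orth_1$ (the latter because each $v_i$ takes values $\pm 1$ on the generating $2$-reflections) and a finite Fourier-coefficient check of linear independence. No gaps.
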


We label the $8$ weight $1/2$ products $d_i$, $1\leq i\leq 8$. Let $v_i$ be the multiplier system of $d_i$ on $\Orth_1(U(4)\oplus U(2)\oplus A_1)$. Assume that $d_i^2$ for $1\leq i\leq 4$ are linearly independent over $\CC$. Let $\Orth_{1'}(U(4)\oplus U(2)\oplus A_1)$ be the subgroup of $\Orth_1(U(4)\oplus U(2)\oplus A_1)$ generated by all reflections associated to the divisor of $\Phi_{7,A_1(4)}/ (\prod_{j=1}^4d_j)$. The four multiplier systems $v_i$ for $1\leq i\leq 4$ coincide on  $\Orth_{1'}(U(4)\oplus U(2)\oplus A_1)$ because the quotient of any two of them is a character of $\Orth_{1'}(U(4)\oplus U(2)\oplus A_1)$ which equals one on the generator reflections of that group. Their common restriction defines a multiplier system of $\Orth_{1'}(U(4)\oplus U(2)\oplus A_1)$ and we denote it by $v$. This implies that $\Orth_{1'}(U(4)\oplus U(2)\oplus A_1)$ does not contain reflections associated to the divisor of $\prod_{j=1}^4d_j$, and that $v=1$ for all reflections in $\Orth_{1'}(U(4)\oplus U(2)\oplus A_1)$. We obtain the following result:
\begin{theorem}\label{th:projective15}
\begin{align*}
M_*(\Orth_{1'}(U(4)\oplus U(2)\oplus A_1),v) &=\CC[d_1,d_2,d_3,d_4],\\
(\cD_{3}/ \Orth_{1'}(U(4)\oplus U(2)\oplus A_1))^* &\cong \PP^3(\CC).
\end{align*}
\end{theorem}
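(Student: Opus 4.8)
The plan is to apply Theorem \ref{th:criterion} with $\Gamma = \Orth_{1'}(U(4)\oplus U(2)\oplus A_1)$, the multiplier system $v$, and the four weight-$1/2$ products $d_1,d_2,d_3,d_4$. The paragraph preceding the theorem has already arranged everything needed about $\Gamma$: each $d_i$ with $1\le i\le 4$ is a modular form of weight $1/2$ and multiplier $v$ on $\Gamma$, and $v(\sigma_r)=1$ for every reflection $\sigma_r$ in $\Gamma$. Hence the whole task reduces to showing that the Jacobian $J=J(d_1,d_2,d_3,d_4)$ vanishes exactly, and to order one, along the mirrors of the reflections in $\Gamma$.

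First I would pin down $\div(J)$ by comparing $J$ with the holomorphic form $\Phi := \Phi_{7,A_1(4)}/\prod_{j=1}^4 d_j$. Since $\Phi_{7,A_1(4)}$ is literally the product of the eight weight-$1/2$ products $d_1,\dots,d_8$ and the three weight-$1$ products, $\Phi$ equals $d_5 d_6 d_7 d_8$ times those three weight-$1$ Borcherds products; it is holomorphic of weight $5$, and its divisor is the sum of seven distinct mirrors with multiplicity one, precisely the mirrors of the reflections generating $\Gamma$. On the other hand $J$ is a modular form of weight $3+4\cdot\tfrac12=5$ on $\Gamma$, with a multiplier of the form $v^{10}\det$, by the half-integral version of Theorem \ref{th:Jacobian}(1); and by the half-integral version of Theorem \ref{th:Jacobian}(3) (as used in the proof of Theorem \ref{th:criterion}) it vanishes along the mirror $\cD_r$ of every reflection $\sigma_r\in\Gamma$ because $v(\sigma_r)=1$. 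Since $J$ carries a determinant-type character it is odd across each such mirror, so it vanishes there to positive order. Therefore $J/\Phi$ is holomorphic of weight $0$ on $\Gamma$, hence constant.

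Next I would verify that this constant is nonzero, namely that $J\not\equiv 0$, which follows formally from the hypothesis. The squares $d_1^2,\dots,d_4^2$ lie in $M_1(\Orth_1(U(4)\oplus U(2)\oplus A_1))$, which by Theorem \ref{th:projective14} is four-dimensional with algebraically independent basis $h_1,\dots,h_4$; if the $d_i^2$ are linearly independent they are a basis, hence algebraically independent, hence so are $d_1,\dots,d_4$ by a transcendence-degree comparison, and then $J\not\equiv 0$ by the half-integral version of Theorem \ref{th:Jacobian}(2) (a direct Fourier-coefficient check would also do). It follows that $\div(J)=\div(\Phi)$: the seven mirrors, each with multiplicity one. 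These are exactly the mirrors of all reflections in $\Gamma$, since they include the generating mirrors, and conversely any reflection $\sigma_r\in\Gamma$ has $v(\sigma_r)=1$, so $J$ vanishes on $\cD_r$ and $\cD_r$ is forced to be one of the seven components of $\div(J)$. Theorem \ref{th:criterion} now gives $M_*(\Gamma,v)=\CC[d_1,d_2,d_3,d_4]$; since all four generators have weight $1/2$, $\Proj(\CC[d_1,d_2,d_3,d_4])=\PP^3(\CC)$, and by the invariance of $\Proj$ under Veronese embeddings noted in \S\ref{sec:preliminaries} this equals $(\cD_3/\Gamma)^*$.

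The step carrying the real content is the identification $\div(J)=\div(\Phi)$. All the force of the construction of $\Gamma$ — that it is generated by exactly the reflections in $\div(\Phi)$, together with the already-established fact that $v\equiv 1$ on every reflection of $\Gamma$ — is used precisely there, to force $\Phi\mid J$ with no leftover zeros. Once that is in hand the rest is the standard Jacobian argument of the paper, and the only computational ingredient, the non-vanishing of $J$, has already been reduced to the assumed linear independence of the $d_i^2$.
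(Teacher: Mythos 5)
Your proposal is correct and follows essentially the same route the paper intends: apply Theorem \ref{th:criterion} to $d_1,\dots,d_4$ with the common multiplier system $v$, identify $J(d_1,\dots,d_4)$ with $\Phi_{7,A_1(4)}/\prod_{j=1}^4 d_j$ via the standard holomorphic-quotient-of-weight-zero argument, and invoke the invariance of $\Proj$ under Veronese embeddings. The one pleasant refinement is that you deduce $J\not\equiv 0$ formally from the assumed linear independence of the $d_i^2$ together with the algebraic independence of $h_1,\dots,h_4$ from Theorem \ref{th:projective14}, whereas the paper's analogous nonvanishing checks are done by computing Fourier coefficients.
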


\begin{remark}
There are four linearly independent additive theta lifts of weight $1$ for the Weil representation attached to $U(4)\oplus U(2)\oplus A_1$. This implies that every weight $1$ modular form of trivial character on $\Orth_1(U(4)\oplus U(2)\oplus A_1)$ is an additive lift and therefore is modular under the full discriminant kernel. Similarly to Remark \ref{rem:irreducible-Phi4}, we conclude that $M_*(\widetilde{\Orth}^+(U(4)\oplus U(2)\oplus A_1))$ is freely generated by the four additive lifts and that
$$
\widetilde{\Orth}^+(U(4)\oplus U(2)\oplus A_1)=\Orth_1(U(4)\oplus U(2)\oplus A_1).
$$
\end{remark}

\section{The \texorpdfstring{$U \oplus U(2) \oplus A_1(2)$}{} lattice}\label{sec:U2A1(2)}
In this section we find two interesting free algebras of modular forms for the $U \oplus U(2) \oplus A_1(2)$ lattice. We first describe some Borcherds products:
\vspace{2mm}
\begin{enumerate}
\item There are 3 holomorphic Borcherds products $t_1, t_2, t_3$ of weight $1$. They have principal parts of the form
$$
q^{-1/8}(e_v + e_{-v}), \quad (v,v) = 1/4, \; \ord(v)=4.
$$
\item There is a holomorphic Borcherds product $\Psi_{8,A_1(2)}$ of weight $8$ whose principal part with respect to the Gram matrix
$$
\begin{psmallmatrix}
0 & 0 & 0 & 0 & 1 \\ 
0 & 0 & 0 & 2 & 0 \\ 
0 & 0 & 4 & 0 & 0 \\ 
0 & 2 & 0 & 0 & 0 \\ 
1 & 0 & 0 & 0 & 0
\end{psmallmatrix}
$$
is
\begin{align*} 
&-q^{-1/8} (e_{(0, 0, 1/4, 0, 0)} + e_{(0, 0, 3/4, 0, 0)} + e_{(0, 0, 1/4, 1/2, 0)} \\ & \quad \quad \quad + e_{(0, 0, 3/4, 1/2, 0)} + e_{(0, 1/2, 1/4, 0, 0)} + e_{(0, 1/2, 3/4, 0, 0)}) \\ &+ q^{-1/2} e_{(0, 0, 1/2, 0, 0)} + q^{-1} e_{(0, 0, 0, 0, 0)}. & 
\end{align*}
\end{enumerate}

In addition, we let $m_2$ be the additive theta lift of the (unique) modular form of weight $3/2$ for the Weil representation attached to $U \oplus U(2) \oplus A_1(2)$ with constant term $1 \mathfrak{e}_0$; in particular, $m_2$ has weight two. 

Let $\Orth_1(U \oplus U(2) \oplus A_1(2))$ and $\Orth_2(U \oplus U(2) \oplus A_1(2))$ be the subgroups of $\Orth^+(U \oplus U(2) \oplus A_1(2))$ generated by the reflections associated to the divisors of $\Psi_{8,A_1(2)}$ and $\Psi_{8,A_1(2)}t_1t_2t_3$ respectively. By considering the actions of these reflections on the input forms, we see that $t_1, t_2, t_3$ are modular forms with a character of order two on $\Orth_2(U \oplus U(2) \oplus A_1(2))$, and $m_2$ is modular on $\Orth_2(U \oplus U(2) \oplus A_1(2))$ without character. Moreover, $t_1, t_2, t_3$ have trivial character on $\Orth_1(U \oplus U(2) \oplus A_1(2))$.

\begin{lemma} The Jacobian $J = J(t_1, t_2, t_3, m_2)$ equals $\Psi_{8,A_1(2)}$ up to a nonzero multiple.
\end{lemma}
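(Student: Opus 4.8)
The plan is to run the same argument used for all the earlier Jacobian lemmas, e.g. Lemmas \ref{lem:2U4A1jacobian} and \ref{lem:2A1}. The domain $\cD(U \oplus U(2) \oplus A_1(2))$ has dimension $n = 3$, so $J = J(t_1, t_2, t_3, m_2)$ is the modular Jacobian of the $n+1 = 4$ forms of weights $1, 1, 1, 2$. First I would record that $t_1, t_2, t_3$ and $m_2$ are all holomorphic modular forms with trivial character on $\Orth_1(U \oplus U(2) \oplus A_1(2))$: this was noted above for $t_1, t_2, t_3$, and $m_2$ is a holomorphic additive theta lift, so it is modular with trivial character on the full discriminant kernel and a fortiori on $\Orth_1(U \oplus U(2) \oplus A_1(2))$. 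By Theorem \ref{th:Jacobian}(1), $J$ is then a cusp form of weight $3 + (1 + 1 + 1 + 2) = 8$ for $\Orth_1(U \oplus U(2) \oplus A_1(2))$ with the character $\det$, matching the weight of $\Psi_{8, A_1(2)}$.

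Next I would use that $J$ changes sign under any reflection $\sigma_r \in \Orth_1(U \oplus U(2) \oplus A_1(2))$, since it transforms with $\det$, while being fixed along the mirror $\cD_r$; hence $J$ vanishes along $\cD_r$ (this is the content of Theorem \ref{th:Jacobian}(3), extended from $r \in M$ to reflective $r \in M^\vee$ by the same one-line argument). By construction $\Orth_1(U \oplus U(2) \oplus A_1(2))$ is generated by the reflections whose mirrors make up $\div \Psi_{8, A_1(2)}$, and this divisor is reduced — each component occurs with multiplicity one, as one checks from the principal part of the input to $\Psi_{8, A_1(2)}$. Therefore $\div J \ge \div \Psi_{8, A_1(2)}$, so $J / \Psi_{8, A_1(2)}$ is a holomorphic modular form of weight zero and hence a constant. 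To see this constant is nonzero I would compute the Fourier expansions of $t_1, t_2, t_3$ from their Borcherds product data and of $m_2$ from the weight $3/2$ input to its theta lift, evaluate the $4 \times 4$ determinant defining $J$, and check that it agrees with a nonzero multiple of the Fourier expansion of $\Psi_{8, A_1(2)}$ to a suitable precision such as $O(q,s)^{10}$; this already forces $J \not\equiv 0$.

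The one genuinely delicate point — and the step I expect to be the main, if minor, obstacle — is the divisor bookkeeping: one must verify that $\Psi_{8, A_1(2)}$ vanishes to order exactly one along each of its components, so that the order-$\ge 1$ vanishing of $J$ really dominates it. This requires the full Borcherds divisor formula for the given input; in particular the order-$4$ contributions coming from the $q^{-1/8}$ terms must be seen to cancel against the $j = 2$ contributions of the $q^{-1/2}$ term, leaving only the $2$-reflective divisors and the norm-$1$ order-$2$ divisors, each with multiplicity one. Everything else is the routine non-vanishing computation, done by computer exactly as in the earlier sections.
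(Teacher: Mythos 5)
Your proposal is correct and follows essentially the same route as the paper: the authors likewise verify $J \not\equiv 0$ by computing Fourier expansions (finding $J = (-1/3)\Psi_{8,A_1(2)}$ to precision $O(q,s)^{10}$) and then argue, exactly as in the earlier sections, that $J/\Psi_{8,A_1(2)}$ is a holomorphic modular form of weight zero and hence constant. The additional care you take with the divisor bookkeeping for $\Psi_{8,A_1(2)}$ and with extending the vanishing statement to reflective $r \in M^\vee$ is implicit in the paper's appeal to the previous sections, so there is no substantive difference.
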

\begin{proof} Using Fourier series we computed that $J$ is not identically zero (and equals $(-1/3) \Psi_{8,A_1(2)}$ up to precision $O(q, s)^{10}$). As in the previous sections $J / \Psi_{8,A_1(2)}$ is holomorphic of weight zero and therefore constant.
\end{proof}

Therefore, we can apply Theorem \ref{th:converseJacobian}:

\begin{theorem}\label{th:projective12}
\begin{align*}
M_*(\Orth_1(U \oplus U(2) \oplus A_1(2))) &=\CC[t_1, t_2, t_3, m_2],\\
M_*(\Orth_2(U \oplus U(2) \oplus A_1(2))) &=\CC[t_1^2, t_2^2, t_3^2, m_2],\\
(\cD_{3}/ \Orth_2(U \oplus U(2) \oplus A_1(2)))^* &\cong \PP^3(\CC).
\end{align*}
\end{theorem}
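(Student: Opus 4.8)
The plan is to invoke Theorem \ref{th:converseJacobian} twice and then read off the $\Proj$: once with $\Gamma = \Orth_1(U \oplus U(2) \oplus A_1(2))$ and the quadruple $(t_1, t_2, t_3, m_2)$, and once with $\Gamma = \Orth_2(U \oplus U(2) \oplus A_1(2))$ and the quadruple $(t_1^2, t_2^2, t_3^2, m_2)$. In both cases the lemma above already identifies the relevant Jacobian, up to a nonzero scalar, with a Borcherds product whose divisor is reduced by construction, so the substantive point is only to match that divisor with the mirrors of reflections in the group at hand.

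For the first identity: as noted above, $t_1, t_2, t_3$ are holomorphic of weight one with trivial character on $\Orth_1$, and $m_2$ is holomorphic of weight two without character on $\Orth_2 \supseteq \Orth_1$, hence on $\Orth_1$. The lemma gives $J(t_1, t_2, t_3, m_2) = c\,\Psi_{8, A_1(2)}$ with $c \neq 0$, so it suffices to check that $\div \Psi_{8, A_1(2)}$ is reduced with support equal to the union of mirrors of the reflections generating $\Orth_1$. The support is correct by definition of $\Orth_1$, and reducedness is read off the principal part of the input via Borcherds' divisor formula --- the key point being that along each order-four direction the coefficient $-1$ of the $q^{-1/8}\mathfrak{e}_v$ term is exactly cancelled by the $+1$ of the $q^{-1/2}\mathfrak{e}_{(0,0,1/2,0,0)}$ term (as $2v \equiv (0,0,1/2,0,0) \bmod M$ for those $v$), so that no order-four divisor survives and the remaining two-reflective and norm-one divisors occur with multiplicity one. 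Theorem \ref{th:converseJacobian} then gives $M_*(\Orth_1(U \oplus U(2) \oplus A_1(2))) = \CC[t_1, t_2, t_3, m_2]$.

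For the second identity: on $\Orth_2$ the $t_i$ carry a character of order two, so $t_1^2, t_2^2, t_3^2$ and $m_2$ all have trivial character there. Pulling the factor $2t_i$ out of the $i$-th column of the determinant defining the modular Jacobian gives
$$
J(t_1^2, t_2^2, t_3^2, m_2) = 8\, t_1 t_2 t_3 \cdot J(t_1, t_2, t_3, m_2) = 8c\, t_1 t_2 t_3 \,\Psi_{8, A_1(2)}.
$$
Since $J(t_1^2, t_2^2, t_3^2, m_2)$ transforms with the determinant character under $\Orth_2$, it vanishes on the mirror of every reflection in $\Orth_2$; conversely $\Orth_2$ is generated by reflections whose mirrors lie in $\mathrm{supp}\,\div(\Psi_{8, A_1(2)} t_1 t_2 t_3)$, so the two sets agree. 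It remains to see that $\div t_1, \div t_2, \div t_3, \div \Psi_{8, A_1(2)}$ are each reduced and pairwise disjoint, so that the product divisor is reduced: each $\div t_i$ is reduced because $-1/8$ is the only negative exponent in the input of $t_i$; the $\div t_i$ are mutually disjoint because $t_1, t_2, t_3$ are distinct Borcherds products and hence have inputs supported on three distinct $\pm$-classes in $M^\vee/M$; and each $\div t_i$ is disjoint from $\div \Psi_{8, A_1(2)}$ precisely because those order-four classes are the ones cancelled out of $\div \Psi_{8, A_1(2)}$ above. Theorem \ref{th:converseJacobian} then yields $M_*(\Orth_2(U \oplus U(2) \oplus A_1(2))) = \CC[t_1^2, t_2^2, t_3^2, m_2]$, and as this is a polynomial ring on four generators all of weight two,
$$
(\cD_3 / \Orth_2(U \oplus U(2) \oplus A_1(2)))^* = \Proj\, \CC[t_1^2, t_2^2, t_3^2, m_2] \cong \PP(2,2,2,2) \cong \PP^3(\CC).
$$

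The main obstacle is the divisor bookkeeping behind the two applications of Theorem \ref{th:converseJacobian}: one must run through the discriminant form of $U \oplus U(2) \oplus A_1(2)$ and the Borcherds divisor formula carefully enough to confirm the cancellation of the order-four divisors in $\div \Psi_{8, A_1(2)}$, the reducedness of each $\div t_i$, and the pairwise disjointness of $\div t_1, \div t_2, \div t_3, \div \Psi_{8, A_1(2)}$. This is a finite check, carried out by computer as in the earlier sections.
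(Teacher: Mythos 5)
Your proposal is correct and follows the same route as the paper: after the lemma identifying $J(t_1,t_2,t_3,m_2)$ with $\Psi_{8,A_1(2)}$, the paper simply applies Theorem \ref{th:converseJacobian} to both groups, exactly as you do. The details you supply --- the factorization $J(t_1^2,t_2^2,t_3^2,m_2)=8\,t_1t_2t_3\,J(t_1,t_2,t_3,m_2)$, the cancellation of the order-four divisors in $\div\Psi_{8,A_1(2)}$, and the reducedness/disjointness checks --- are left implicit in the paper but are accurate and consistent with its conventions.
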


Furthermore, let $\Orth_{2'}(U \oplus U(2) \oplus A_1(2))$ be the subgroup of $\Orth^+(U \oplus U(2) \oplus A_1(2))$ generated by the reflections associated to the divisor of $\Psi_{8,A_1(2)}t_2t_3$. We then have another realization of $\PP^3(\CC)$.

\begin{theorem}\label{th:projective12-2}
\begin{align*}
M_*(\Orth_{2'}(U \oplus U(2) \oplus A_1(2))) &=\CC[t_1, t_2^2, t_3^2, m_2],\\
(\cD_{3}/ \Orth_{2'}(U \oplus U(2) \oplus A_1(2)))^* &\cong\PP(1,2,2,2)\cong \PP^3(\CC).
\end{align*}
\end{theorem}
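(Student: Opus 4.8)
The plan is to apply Theorem~\ref{th:converseJacobian} to the four forms $t_1$, $t_2^2$, $t_3^2$, $m_2$. First I would check that all four are modular with trivial character on $\Orth_{2'}(U \oplus U(2) \oplus A_1(2))$. For $t_2^2$, $t_3^2$ and $m_2$ this is immediate, since these already have trivial character on the larger group $\Orth_2(U \oplus U(2) \oplus A_1(2)) \supseteq \Orth_{2'}(U \oplus U(2) \oplus A_1(2))$ by Theorem~\ref{th:projective12}. For $t_1$ it must be checked separately: the group $\Orth_{2'}(U \oplus U(2) \oplus A_1(2))$ is generated over $\Orth_1(U \oplus U(2) \oplus A_1(2))$ by the reflections whose mirrors lie in $\div t_2 + \div t_3$, and since $t_1$ vanishes on none of those mirrors, the point is that each such reflection fixes the input form of $t_1$ and hence fixes $t_1$ itself. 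This is verified from the explicit reflective vectors of the lattice, in the same spirit as the $*$-set computations of the earlier sections; it also shows that $t_1$ has honestly trivial, and not merely order-two, character on $\Orth_{2'}(U \oplus U(2) \oplus A_1(2))$, since this group contains no reflection with mirror in $\div t_1$.

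Next I would compute the Jacobian. Replacing an argument $f$ by $f^2$ scales the corresponding column of the Jacobian matrix by $2f$ (the top entry because the weight doubles, the derivative entries because $\partial(f^2) = 2f\,\partial f$), so
$$
J(t_1, t_2^2, t_3^2, m_2) = 4\, t_2 t_3 \cdot J(t_1, t_2, t_3, m_2),
$$
which by the earlier lemma that $J(t_1,t_2,t_3,m_2)$ equals $\Psi_{8,A_1(2)}$ up to a nonzero constant gives $\div J(t_1,t_2^2,t_3^2,m_2) = \div \Psi_{8,A_1(2)} + \div t_2 + \div t_3$. I would then verify that this matches the hypothesis of Theorem~\ref{th:converseJacobian}: each of the three summands is reduced ($\Psi_{8,A_1(2)}$ as the Jacobian of a free system of generators, cf.\ Theorem~\ref{th:freeJacobian}(2); and $t_2$, $t_3$ because their input principal parts $q^{-1/8}(e_v+e_{-v})$ have coefficient one), and they pairwise share no component ($\div t_i$ contains no $\Orth_1(U \oplus U(2) \oplus A_1(2))$-mirror since $t_i$ has trivial character there, while $\div t_2$ and $\div t_3$ are disjoint by inspection of the input forms). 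Since $\Orth_{2'}(U \oplus U(2) \oplus A_1(2))$ is by definition generated by precisely the reflections with mirrors in $\div(\Psi_{8,A_1(2)} t_2 t_3)$, while the Jacobian---having character $\det$---vanishes on the mirror of every reflection of $\Orth_{2'}(U \oplus U(2) \oplus A_1(2))$, its divisor is exactly the sum of those mirrors, each with multiplicity one. Theorem~\ref{th:converseJacobian} then yields that $M_*(\Orth_{2'}(U \oplus U(2) \oplus A_1(2))) = \CC[t_1, t_2^2, t_3^2, m_2]$ is freely generated (and reconfirms that $\Orth_{2'}$ is generated by these reflections).

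For the geometric statement, the generators have weights $1,2,2,2$, so
$$
(\cD_3 / \Orth_{2'}(U \oplus U(2) \oplus A_1(2)))^* = \Proj \CC[t_1, t_2^2, t_3^2, m_2] \cong \PP(1,2,2,2),
$$
and $\PP(1,2,2,2) \cong \PP^3(\CC)$ because one may cancel a factor common to all weights but one in a weighted projective space ($\gcd(2,2,2)=2$ being coprime to the remaining weight $1$), exactly as in Theorems~\ref{th:projective5}, \ref{th:projective8} and~\ref{th:projective11}.

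The main obstacle is the very first step: showing that $t_1$ keeps trivial character on $\Orth_{2'}(U \oplus U(2) \oplus A_1(2))$, equivalently that the reflections coming from $t_2$ and $t_3$ fix the input form of $t_1$. In contrast to the purely formal manipulation of the Jacobian and the elementary reduction for the weighted projective space, this rests on the explicit arithmetic of $U \oplus U(2) \oplus A_1(2)$ and its reflective vectors, and is where the computer verification is needed; a secondary but genuine point is the reducedness and no-common-component bookkeeping for $\div(\Psi_{8,A_1(2)} t_2 t_3)$ required to invoke Theorem~\ref{th:converseJacobian} exactly.
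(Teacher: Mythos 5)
Your proposal is correct and follows essentially the same route as the paper: compute $J(t_1,t_2^2,t_3^2,m_2)=4t_2t_3\,J(t_1,t_2,t_3,m_2)$, identify its divisor with the mirrors generating $\Orth_{2'}(U\oplus U(2)\oplus A_1(2))$ via the lemma that $J(t_1,t_2,t_3,m_2)$ equals $\Psi_{8,A_1(2)}$ up to a constant, and apply Theorem~\ref{th:converseJacobian}. The character check for $t_1$ that you single out as the main obstacle is handled in the paper by the same inspection of the action of the reflections on the input forms (and also follows from the softer observation that a quadratic character must equal $1$ on any reflection whose mirror does not lie in $\div t_1$), so it is less of an obstacle than you suggest.
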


\begin{remark}
The following eight groups constructed in the paper are all subgroups of $\Orth^+(2U\oplus A_1)$:
\begin{align*}
&\Orth_1(2U(4)\oplus A_1),& &\Orth_2(2U(4)\oplus A_1),& &\Orth_1(2U(2)\oplus A_1),& &\Orth_{1'}(2U(2)\oplus A_1),&\\
&\Orth_{1''}(2U(2)\oplus A_1),& &\Orth_1(2U(3)\oplus A_1),& &\Orth_1(U(4)\oplus U(2)\oplus A_1),& &\Orth_{1'}(U(4)\oplus U(2)\oplus A_1).&
\end{align*}
For any of them, the Satake--Baily--Borel compactification of modular variety is isomorphic to $\PP^3$. The decomposition into irreducibles of the square of the Jacobian of any generators corresponds to the $\Gamma$-equivalence classes of mirrors of reflections in $\Gamma$ (see Theorem \ref{th:freeJacobian}). In particular, this decomposition is an invariant of the groups up to conjugacy. We give the decomposition for the 16 reflection groups in Table \ref{tab:data}, which shows that the eight subgroups above are pairwise non-conjugate except for the possible cases $$\Orth_1(U(4) \oplus U(2) \oplus A_1) \quad \text{and} \quad \Orth_{1'}(2U(2) \oplus A_1)$$ and $$\Orth_{1'}(U(4) \oplus U(2) \oplus A_1) \quad \text{and} \quad \Orth_1(2U(4) \oplus A_1).$$
\end{remark}

\begin{remark}
The groups of type $\Orth_1(M)$ in our paper are finite index subgroups of the full integral orthogonal group $\Orth^+(M)$. This follows easily from the Margulis normal subgroup theorem  \cite{Mar91}. It can also be proved using the basic argument in the proof of \cite[Theorem 3.1]{Wan20b}.
\end{remark}

\begin{remark}
Many examples in this paper support Conjecture 5.2 in \cite{Wan20} which states that if $M_*(\Gamma)$ is a free algebra for a finite index subgroup $\Gamma$ of $\Orth^+(M)$ then $M_*(\Gamma_1)$ is also free for any other reflection subgroup $\Gamma_1$ satisfying  $\Gamma< \Gamma_1 < \Orth^+(M)$.
\end{remark}

\begin{table}[htp]
\caption{16 reflection groups $\Gamma$ for which $(\mathcal{D}/\Gamma)^*$ is a projective space. The symbol $k(J)$ stands for the weight of the Jacobian $J$. For a group $\Gamma$, the entry $\mathbf{a_1}\times x_1 + \cdots + \mathbf{a_t}\times x_t$ in the rightmost column means that the decomposition of $J^2$ into irreducibles in $M_*(\Gamma)$ consists of $x_i$ forms of weight $\mathbf{a_i}$,  $1\leq i \leq t$. }\label{tab:data}
\renewcommand\arraystretch{1.2}
\noindent\[
\begin{array}{c|c|c|c}
\text{group} & k(J) & \text{weights of generators} & \text{decomposition of $J^2$}  \\ 
\hline 
\Orth_1(2U(4)\oplus A_1) & 5 & \frac{1}{2},\frac{1}{2},\frac{1}{2},\frac{1}{2} & \mathbf{1}\times 10  \\
\hline
\Orth_2(2U(4)\oplus A_1) & 7 & 1,1,1,1 & \mathbf{1}\times 6 + \mathbf{2}\times 2 +\mathbf{4} \\
\hline
\Orth_1(2U(2)\oplus A_1) & 11 & 2,2,2,2 & \mathbf{2}\times 9 + \mathbf{4} \\
\hline
\Orth_{1'}(2U(2)\oplus A_1) & 7 & 1,1,1,1 & \mathbf{1}\times 8 +  \mathbf{2}\times 3 \\
\hline
\Orth_{1''}(2U(2)\oplus A_1) & 10 & 1,2,2,2 & \mathbf{2}\times 8 + \mathbf{4} \\
\hline
\Orth_1(2U(3)\oplus A_1) & 7 & 1,1,1,1 & \mathbf{2}+\mathbf{12} \\
\hline
\Orth_1(U(4)\oplus U(2)\oplus A_1) & 7 & 1,1,1,1 & \mathbf{1}\times 8 + \mathbf{2}\times 3\ \\
\hline
\Orth_{1'}(U(4)\oplus U(2)\oplus A_1) & 5 & \frac{1}{2},\frac{1}{2},\frac{1}{2},\frac{1}{2} & \mathbf{1}\times 10\\
\hline
\Orth_2(U\oplus U(2)\oplus A_1(2)) & 11 & 2,2,2,2 & \mathbf{2}\times 7 + \mathbf{8} \\
\hline
\Orth_{2'}(U\oplus U(2)\oplus A_1(2)) & 10 & 1,2,2,2 & \mathbf{2}\times 6 + \mathbf{8} \\
\hline
\Orth_1(2U(2)\oplus 2A_1) & 14 & 2,2,2,2,2 & \mathbf{2}\times 10 + \mathbf{8} \\
\hline
\Orth_{1'}(2U(2)\oplus 2A_1) & 9 & 1,1,1,1,1 & \mathbf{2}\times 5 + \mathbf{8} \\
\hline
\Orth_{1''}(2U(2)\oplus 2A_1) & 13 & 1,2,2,2,2 & \mathbf{2}\times 9 + \mathbf{8} \\
\hline
\Orth_1(2U(3)\oplus A_2) & 9 & 1,1,1,1,1 & \mathbf{18}  \\
\hline
\Orth_{1,1234}(2U(3)\oplus A_2) & 13 & 1,2,2,2,2 & \mathbf{2}\times 4 + \mathbf{18}  \\
\hline
\Orth_{2}(2U(3)\oplus A_2) & 14 & 2,2,2,2,2 & \mathbf{2}\times 5 + \mathbf{18}  \\
\hline
\end{array} 
\]
\end{table}

\bigskip

\noindent
\textbf{Acknowledgements} 
The authors would like to thank Eberhard Freitag and Riccardo Salvati Manni for proposing this topic and for helpful discussions. H. Wang is grateful to Max Planck Institute for Mathematics in Bonn for its hospitality and financial support. B. Williams is supported by a fellowship of the LOEWE research group Uniformized Structures in Algebra and Geometry.
 
\clearpage
\newpage

\section*{Appendix A: Type II products and \texorpdfstring{$*$}{}-sets of type \texorpdfstring{$2U(4)\oplus A_1$}{}}

The sixty type II singular-weight products attached to the lattice $M = 2U(4)\oplus A_1$ have input forms with principal part $q^{-1/4}(e_v + e_{-v})$ for certain cosets $v \in M^{\vee} / M$ of order 4. In the table below, we list one coset representative $v$ for each product $\Theta_i$, with respect to the Gram matrix $$\begin{psmallmatrix} 0 & 0 & 0 & 0 & 4 \\ 0 & 0 & 0 & 4 & 0 \\ 0 & 0 & 2 & 0 & 0 \\ 0 & 4 & 0 & 0 & 0 \\ 4 & 0 & 0 & 0 & 0 \end{psmallmatrix}:$$

\begin{table}[htp]
\begin{tabular}{lllll}
$\Theta_{1}$ & $(1/4, 1/2, 0, 1/2, 1/4)$ & & $\Theta_{2}$ & $(1/4, 0, 0, 0, 1/4)$ \\
$\Theta_{3}$ & $(0, 0, 1/2, 1/4, 1/4)$ & & $\Theta_{4}$ & $(1/2, 1/2, 1/2, 1/4, 1/4)$ \\
$\Theta_{5}$ & $(1/4, 0, 0, 1/4, 1/4)$ & & $\Theta_{6}$ & $(1/4, 1/2, 0, 1/4, 3/4)$ \\
$\Theta_{7}$ & $(1/2, 0, 1/2, 1/4, 0)$ & & $\Theta_{8}$ & $(0, 0, 1/2, 1/4, 1/2)$ \\
$\Theta_{9}$ & $(0, 1/4, 1/2, 0, 1/4)$ & & $\Theta_{10}$ & $(1/2, 1/4, 1/2, 1/2, 3/4)$ \\
$\Theta_{11}$ & $(1/4, 1/2, 1/2, 0, 0)$ & & $\Theta_{12}$ & $(1/4, 0, 1/2, 0, 0)$ \\
$\Theta_{13}$ & $(1/4, 1/2, 1/2, 1/4, 1/2)$ & & $\Theta_{14}$ & $(1/4, 1/2, 1/2, 3/4, 1/2)$ \\
$\Theta_{15}$ & $(1/4, 3/4, 1/2, 0, 0)$ & & $\Theta_{16}$ & $(1/4, 1/4, 1/2, 0, 0)$ \\
$\Theta_{17}$ & $(1/4, 3/4, 0, 1/2, 3/4)$ & & $\Theta_{18}$ & $(1/4, 3/4, 0, 0, 1/4)$ \\
$\Theta_{19}$ & $(0, 1/4, 0, 1/4, 1/4)$ & & $\Theta_{20}$ & $(1/2, 1/4, 0, 3/4, 1/4)$ \\
$\Theta_{21}$ & $(1/4, 3/4, 0, 1/4, 1/2)$ & & $\Theta_{22}$ & $(1/4, 3/4, 0, 3/4, 0)$ \\
$\Theta_{23}$ & $(0, 1/2, 1/2, 1/2, 1/4)$ & & $\Theta_{24}$ & $(0, 0, 1/2, 1/2, 1/4)$ \\
$\Theta_{25}$ & $(1/2, 1/4, 0, 1/4, 0)$ & & $\Theta_{26}$ & $(1/2, 1/4, 0, 1/4, 1/2)$ \\
$\Theta_{27}$ & $(1/2, 1/4, 1/2, 0, 0)$ & & $\Theta_{28}$ & $(1/2, 1/4, 1/2, 0, 1/2)$ \\
$\Theta_{29}$ & $(1/4, 1/4, 1/2, 1/4, 3/4)$ & & $\Theta_{30}$ & $(1/4, 1/4, 1/2, 3/4, 1/4)$ \\
$\Theta_{31}$ & $(1/4, 0, 0, 1/2, 1/4)$ & & $\Theta_{32}$ & $(1/4, 1/2, 0, 0, 1/4)$ \\
$\Theta_{33}$ & $(0, 0, 1/2, 1/4, 3/4)$ & & $\Theta_{34}$ & $(1/2, 1/2, 1/2, 1/4, 3/4)$ \\
$\Theta_{35}$ & $(1/4, 1/2, 0, 3/4, 3/4)$ & & $\Theta_{36}$ & $(1/4, 0, 0, 3/4, 1/4)$ \\
$\Theta_{37}$ & $(1/2, 0, 1/2, 1/4, 1/2)$ & & $\Theta_{38}$ & $(0, 0, 1/2, 1/4, 0)$ \\
$\Theta_{39}$ & $(1/2, 1/4, 1/2, 1/2, 1/4)$ & & $\Theta_{40}$ & $(0, 1/4, 1/2, 0, 3/4)$ \\
$\Theta_{41}$ & $(1/4, 0, 1/2, 1/2, 0)$ & & $\Theta_{42}$ & $(1/4, 1/2, 1/2, 1/2, 0)$ \\
$\Theta_{43}$ & $(1/4, 0, 1/2, 1/4, 0)$ & & $\Theta_{44}$ & $(1/4, 0, 1/2, 3/4, 0)$ \\
$\Theta_{45}$ & $(1/4, 1/4, 1/2, 1/2, 1/2)$ & & $\Theta_{46}$ & $(1/4, 3/4, 1/2, 1/2, 1/2)$ \\
$\Theta_{47}$ & $(1/4, 1/4, 0, 1/2, 3/4)$ & & $\Theta_{48}$ & $(1/4, 1/4, 0, 0, 1/4)$ \\
$\Theta_{49}$ & $(1/2, 1/4, 0, 3/4, 3/4)$ & & $\Theta_{50}$ & $(0, 1/4, 0, 1/4, 3/4)$ \\
$\Theta_{51}$ & $(1/4, 1/4, 0, 3/4, 1/2)$ & & $\Theta_{52}$ & $(1/4, 1/4, 0, 1/4, 0)$ \\
$\Theta_{53}$ & $(0, 1/2, 1/2, 0, 1/4)$ & & $\Theta_{54}$ & $(0, 0, 1/2, 0, 1/4)$ \\
$\Theta_{55}$ & $(0, 1/4, 0, 1/4, 0)$ & & $\Theta_{56}$ & $(0, 1/4, 0, 1/4, 1/2)$ \\
$\Theta_{57}$ & $(0, 1/4, 1/2, 0, 1/2)$ & & $\Theta_{58}$ & $(0, 1/4, 1/2, 0, 0)$ \\
$\Theta_{59}$ & $(1/4, 3/4, 1/2, 1/4, 1/4)$ & & $\Theta_{60}$ & $(1/4, 3/4, 1/2, 3/4, 3/4)$ \\
\end{tabular}\end{table}

\begin{figure}[htp]
\includegraphics[width=0.75\linewidth]{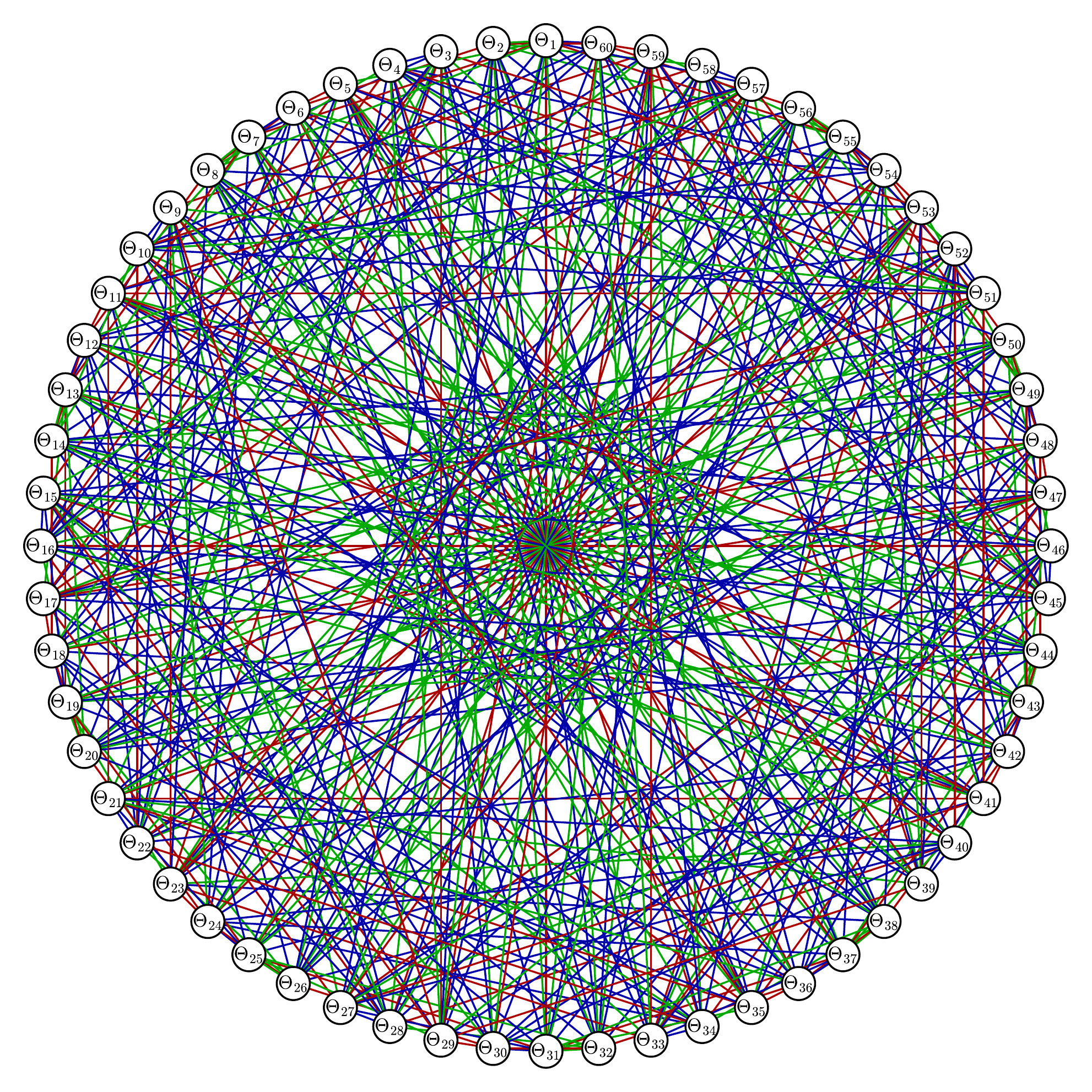}
\caption{There is an edge between the type II products $\Theta_i$ and $\Theta_j$ if $\Theta_i$ is modular under the reflections associated to the divisor of $\Theta_j$. This is a 15-regular graph with 23,040 automorphisms. The edge between $\Theta_i$ and $\Theta_j$ is colored red, green or blue depending on $i+j$ mod $3$.} \label{fig:A1-1}
\end{figure}

The $*$-sets are the maximum cliques in the graph formed by connecting products $\Theta_i$ and $\Theta_j$ by an edge if the reflections associated to the divisor of $\Theta_j$ also preserve the divisor of $\Theta_i$. With respect to the ordering above, the 105 $*$-sets are $\{\Theta_{i_1},\Theta_{i_2}, \Theta_{i_3}, \Theta_{i_4}\}$ where $(i_1,i_2,i_3,i_4)$ is one of the following: 

\begingroup 
\allowdisplaybreaks
\begin{small}
\begin{align*} 
&(1, 2, 26, 55), (3, 17, 18, 34), (6, 19, 36, 49), (10, 14, 40, 44), (13, 19, 20, 44), (17, 27, 47, 57), (23, 24, 53, 54),\\
&(1, 2, 29, 60), (3, 21, 22, 34), (7, 8, 37, 38), (10, 30, 40, 60), (13, 30, 43, 60), (17, 48, 49, 50), (23, 24, 57, 58),\\
&(1, 2, 31, 32), (3, 23, 33, 53), (7, 11, 38, 42), (10, 39, 53, 54), (14, 18, 43, 47), (18, 19, 20, 47), (23, 54, 55, 56),\\
&(1, 5, 32, 36), (4, 8, 34, 38), (7, 13, 14, 38), (10, 39, 57, 58), (14, 29, 44, 59), (18, 28, 48, 58), (24, 25, 26, 53),\\
&(1, 7, 8, 32), (4, 24, 34, 54), (7, 24, 37, 54), (11, 12, 28, 57), (14, 43, 49, 50), (19, 20, 49, 50), (25, 26, 55, 56),\\
&(1, 18, 31, 48), (4, 33, 47, 48), (8, 12, 37, 41), (11, 12, 41, 42), (15, 16, 28, 57), (19, 23, 50, 54), (25, 29, 56, 60),\\
&(1, 27, 31, 57), (4, 33, 51, 52), (8, 23, 38, 53), (11, 12, 45, 46), (15, 16, 41, 42), (19, 25, 26, 50), (25, 31, 32, 56),\\
&(2, 6, 31, 35), (5, 6, 22, 51), (8, 37, 43, 44), (11, 21, 41, 51), (15, 16, 45, 46), (20, 24, 49, 53), (26, 30, 55, 59),\\
&(2, 17, 32, 47), (5, 6, 35, 36), (9, 10, 22, 51), (11, 26, 41, 56), (15, 19, 45, 49), (20, 49, 55, 56), (27, 28, 53, 54),\\
&(2, 28, 32, 58), (5, 6, 39, 40), (9, 10, 35, 36), (11, 42, 43, 44), (15, 29, 30, 46), (21, 22, 47, 48), (27, 28, 57, 58),\\
&(2, 31, 37, 38), (5, 15, 35, 45), (9, 10, 39, 40), (12, 13, 14, 41), (15, 33, 34, 46), (21, 22, 51, 52), (27, 41, 42, 58),\\
&(3, 4, 16, 45), (5, 20, 35, 50), (9, 13, 39, 43), (12, 22, 42, 52), (16, 20, 46, 50), (21, 25, 51, 55), (27, 45, 46, 58),\\
&(3, 4, 29, 30), (5, 36, 37, 38), (9, 23, 24, 40), (12, 25, 42, 55), (16, 45, 59, 60), (21, 35, 36, 52), (29, 30, 59, 60),\\
&(3, 4, 33, 34), (6, 7, 8, 35), (9, 27, 28, 40), (13, 14, 43, 44), (17, 18, 47, 48), (21, 39, 40, 52), (30, 31, 32, 59),\\
&(3, 7, 33, 37), (6, 16, 36, 46), (9, 29, 39, 59), (13, 17, 44, 48), (17, 18, 51, 52), (22, 26, 52, 56), (33, 34, 59, 60).
\end{align*}
\end{small}
\endgroup

The ordering is chosen such that the 90 exceptional pairs of type II forms that extend in three ways to $*$-sets are precisely those of the form $(\Theta_{i+j}, \Theta_{i+k})$ where $i < 30$ is odd and $j, k \in \{0, 1, 30, 31\}$ are distinct.

\begin{figure}[htp]
\includegraphics[width=0.75\linewidth]{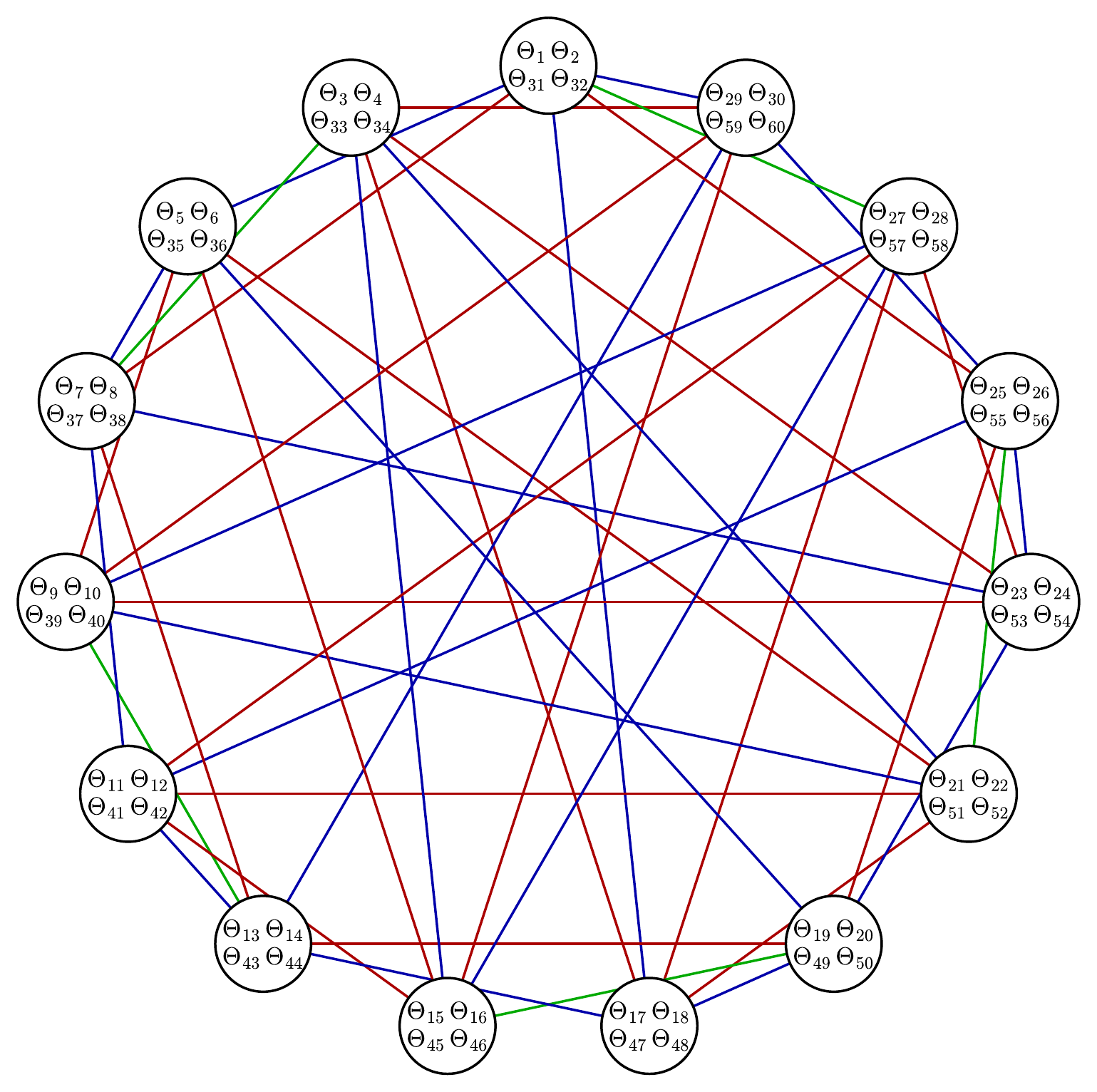}
\caption{Contracting the 90 exceptional pairs in Figure 1 yields the strongly regular graph $\mathrm{srg}(15, 6, 1, 3)$ on 15 vertices. The edges are colored red, green, blue by the same rule as Figure \ref{fig:A1-1}.}
\end{figure}

\newpage

\section*{Appendix B: Singular-weight products and \texorpdfstring{$*$}{}-sets of type \texorpdfstring{$2U(3)\oplus A_2$}{}}

The $45$ singular-weight products attached to the lattice $M = 2U(3)\oplus  A_2$ have input forms with principal part $q^{-1/3}(e_v + e_{-v})$ for cosets $v \in M^{\vee} / M$ of order $3$. In the table below, for each product $G_i$ we list one coset representative $v$ with respect to the Gram matrix $$\begin{psmallmatrix} 0 & 0 & 0 & 0 & 0 & 3 \\ 0 & 0 & 0 & 0 & 3 & 0 \\ 0 & 0 & 2 & 1 & 0 & 0 \\ 0 & 0 & 1 & 2 & 0 & 0 \\ 0 & 3 & 0 & 0 & 0 & 0 \\ 3 & 0 & 0 & 0 & 0 & 0 \end{psmallmatrix}:$$

\begin{table}[htp]
\begin{tabular}{lllll}
$G_{1}$ & $(0, 0, 2/3, 2/3, 0, 1/3)$ & & $G_{2}$ & $(0, 0, 2/3, 2/3, 1/3, 1/3)$ \\
$G_{3}$ & $(0, 1/3, 2/3, 2/3, 0, 0)$ & & $G_{4}$ & $(1/3, 2/3, 1/3, 1/3, 1/3, 1/3)$ \\
$G_{5}$ & $(0, 1/3, 1/3, 1/3, 0, 1/3)$ & & $G_{6}$ & $(0, 0, 2/3, 2/3, 2/3, 1/3)$ \\
$G_{7}$ & $(0, 1/3, 1/3, 1/3, 0, 0)$ & & $G_{8}$ & $(1/3, 0, 1/3, 1/3, 2/3, 0)$ \\
$G_{9}$ & $(1/3, 1/3, 1/3, 1/3, 0, 0)$ & & $G_{10}$ & $(1/3, 1/3, 0, 0, 0, 1/3)$ \\
$G_{11}$ & $(0, 0, 2/3, 2/3, 0, 2/3)$ & & $G_{12}$ & $(1/3, 2/3, 1/3, 1/3, 2/3, 2/3)$ \\
$G_{13}$ & $(1/3, 2/3, 0, 0, 1/3, 2/3)$ & & $G_{14}$ & $(1/3, 0, 2/3, 2/3, 1/3, 0)$ \\
$G_{15}$ & $(1/3, 1/3, 2/3, 2/3, 0, 0)$ & & $G_{16}$ & $(0, 0, 2/3, 2/3, 2/3, 2/3)$ \\
$G_{17}$ & $(1/3, 1/3, 0, 0, 1/3, 0)$ & & $G_{18}$ & $(1/3, 2/3, 1/3, 1/3, 0, 0)$ \\
$G_{19}$ & $(0, 1/3, 1/3, 1/3, 0, 2/3)$ & & $G_{20}$ & $(0, 1/3, 2/3, 2/3, 0, 2/3)$ \\
$G_{21}$ & $(0, 0, 2/3, 2/3, 1/3, 0)$ & & $G_{22}$ & $(0, 1/3, 0, 0, 1/3, 1/3)$ \\
$G_{23}$ & $(0, 1/3, 0, 0, 1/3, 2/3)$ & & $G_{24}$ & $(0, 0, 2/3, 2/3, 0, 0)$ \\
$G_{25}$ & $(1/3, 2/3, 0, 0, 0, 1/3)$ & & $G_{26}$ & $(0, 1/3, 2/3, 2/3, 0, 1/3)$ \\
$G_{27}$ & $(1/3, 2/3, 2/3, 2/3, 2/3, 2/3)$ & & $G_{28}$ & $(1/3, 0, 0, 0, 2/3, 1/3)$ \\
$G_{29}$ & $(1/3, 1/3, 1/3, 1/3, 1/3, 2/3)$ & & $G_{30}$ & $(1/3, 0, 0, 0, 0, 1/3)$ \\
$G_{31}$ & $(0, 0, 2/3, 2/3, 1/3, 2/3)$ & & $G_{32}$ & $(1/3, 0, 0, 0, 1/3, 1/3)$ \\
$G_{33}$ & $(1/3, 2/3, 2/3, 2/3, 0, 0)$ & & $G_{34}$ & $(1/3, 0, 1/3, 1/3, 1/3, 0)$ \\
$G_{35}$ & $(0, 1/3, 0, 0, 1/3, 0)$ & & $G_{36}$ & $(1/3, 2/3, 0, 0, 2/3, 0)$ \\
$G_{37}$ & $(1/3, 1/3, 0, 0, 2/3, 2/3)$ & & $G_{38}$ & $(1/3, 0, 1/3, 1/3, 0, 0)$ \\
$G_{39}$ & $(1/3, 1/3, 2/3, 2/3, 1/3, 2/3)$ & & $G_{40}$ & $(1/3, 0, 2/3, 2/3, 0, 0)$ \\
$G_{41}$ & $(0, 0, 2/3, 2/3, 2/3, 0)$ & & $G_{42}$ & $(1/3, 2/3, 2/3, 2/3, 1/3, 1/3)$ \\
$G_{43}$ & $(1/3, 1/3, 2/3, 2/3, 2/3, 1/3)$ & & $G_{44}$ & $(1/3, 0, 2/3, 2/3, 2/3, 0)$ \\
$G_{45}$ & $(1/3, 1/3, 1/3, 1/3, 2/3, 1/3)$
\end{tabular}\end{table}

With respect to this ordering the 27 $*$-sets are the sets $\{G_i, \, i \in I\}$ where $I$ is one of the index sets:

\begin{align*}
&(1, 14, 15, 23, 27), (1, 22, 33, 39, 44), (1, 35, 40, 42, 43), (2, 3, 14, 25, 45), (2, 9, 13, 20, 44), \\ 
& (2, 26, 29, 36, 40), (3, 8, 10, 31, 42), (3, 12, 21, 30, 39), (4, 6, 7, 10, 44), (4, 11, 35, 38, 45), \\
&(4, 15, 21, 26, 28), (5, 6, 14, 18, 37), (5, 9, 28, 41, 42), (5, 16, 36, 38, 39), (6, 12, 17, 19, 40),\\
& (7, 16, 25, 34, 43), (7, 27, 29, 30, 41), (8, 11, 18, 22, 29), (8, 13, 15, 16, 19), (9, 11, 12, 23, 34), \\
&(10, 23, 24, 32, 36), (13, 24, 30, 35, 37), (17, 20, 27, 31, 38), (17, 22, 24, 25, 28), (18, 20, 21, 32, 43),\\
& (19, 32, 33, 41, 45), (26, 31, 33, 34, 37).
\end{align*} 
These can be computed as the maximal cliques in the graph depicted in Figure \ref{fig:A2}.

\clearpage
\begin{figure}[htp]
\includegraphics[width=0.75\linewidth]{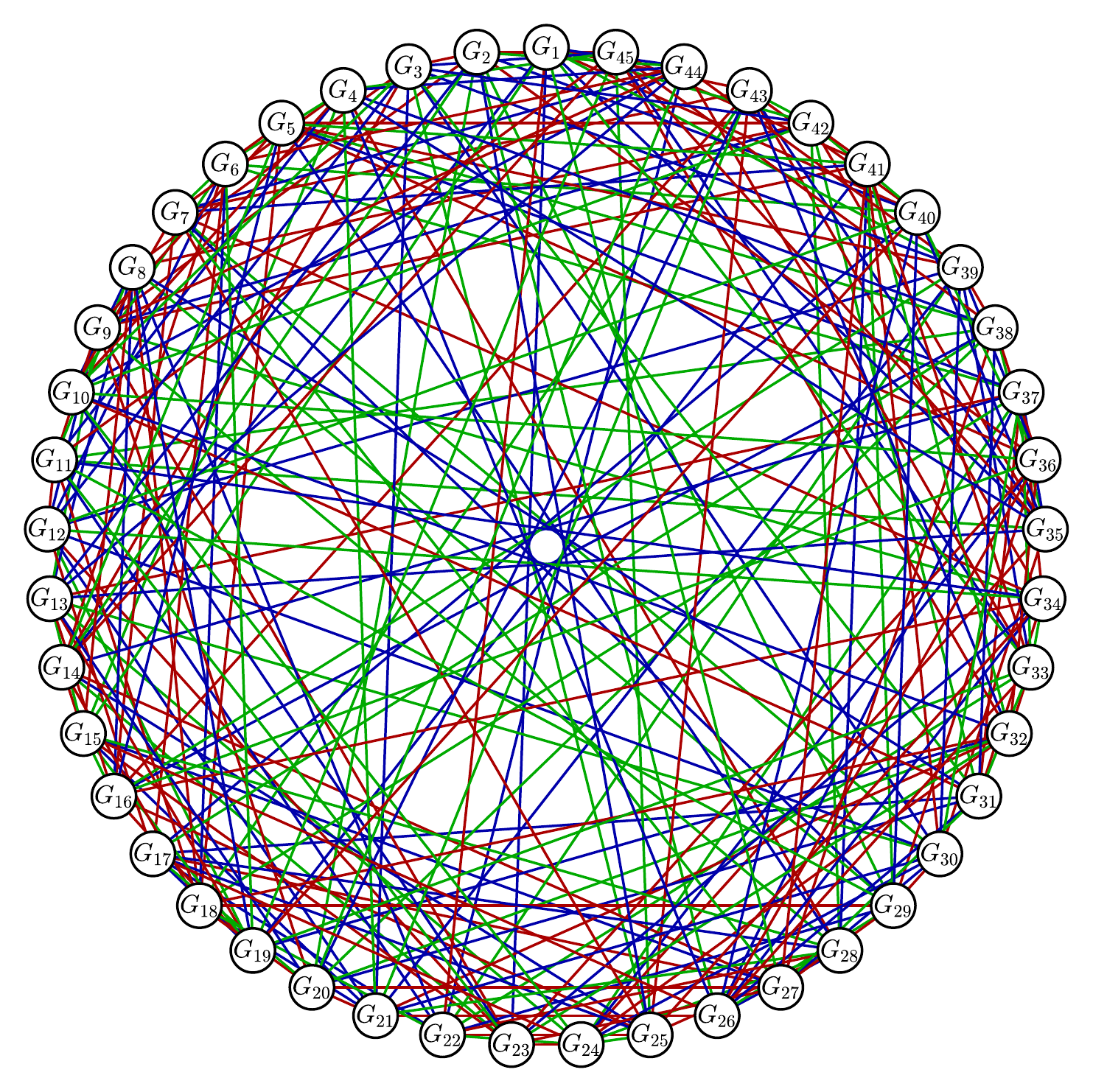}
\caption{There is an edge between the singular-weight products $G_i$ and $G_j$ if $G_i$ is modular under the reflections associated to the divisor of $G_j$. This is a strongly regular graph with parameters $(45, 12, 3, 3)$ and has 51,840 automorphisms.} \label{fig:A2}
\end{figure}

\bibliographystyle{plainnat}
\bibliofont
\bibliography{bibtex}

\begin{thebibliography}{22}
\providecommand{\natexlab}[1]{#1}
\providecommand{\url}[1]{\texttt{#1}}
\expandafter\ifx\csname urlstyle\endcsname\relax
  \providecommand{\doi}[1]{doi: #1}\else
  \providecommand{\doi}{doi: \begingroup \urlstyle{rm}\Url}\fi

\bibitem[Aoki and Ibukiyama(2005)]{AI05}
Hiroki Aoki and Tomoyoshi Ibukiyama.
\newblock Simple graded rings of {S}iegel modular forms, differential operators
  and {B}orcherds products.
\newblock \emph{Internat. J. Math.}, 16\penalty0 (3):\penalty0 249--279, 2005.

\bibitem[Baily and Borel(1966)]{BB66}
Walter Baily and Armand Borel.
\newblock Compactification of arithmetic quotients of bounded symmetric
  domains.
\newblock \emph{Ann. of Math. (2)}, 84:\penalty0 442--528, 1966.

\bibitem[Borcherds(1998)]{Bor98}
Richard Borcherds.
\newblock {Automorphic forms with singularities on Grassmannians}.
\newblock \emph{Invent. Math.}, 132\penalty0 (3):\penalty0 491--562, 1998.

\bibitem[Bruinier(2002)]{Bru02}
Jan~Hendrik Bruinier.
\newblock \emph{Borcherds products on {O}(2, {$l$}) and {C}hern classes of
  {H}eegner divisors}, volume 1780 of \emph{Lecture Notes in Mathematics}.
\newblock Springer-Verlag, Berlin, 2002.

\bibitem[Dern and Krieg(2003)]{DK03}
Tobias Dern and Aloys Krieg.
\newblock Graded rings of {H}ermitian modular forms of degree 2.
\newblock \emph{Manuscripta Math.}, 110\penalty0 (2):\penalty0 251--272, 2003.

\bibitem[Freitag(1967)]{Fre67}
Eberhard Freitag.
\newblock Modulformen zweiten {G}rades zum rationalen und {G}au\ss schen
  {Z}ahlk\"{o}rper.
\newblock \emph{S.-B. Heidelberger Akad. Wiss. Math.-Natur. Kl.},
  1967:\penalty0 3--49, 1967.

\bibitem[Freitag and Salvati~Manni(2006)]{FM06}
Eberhard Freitag and Riccardo Salvati~Manni.
\newblock Hermitian modular forms and the {B}urkhardt quartic.
\newblock \emph{Manuscripta Math.}, 119\penalty0 (1):\penalty0 57--59, 2006.

\bibitem[Gritsenko and Nikulin(2018)]{GN18}
Valery Gritsenko and Viacheslav Nikulin.
\newblock Lorentzian {K}ac-{M}oody algebras with {W}eyl groups of
  2-reflections.
\newblock \emph{Proc. Lond. Math. Soc. (3)}, 116\penalty0 (3):\penalty0
  485--533, 2018.

\bibitem[Gritsenko et~al.(2009)Gritsenko, Hulek, and Sankaran]{GHS2009}
Valery Gritsenko, Klaus Hulek, and Gregory Sankaran.
\newblock Abelianisation of orthogonal groups and the fundamental group of
  modular varieties.
\newblock \emph{J. Algebra}, 322\penalty0 (2):\penalty0 463--478, 2009.

\bibitem[Hermann(1995)]{Her95}
Carl~Friedrich Hermann.
\newblock Some modular varieties related to {${\bf P}^4$}.
\newblock In \emph{Abelian varieties ({E}gloffstein, 1993)}, pages 105--129. de
  Gruyter, Berlin, 1995.

\bibitem[Igusa(1964)]{Igu64}
Jun-ichi Igusa.
\newblock On {S}iegel modular forms genus two. {II}.
\newblock \emph{Amer. J. Math.}, 86:\penalty0 392--412, 1964.

\bibitem[Margulis(1991)]{Mar91}
Grigory Margulis.
\newblock \emph{Discrete subgroups of semisimple {L}ie groups}, volume~17 of
  \emph{Ergebnisse der Mathematik und ihrer Grenzgebiete (3) [Results in
  Mathematics and Related Areas (3)]}.
\newblock Springer-Verlag, Berlin, 1991.

\bibitem[Matsumoto(1993)]{Mat93}
Keiji Matsumoto.
\newblock Theta functions on the bounded symmetric domain of type {$I_{2,2}$}
  and the period map of a {$4$}-parameter family of {$K3$} surfaces.
\newblock \emph{Math. Ann.}, 295\penalty0 (3):\penalty0 383--409, 1993.

\bibitem[Opitz and Schwagenscheidt(2019)]{OS19}
Sebastian Opitz and Markus Schwagenscheidt.
\newblock Holomorphic {B}orcherds products of singular weight for simple
  lattices of arbitrary level.
\newblock \emph{Proc. Amer. Math. Soc.}, 147, 2019.

\bibitem[Perna(2016)]{Per16}
Sara Perna.
\newblock On isomorphisms between {S}iegel modular threefolds.
\newblock \emph{Abh. Math. Semin. Univ. Hambg.}, 86\penalty0 (1):\penalty0
  55--68, 2016.

\bibitem[Popov and Vinberg(1989)]{VP89}
Vladimir Popov and Ernest Vinberg.
\newblock Invariant theory.
\newblock In \emph{Algebraic geometry, 4 ({R}ussian)}, Itogi Nauki i Tekhniki,
  pages 137--314, 315. Akad. Nauk SSSR, Vsesoyuz. Inst. Nauchn. i Tekhn.
  Inform., Moscow, 1989.

\bibitem[Runge(1993)]{Run93}
Bernhard Runge.
\newblock On {S}iegel modular forms. {I}.
\newblock \emph{J. Reine Angew. Math.}, 436:\penalty0 57--85, 1993.

\bibitem[{The Sage Developers}(2020)]{sagemath}
{The Sage Developers}.
\newblock \emph{{S}ageMath, the {S}age {M}athematics {S}oftware {S}ystem
  ({V}ersion 9.1)}, 2020.
\newblock {\tt https://www.sagemath.org}.

\bibitem[Wang(2020{\natexlab{a}})]{Wan20}
Haowu Wang.
\newblock The classification of free algebras of orthogonal modular forms.
\newblock 2020{\natexlab{a}}.
\newblock URL \url{arXiv:2006.02291}.

\bibitem[Wang(2020{\natexlab{b}})]{Wan20b}
Haowu Wang.
\newblock On some free algebras of orthogonal modular forms {II}.
\newblock 2020{\natexlab{b}}.
\newblock URL \url{arXiv:2006.02680}.

\bibitem[Wang and Williams(2020)]{WW20}
Haowu Wang and Brandon Williams.
\newblock On some free algebras of orthogonal modular forms.
\newblock \emph{Adv. Math.}, 373, 2020.
\newblock URL \url{arXiv:2003.05374}.
\newblock To appear.

\bibitem[Williams(2018)]{Wi18}
Brandon Williams.
\newblock Poincar\'{e} square series for the {W}eil representation.
\newblock \emph{Ramanujan J.}, 47\penalty0 (3):\penalty0 605--650, 2018.

\end{thebibliography}

\end{document}